\def\refer#1{~\ref{#1}}
\def\refeq#1{~(\ref{#1})}
\def\ccite#1{~\cite{#1}}
\def\longformule#1#2{
\displaylines{ \qquad{#1} \hfill\cr \hfill {#2} \qquad\cr } }
\def\inte#1{
\displaystyle\mathop{#1\kern0pt}^\circ }
\let\pa=\partial
\let\al=\alpha
\let\b=\beta
\let\d=\delta
\let\r=\rho
\let\s=\sigma
\let\f=\frac
\let\p=\psi
\let\om=\omega
\let\D=\Delta
\let\Om=\Omega
\let\wt=\widetilde
\let\wh=\widehat
\def\cB{{\mathcal B}}
\def\cC{{\mathcal C}}
\def\cE{{\mathcal E}}
\def\cF{{\mathcal F}}
\def\cH{{\mathcal H}}
\def\cL{{\mathcal L}}
\def\cS{{\mathcal S}}
\def\cV{{\mathcal V}}
\def\grad{\nabla}
\def\dH{\dot{H}}
\def\dB{\dot{B}}
\def\virgp{\raise 2pt\hbox{,}}
\def\cdotpv{\raise 2pt\hbox{;}}
\def\eqdefa{\buildrel\hbox{\footnotesize def}\over =}
\def\Id{\mathop{\rm Id}\nolimits}
\def\C{\mathop{\mathbb C\kern 0pt}\nolimits}
\def\DD{\mathop{\mathbb D\kern 0pt}\nolimits}
\def\EE{\mathop{  {\mathbb E \kern 0pt}}\nolimits}
\def\K{\mathop{\mathbb K\kern 0pt}\nolimits}
\def\N{\mathop{\mathbb N\kern 0pt}\nolimits}
\def\Q{\mathop{\mathbb Q\kern 0pt}\nolimits}
\def\R{\mathop{\mathbb R\kern 0pt}\nolimits}
\def\SS{\mathop{\mathbb S\kern 0pt}\nolimits}
\def\ZZ{\mathop{\mathbb Z\kern 0pt}\nolimits}
\def\TT{\mathop{\mathbb T\kern 0pt}\nolimits}
\def\P{\mathop{\mathbb P\kern 0pt}\nolimits}
\newcommand{\ds}{\displaystyle}
\newcommand{\Z}{{\ZZ}}
\def\dv{\mbox{\rm div}}
\def\dive{\mathop{\rm div}\nolimits}
\def\qq{pour tout\ }
\def\Supp{\mathop{\rm Supp}\nolimits\ }
\def\no{\noindent}
\def\na{\nabla}
\def\p{\partial}
\def\th{\theta}
\def\vcurl{v^{\rm h}_{\rm curl}}
\def\nablah{\nabla_{\rm h}}
\def\vdiv{v^{\rm h}_{\rm div}}
\newcommand{\beq}{\begin{equation}}
\newcommand{\eeq}{\end{equation}}
\newcommand{\ben}{\begin{eqnarray}}
\newcommand{\een}{\end{eqnarray}}
\newcommand{\beno}{\begin{eqnarray*}}
\newcommand{\eeno}{\end{eqnarray*}}
\newcommand{\andf}{\quad\hbox{and}\quad}
\newcommand{\with}{\quad\hbox{with}\quad}
\newtheorem{defi}{Definition}[section]
\newtheorem{thm}{Theorem}[section]
\newtheorem{lem}{Lemma}[section]
\newtheorem{prop}{Proposition}[section]
\begin{document}
\title[Regularity criterion for 3-D
 Navier-Stokes system]
{On the critical one component regularity for 3-D
 Navier-Stokes system: general case}
 \author[J.-Y. CHEMIN]{Jean-Yves Chemin}
\address [J.-Y. Chemin]%
{Laboratoire J.-L. Lions, UMR 7598 \\
Universit\'e Pierre et Marie Curie, 75230 Paris Cedex 05, FRANCE }
\email{chemin@ann.jussieu.fr}
\author[P. ZHANG]{Ping Zhang}%
\address[P. Zhang]
 {Academy of
Mathematics $\&$ Systems Science and  Hua Loo-Keng Key Laboratory of
Mathematics, The Chinese Academy of Sciences\\
Beijing 100190, CHINA } \email{zp@amss.ac.cn}
\author[Z. ZHANG]{Zhifei Zhang}\address[Z. ZHANG]
{School of  Mathematical Science, Peking University, Beijing 100871,
P. R. CHINA} \email{zfzhang@math.pku.edu.cn}

\date{2/22/2015}

\begin{abstract} Let us consider an initial data~$v_0$  for the homogeneous incompressible
 3D Navier-Stokes equation with vorticity  belonging to~$L^{\frac 32}\cap L^2$.
  We prove that if the solution associated with~$v_0$ blows up at a finite time~$T^\star$,
  then for any~$p$ in~$]4,\infty[$, and any unit vector~$e$ of~$\R^3$, the~$L^p$ norm in time with
  value in~$\dot{H}^{\frac 12+\frac  2 p }$ of~$(v|e)_{\R^3}$ blows up at~$T^\star$.
\end{abstract}
\maketitle

\noindent {\sl Keywords:}  Incompressible Navier-Stokes Equations,
Blow-up criteria, Anisotropic\\ Littlewood-Paley Theory\

\vskip 0.2cm
\noindent {\sl AMS Subject Classification (2000):} 35Q30, 76D03  \
\setcounter{equation}{0}
\section{Introduction}
In this work,  we investigate  necessary  conditions for  breakdown of regularity of regular solutions to
the following 3-D homogeneous incompressible Navier-Stokes system
\begin{equation*}
(NS)\qquad \left\{\begin{array}{l}
\displaystyle \pa_t v + \dv (v\otimes v) -\D v+\grad \Pi=0, \qquad (t,x)\in\R^+\times\R^3, \\
\displaystyle \dv\, v = 0, \\
\displaystyle  v|_{t=0}=v_0,
\end{array}\right. \label{1.1}
\end{equation*}
where $v=(v^1,v^2, v^3)$ stands for the   velocity of the fluid and
$\Pi$ for the pressure. We shall study   necessary conditions for
blowing up in the framework of Fujita and Kato solutions.
 Let us sum up the fact about this theory introduced in\ccite{fujitakato} by H. Fujita and T. Kato that will be relevant in our work.
\begin{thm}
\label{fujitakato+}
{\sl Let~$v_0$ be in the homogenneous Sobolev
space~$\dot H^{\frac 12}$. There exists a unique  maximal
solution~$v$ in the space~$ C([0,T^\ast[;\dH ^{\frac 1 2   })\cap
L^2_{\rm loc}([0,T^\star[;\dH ^{\frac 32})$. If~$T^\star$ is finite,
then  we have, for any~$p$ in~$[2,\infty[$
\beq
\label{blowupbasic}
\int_0^{T^\star}\|v(t,\cdot)\|_{\dH^{\frac 1 2   +\frac 2 p}}^p\,dt=\infty.
\eeq }
\end{thm}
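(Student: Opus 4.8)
The plan is to obtain local existence, uniqueness and the criterion\refeq{blowupbasic} from the Fujita--Kato fixed point scheme together with an energy estimate at the~$\dH^{\f12}$ level. First I would recast~$(NS)$ as~$v(t)=e^{t\D}v_0+B(v,v)(t)$, where~$B(u,w)(t)\eqdef-\int_0^t e^{(t-s)\D}\,\P\,\dive(u\otimes w)(s)\,ds$, and solve this fixed point equation by a contraction argument in the Banach space~$E_T\eqdef\{v\,:\,\|v\|_{L^4([0,T];\dH^1)}<\infty\}$. Two facts drive it. On the one hand, for~$v_0\in\dH^{\f12}$ the curve~$t\mapsto e^{t\D}v_0$ belongs to~$L^4(\R^+;\dH^1)$, so that~$\|e^{\cdot\D}v_0\|_{L^4([0,T];\dH^1)}\to0$ as~$T\to0^+$; this is the only place where the hypothesis~$v_0\in\dH^{\f12}$, rather than membership in a larger scaling invariant space, enters. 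On the other hand, the product law~$\dH^1\cdot\dH^1\hookrightarrow\dH^{\f12}$ on~$\R^3$ together with the maximal regularity of the heat semigroup gives the~$T$-uniform bilinear bound~$\|B(u,w)\|_{E_T}\le C\|u\|_{E_T}\|w\|_{E_T}$, and in fact also~$B(u,w)\in C([0,T];\dH^{\f12})\cap L^2([0,T];\dH^{\f32})$ with the same control. Hence for~$T$ small there is a unique~$v\in E_T$ solving the equation, which then automatically lies in~$C([0,T];\dH^{\f12})\cap L^2([0,T];\dH^{\f32})$; patching such local solutions produces the maximal one, on some interval~$[0,T^\star[$.

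Assume now~$T^\star<\infty$ and suppose, for a contradiction, that~$M\eqdef\int_0^{T^\star}\|v(t)\|_{\dH^{\f12+\f2p}}^p\,dt<\infty$ for some~$p\in[2,\infty[$. On each~$[0,T]$ with~$T<T^\star$ the regularity recalled above justifies the~$\dH^{\f12}$ energy identity
\beq
\label{enident}
\f12\f d{dt}\|v(t)\|_{\dH^{\f12}}^2+\|v(t)\|_{\dH^{\f32}}^2=-\bigl(\P\,\dive(v\otimes v)(t)\,\big|\,v(t)\bigr)_{\dH^{\f12}},
\eeq
and, by the self-adjointness of~$\P$, the constraint~$\dive v=0$ and an integration by parts, the right-hand side of\refeq{enident} is bounded in absolute value by~$C\|v\otimes v\|_{\dH^{\f12}}\|v\|_{\dH^{\f32}}$. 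Estimating~$\|v\otimes v\|_{\dH^{\f12}}\le C\|v\|_{\dH^{\f12+\f2p}}\|v\|_{\dH^{\f32-\f2p}}$ by the product law on~$\R^3$ (taken, when~$p=2$, in its endpoint form via Bony's decomposition and~$\dH^{\f32}(\R^3)\hookrightarrow\pbes{0}{\infty}{2}$) and interpolating~$\|v\|_{\dH^{\f32-\f2p}}\le\|v\|_{\dH^{\f12}}^{\f2p}\|v\|_{\dH^{\f32}}^{1-\f2p}$, I reach
\[
\bigl|\bigl(\P\,\dive(v\otimes v)\,\big|\,v\bigr)_{\dH^{\f12}}\bigr|\le C\,\|v\|_{\dH^{\f12+\f2p}}\,\|v\|_{\dH^{\f12}}^{\f2p}\,\|v\|_{\dH^{\f32}}^{2-\f2p}.
\]
When~$p>2$, Young's inequality absorbs~$\f12\|v\|_{\dH^{\f32}}^2$ into the left-hand side of\refeq{enident}, which becomes~$\f d{dt}\|v\|_{\dH^{\f12}}^2+\|v\|_{\dH^{\f32}}^2\le C\,\|v(t)\|_{\dH^{\f12+\f2p}}^p\,\|v(t)\|_{\dH^{\f12}}^2$; Gr\"onwall's lemma then gives~$\sup_{t<T^\star}\|v(t)\|_{\dH^{\f12}}\le\|v_0\|_{\dH^{\f12}}\,e^{CM}$, and a further integration yields~$v\in L^2([0,T^\star[;\dH^{\f32})$. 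When~$p=2$ the bound is~$C\,\|v\|_{\dH^{\f32}}^2\|v\|_{\dH^{\f12}}$, and I would conclude directly, using~$\f d{dt}\sqrt{1+\|v\|_{\dH^{\f12}}^2}\le C\|v\|_{\dH^{\f32}}^2$ together with~$\|v\|_{\dH^{\f32}}^2\in L^1(0,T^\star)$. In all cases~$v$ is bounded in~$L^\infty([0,T^\star[;\dH^{\f12})\cap L^2([0,T^\star[;\dH^{\f32})$, hence, by interpolation, in~$L^4([0,T^\star[;\dH^1)$.

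To derive the contradiction I would restart the scheme near~$T^\star$. Since~$v\in L^4([0,T^\star[;\dH^1)$, I can fix~$t_0<T^\star$ so that~$\eta\eqdef\|v\|_{L^4([t_0,T^\star[;\dH^1)}$ is as small as I wish. Reading the Duhamel formula from the time~$t_0$ gives~$e^{s\D}v(t_0)=v(t_0+s)+\int_0^s e^{(s-\sigma)\D}\P\,\dive(v\otimes v)(t_0+\sigma)\,d\sigma$, so the bilinear bound of the first step forces~$\|e^{\cdot\D}v(t_0)\|_{L^4([0,T];\dH^1)}\le\eta+C\eta^2$ for every~$T\le T^\star-t_0$. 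Choosing~$\eta$ small enough that~$\eta+C\eta^2$ falls below the smallness threshold of the contraction argument, and using that~$T\mapsto\|e^{\cdot\D}v(t_0)\|_{L^4([0,T];\dH^1)}$ is continuous on~$\R^+$ (because~$v(t_0)\in\dH^{\f12}$), I infer that the fixed point built from the datum~$v(t_0)$ exists on a time interval of length~$T^\star-t_0+\rho$ for some~$\rho>0$, i.e. on~$[t_0,T^\star+\rho]$; by uniqueness it coincides with~$v$ on~$[t_0,T^\star[$, hence it extends~$v$ strictly beyond~$T^\star$, contradicting maximality. This proves\refeq{blowupbasic}.

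The two product estimates on~$\R^3$ are the places I expect to need the most care: the bilinear bound on~$B$, which rests on the heat smoothing together with Sobolev embeddings, and the control of~$\|v\otimes v\|_{\dH^{\f12}}$, which is borderline when~$p=2$ and must go through Bony's paraproduct rather than a naive Sobolev product inequality. The remaining ingredients --- interpolation, the Young and Gr\"onwall inequalities, and the continuation argument --- are routine.
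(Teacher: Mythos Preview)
The paper does not supply its own proof of Theorem\refer{fujitakato+}: it is quoted as the classical Fujita--Kato theory, with a reference to\ccite{fujitakato}, and then used as a black box throughout. So there is no argument in the paper to compare yours against; what you have written is a correct reconstruction of the standard proof (fixed point in a Kato-type space, $\dH^{\frac12}$ energy identity, Gronwall, and continuation), and it would serve perfectly well as a proof of the quoted statement.

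One small caveat at the endpoint~$p=2$: the product estimate~$\|v\otimes v\|_{\dH^{\frac12}}\lesssim\|v\|_{\dH^{\frac32}}\|v\|_{\dH^{\frac12}}$ fails as stated, because~$\dH^{\frac32}(\R^3)$ does not embed into~$L^\infty$ and the paraproduct~$T_vv$ (low frequencies of the~$\dH^{\frac32}$ factor acting on the~$\dH^{\frac12}$ one) is not controlled this way. You can sidestep this cleanly by bounding the trilinear term directly rather than going through~$\|v\otimes v\|_{\dH^{\frac12}}$: write
\[
\bigl|(v\cdot\nabla v\,|\,v)_{\dH^{\frac12}}\bigr|\le\|v\cdot\nabla v\|_{L^2}\|v\|_{\dH^1}\lesssim\|v\|_{L^6}\|\nabla v\|_{L^3}\|v\|_{\dH^1}\lesssim\|v\|_{\dH^1}^2\|v\|_{\dH^{\frac32}}\le\|v\|_{\dH^{\frac12}}\|v\|_{\dH^{\frac32}}^2,
\]
which is exactly the inequality you then use. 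With this adjustment the~$p=2$ case goes through as you describe, and the rest of your argument is sound.
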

The limiting  case when~$p= \infty$ namely that fact that if there is blow up in finite time~$T^\star$, then~$\ds \limsup_{t\rightarrow T^\star}
\|v(t)\|_{\dot H^{\frac 12}}$ is infinite is a consequence of the work\ccite{ISS}  of L. Escauriaza, G. Seregin and V. \u{S}ver\'{a}k.

In all that follows, we consider initial data~$v_0$ with vorticity
~$\Omega_0\eqdefa \na\times v_0$  belonging to~$ L^{\frac 32}.$ Let
us mention that dual Sobolev embedding implies that~$L^{\frac 32}
\hookrightarrow  \dot H^{-\frac 12}$ which together with Biot-Savart
law ensures that~$v_0$ belongs to~$\dH^{\frac 12}$. Let us introduce
the following family of spaces.
\begin{defi}
\label{definVorticiyspaces} {\sl For~$r$ in~$\bigl]\frac 32,
2\bigr]$, we denote by~$\cV^r$ the space of divergence free vector
fields with the vorticity of which belongs to~$L^{\frac 32}\cap
L^r$. }
\end{defi}
Let us remark that, if we denote \beq\label{nal} \al(r)\eqdefa
\frac1r-\f12\,\virgp \eeq the dual Sobolev
embedding~$L^r\hookrightarrow \dot H^{-3\al(r)}$  implies that the
vector field~$\cV^r$ is included into~$\dot  H^{\frac 12}\cap  \dot
H^{1-3\al(r)}$.

The purpose of this work is to generalize  the following result
proved by the first two authors in~\ccite{CZ5}.

\begin{thm}
\label{existenceomegaL3/2} {\sl Let us consider an initial data
$v_0$ in~$\cV^{\frac 32}$,  let us consider the  unique maximal
solution~$v$ associated with~$v_0$ given by
Theorem\refer{fujitakato+}.
 If its lifespan ~$T^\star$ is finite,  then  we have, for any~$p$ in~$]4,6[$ and any unit vector~$e$ in~$\R^3$,
$$
\int_0^{T^\star}\|(v(t)|e)_{\R^3}\|_{\dH^{\frac 1 2   +\frac 2 p }}^p\,dt=\infty.
$$
}
\end{thm}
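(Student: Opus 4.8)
\medskip

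\noindent\textbf{Proof proposal.} The plan is to argue by contradiction. By rotation invariance of all the norms involved one may take~$e=e_3$; assume~$T^\star<\infty$ while, for some~$p\in\,]4,6[$,
\beq\label{PP-hyp}
\cN\eqdefa\int_0^{T^\star}\|v^3(t)\|_{\dH^{\frac12+\frac2p}}^p\,dt<\infty .
\eeq
By the~$p=2$ instance of the blow-up criterion~\eqref{blowupbasic} of Theorem~\ref{fujitakato+}, it suffices to prove~$\int_0^{T^\star}\|v(t)\|_{\dH^{\frac32}}^2\,dt<\infty$, which contradicts the finiteness of~$T^\star$. Note that~\eqref{PP-hyp} and Sobolev embedding already give~$v^3\in L^p([0,T^\star];L^{\frac{3p}{p-2}}(\R^3))$, a scaling-critical one-component Prodi--Serrin bound; the point is to upgrade it to a bound on~$v$ itself by using the \emph{subcritical} information that the vorticity of~$v_0$ lies in~$L^{\frac32}$ (since~$v_0\in\cV^{\frac32}$).

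\smallskip

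\noindent\textbf{Step 1: reduction to the vertical vorticity.} Write~$v=(v^{\rm h},v^3)$ with~$v^{\rm h}=(v^1,v^2)$, and set~$\om\eqdefa\pa_1v^2-\pa_2v^1$. As~$\dv v=0$, the horizontal Helmholtz decomposition gives~$v^{\rm h}=\vcurl+\vdiv$ with
\beq\label{PP-helm}
\vcurl\eqdefa\nablah^\perp\D_{\rm h}^{-1}\om,\qquad \vdiv\eqdefa-\nablah\D_{\rm h}^{-1}\pa_3v^3,\qquad \nablah^\perp\eqdefa(-\pa_2,\pa_1) ,
\eeq
so that the divergence-free part~$\vcurl$ encodes~$\om$ while the gradient part~$\vdiv$ is a zero-order operator applied to~$\pa_3v^3$, hence controlled by~\eqref{PP-hyp}. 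Since a divergence-free field is determined by~$(\om,v^3)$, the whole task reduces to estimating~$\om$. From the vorticity equation~$\pa_t\Omega+v\cdot\na\Omega-\D\Omega=\Omega\cdot\na v$ and the identity~$\pa_3v^3=-\dv_{\rm h}v^{\rm h}$ one derives
\beq\label{PP-omega}
\pa_t\om+v\cdot\na\om-\D\om=-\om\,\dv_{\rm h}\vdiv+\pa_3v^1\,\pa_2v^3-\pa_3v^2\,\pa_1v^3 ,
\eeq
and writing~$\pa_3v^{\rm h}=\pa_3\vcurl+\pa_3\vdiv$ one sees that every right-hand term is linear either in~$\om$ (possibly through a nonlocal, anisotropic operator) or in~$v^3$.

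\smallskip

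\noindent\textbf{Step 2: the critical~$L^{\frac32}$ estimate for~$\om$, and the conclusion.} The space~$L^{\frac32}$ is scaling-critical for~\eqref{PP-omega}, and~$\om(0)\in L^{\frac32}$. Testing~\eqref{PP-omega} against~$|\om|^{-\frac12}\om$ kills the transport term and produces, up to fixed constants,
\beq\label{PP-en}
\f{d}{dt}\|\om(t)\|_{L^{3/2}}^{3/2}+c\bigl\|\,|\om|^{3/4}(t)\bigr\|_{\dH^1}^2\le\int_{\R^3}|\om|^{\frac12}\bigl(|\om|\,|\nablah\vdiv|+|\pa_3v^{\rm h}|\,|\nablah v^3|\bigr)\,dx .
\eeq
The pieces built from~$v^3$ alone (via~$\vdiv$ and~$\nablah v^3$) should be controlled by H\"older, Sobolev and Gagliardo--Nirenberg and absorbed using~\eqref{PP-hyp}; the genuine vortex-stretching piece~$|\om|^{\frac12}|\pa_3\vcurl|\,|\nablah v^3|$, quadratic in~$\om$ and carrying a derivative on it, has to be split so that its top-order factor is absorbed by the dissipation~$\||\om|^{3/4}\|_{\dH^1}^2$ while the rest feeds a Gr\"onwall weight equal to a power of~$\|v^3\|_{\dH^{\frac12+\frac2p}}$ whose time-exponent is exactly~$p$. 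Closing this Gr\"onwall inequality would give
\beq\label{PP-obd}
\|\om\|_{L^\infty([0,T^\star];L^{3/2})}+\bigl\|\,|\om|^{3/4}\bigr\|_{L^2([0,T^\star];\dH^1)}^{4/3}\le\|\om(0)\|_{L^{3/2}}\,e^{C\cN}<\infty .
\eeq
From~\eqref{PP-helm}, \eqref{PP-obd} and (anisotropic) Sobolev inequalities~$v^{\rm h}$ would then obey a scaling-critical bound; inserting it, together with~\eqref{PP-hyp}, into the~$\dH^{\frac12}$ energy estimate for~$v$ (splitting the nonlinear term~$v\cdot\na v$ as~$v^{\rm h}\cdot\nablah v$, handled by~\eqref{PP-obd}, and~$v^3\pa_3v$, handled by~\eqref{PP-hyp}) would close the estimate and yield~$\int_0^{T^\star}\|v\|_{\dH^{\frac32}}^2\,dt<\infty$, the contradiction sought.

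\smallskip

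\noindent\textbf{Expected main obstacle.} The crux is Step 2, and specifically the vortex-stretching term~$\pa_3\vcurl\cdot\nablah v^3$ in~\eqref{PP-en}. Here~$\pa_3\vcurl=\pa_3\nablah^\perp\D_{\rm h}^{-1}\om$ is \emph{not} a zero-order operator applied to~$\om$: it pairs a vertical derivative against a horizontal inverse gradient, so one cannot simply bound it by~$|\om|$, and the derivative it carries must be shared, with exactly matching Lebesgue and Sobolev exponents, between the parabolic gain~$\||\om|^{3/4}\|_{\dH^1}$ and the scaling-critical quantity~$\|v^3\|_{\dH^{\frac12+\frac2p}}$ so that the resulting Gr\"onwall weight is~$L^1$ in time by~\eqref{PP-hyp}. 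The admissible H\"older--Sobolev balances in the~$L^{\frac32}$-based setting force~$p$ into the window~$]4,6[$: the lower endpoint~$p=4$ (that is~$s\eqdefa\f12+\f2p=1$) is a genuine endpoint the isotropic argument cannot reach, while the upper endpoint~$p=6$ is exactly what gets removed in the general case by replacing these isotropic product laws with sharper anisotropic Littlewood--Paley estimates. One must in addition check that~$\vdiv$ inherits its bounds from~$v^3$ uniformly in time and that no pressure contribution in the concluding~$\dH^{\frac12}$ estimate spoils the argument.
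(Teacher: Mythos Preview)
Your overall architecture (reduce to~$e=e_3$, reformulate via~$\om$ and~$\partial_3v^3$, run an~$L^{3/2}$ energy estimate on~$\om$) matches the paper, but there is a genuine gap in Step~2. You write that ``the pieces built from~$v^3$ alone (via~$\vdiv$ and~$\nablah v^3$) should be controlled \dots\ using~\eqref{PP-hyp}''. This is not true for the piece coming from~$\partial_3\vdiv$: by~\eqref{PP-helm} one has~$\partial_3\vdiv=-\nablah\D_{\rm h}^{-1}\partial_3^2v^3$, which carries \emph{two} vertical derivatives of~$v^3$ against a single horizontal antiderivative. The hypothesis~\eqref{PP-hyp} controls~$v^3$ only in~$L^p_t(\dH^{\frac12+\frac2p})$ and gives no bound whatsoever on~$\partial_3^2v^3$ in any space with nonnegative index; no isotropic H\"older/Sobolev/Gagliardo--Nirenberg manipulation can manufacture this control. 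Consequently the forcing term~$F_3=\int|\om|^{1/2}|\partial_3\vdiv||\nablah v^3|$ cannot be closed in your scheme, and~\eqref{PP-obd} does not follow. (Incidentally, the term you flag as the ``expected main obstacle'', namely~$\partial_3\vcurl\cdot\nablah v^3$, is the \emph{easier} one: it is linear in~$\om$ with one derivative and does absorb into the dissipation plus a Gr\"onwall weight in~$\|v^3\|_{\dH^{\frac12+\frac2p}}^p$.)

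The paper's remedy is precisely to treat~$\partial_3v^3$ as a second unknown on equal footing with~$\om$: one writes the evolution equation for~$\partial_3v^3$ (the second line of~$(\wt{NS})$) and performs an energy estimate in the anisotropic Hilbert space~$\cH^{\theta}\eqdefa\dH^{\theta,-\theta}$ (so that~$\|\partial_3^2v^3\|_{L^2_t(\cH^\theta)}$ is the quantity actually produced). The only quadratic-in-$\om$ term in that equation,~$-\partial_3^2\D^{-1}\!\sum_{\ell,m\le2}\partial_\ell v^m_{\rm curl}\partial_m v^\ell_{\rm curl}$, becomes harmless after pairing with~$\partial_3v^3$ in~$\cH^\theta$ since a factor~$v^3$ reappears. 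One thus obtains a \emph{coupled} pair of inequalities (Propositions~\ref{inegfondvroticity2D3D} and~\ref{estimadivhaniso} with~$r=\frac32$), closed by a nonstandard Gr\"onwall argument. Your single-equation closure~\eqref{PP-obd} is not available; you must introduce and estimate~$\partial_3v^3$ separately. The final contradiction is then obtained not via the~$\dH^{1/2}$ energy, but via an anisotropic endpoint blow-up criterion in~$\dB^{-2+\frac2q}_{\infty,\infty}$ applied to each~$\partial_\ell v^k$.
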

 We refer to\ccite{CZ5} for a detailed introduction about the history of the results involving such ``anisotropic" norm for the description of blow up. The purpose of the present work is to drop the restriction on~$p$ supposing that the initial data is more regular. Namely, we prove the following theorem.
\begin{thm}
\label{thmain-1} {\sl Let us consider an initial data $v_0$
in~$\cV^{2}$,  let us consider the  unique maximal solutio  n~$v$
associated with~$v_0$ given by Theorem\refer{fujitakato+}.
 If its lifespan ~$T^\star$ is finite,  then  we have, for any~$p$ in~$]4,\infty[$ and any unit vector~$e$ in~$\R^3$,
$$
\int_0^{T^\star}\|(v(t)|e)_{\R^3}\|_{\dH^{\frac 1 2   +\frac 2 p }}^p\,dt=\infty.
$$
}
\end{thm}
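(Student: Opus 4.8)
The plan is to argue by contradiction: assume $T^\star<\infty$ but that for some $p\in\,]4,\infty[$ and some unit vector $e$ — which after a rotation we may take to be $e_3$ — the quantity $\int_0^{T^\star}\|v^3(t)\|_{\dH^{\frac12+\frac2p}}^p\,dt$ is finite. The goal is then to propagate enough regularity up to $T^\star$ to contradict the blow-up criterion \refeq{blowupbasic} of Theorem\refer{fujitakato+} (for a suitable exponent), or equivalently the $L^\infty_t\dH^{\frac12}$ criterion of\ccite{ISS}. The overall scheme follows\ccite{CZ5}, but the improved hypothesis $v_0\in\cV^2$ (rather than only $\cV^{3/2}$) is what should let us reach all $p>4$ instead of only $p\in\,]4,6[$. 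The starting point is that $v_0\in\cV^2$ gives $\Omega_0\in L^{3/2}\cap L^2$, hence by Biot--Savart and dual Sobolev embedding $v_0\in\dH^{\frac12}\cap\dH^{1-3\al(2)}=\dH^{\frac12}\cap L^\infty\cap\dH^1$-type spaces; in particular $v_0\in\dH^1$, so the natural energy space in which to run estimates is $L^\infty_t\dH^1\cap L^2_t\dH^2$ for $v$, i.e. control of $\|\nabla v\|_{L^\infty_t L^2}$.

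The key steps, in order, would be: (i) Recall from\ccite{CZ5} the anisotropic-Littlewood--Paley framework and the interpolation/product estimates that express how the ``bad" component $v^3$ controls the nonlinearity; the divergence-free condition $\p_1v^1+\p_2v^2=-\p_3v^3$ is what ties the horizontal part $v^{\rm h}=(v^1,v^2)$ to $v^3$. (ii) Perform an $\dH^1$ (or, more precisely, an anisotropic $\dH^{0,1}$ / $\dH^{1,0}$ splitting) energy estimate on $(NS)$: differentiate, integrate by parts, and bound the trilinear term $\int (v\cdot\nabla v\cdot\nabla^2 v)$ by isolating the contributions containing $\p_3$ and $v^3$, estimating them via the assumed $L^p_t\dH^{\frac12+\frac2p}_x$ norm of $v^3$ together with the anisotropic Sobolev inequality $\|f\|_{L^\infty_{x_3}L^2_{x_{\rm h}}}\lesssim\|f\|_{L^2}^{1/2}\|\p_3 f\|_{L^2}^{1/2}$ and its variants. (iii) Absorb the resulting $\|\nabla^2 v\|_{L^2}^2$ terms into the dissipation, producing a differential inequality of Gr\"onwall type $\frac{d}{dt}\|\nabla v\|_{L^2}^2+\|\nabla^2 v\|_{L^2}^2\le C\,\big(1+\|v^3\|_{\dH^{\frac12+\frac2p}}^p\big)\big(1+\|\nabla v\|_{L^2}^2\big)$, whose right-hand side is integrable on $[0,T^\star[$ by hypothesis. (iv) Conclude $v\in L^\infty([0,T^\star[;\dH^1)\cap L^2([0,T^\star[;\dH^2)$, which by interpolation gives $v\in L^4([0,T^\star[;\dH^{3/2})$ (say), contradicting \refeq{blowupbasic} with $p=4$; alternatively it directly gives $\sup_{t<T^\star}\|v(t)\|_{\dH^{1/2}}<\infty$, contradicting\ccite{ISS}.

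I expect the main obstacle to be step (ii)–(iii): making the trilinear estimate close for \emph{all} $p>4$ with only the regularity $\dH^{\frac12+\frac2p}$ on $v^3$. The terms one can write as derivatives of $v^3$ are benign, but the genuinely dangerous contributions are those where $v^{\rm h}$ appears with two horizontal derivatives and is paired against itself; these must be rerouted, via $\dive v=0$ and careful anisotropic Littlewood--Paley paraproduct decomposition, so that every such term carries at least one factor that can be controlled by $v^3$ in $L^p_t\dH^{\frac12+\frac2p}$. For small $p$ close to $4$ the scaling is most delicate, and one likely needs to work not directly at the $\dH^1$ level but with the anisotropic norms $L^p_t\cB^{-\frac12,\frac12}_4$ or $\dH^{0,\frac12}$-type quantities used in\ccite{CZ5}, running a bootstrap: first upgrade $v^3$ itself to a better space using the equation for $v^3$ (which is a scalar convection-diffusion equation with the pressure as the only coupling), then feed that into the estimate for $v^{\rm h}$. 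The extra integrability coming from $\Omega_0\in L^2$ (equivalently $v_0\in\dH^1$) is precisely the ingredient that lets one close this bootstrap without the upper restriction $p<6$.
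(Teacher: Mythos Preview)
Your plan has a genuine gap at the point you yourself flag: the purely horizontal trilinear contribution. In a direct $\dH^1$ energy estimate on $v$, the term $\int (v^{\rm h}\cdot\nabla_{\rm h} v^{\rm h})\cdot\Delta v^{\rm h}\,dx$ contains pieces like $\int \partial_1 v^2\,\partial_2 v^1\,\Delta v^1\,dx$ in which no factor is $v^3$, $\partial_3 v^3$, or a vertical derivative. The divergence-free condition only converts $\partial_1 v^1+\partial_2 v^2$ into $-\partial_3 v^3$; it says nothing about $\partial_1 v^2$ or $\partial_2 v^1$ individually, so these terms cannot be ``rerouted'' to carry a $v^3$ factor by paraproduct manipulations alone. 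Your Gr\"onwall inequality in step~(iii) therefore does not close: the right-hand side would unavoidably contain a term like $\|\nabla_{\rm h} v^{\rm h}\|_{L^3}^3$ with no $\|v^3\|_{\dH^{\frac12+\frac2p}}$ coefficient.

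The paper resolves this by changing unknowns rather than by estimating $v$ directly. It rewrites $(NS)$ as a coupled system for the \emph{horizontal vorticity} $\omega=\partial_1 v^2-\partial_2 v^1$ and for $\partial_3 v^3$; in the $\omega$-equation the only quadratic-in-$\omega$ piece is the transport term $v^{\rm h}_{\rm curl}\cdot\nabla_{\rm h}\omega$, which vanishes in an $L^r$ energy estimate by integration by parts, while every remaining right-hand side term carries a factor $v^3$ or $\partial_3 v^3$. A second energy estimate on $\partial_3 v^3$, taken in a carefully chosen anisotropic Hilbert space $\cH^{\theta,r}=\dH^{-3\al(r)+\theta,-\theta}$, handles the single quadratic-in-$\omega$ term coming from the pressure. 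A further point you miss is that the paper does \emph{not} work at $r=2$: the space $\cH^{\theta,r}$ requires $\theta\in\,]0,\al(r)[$ with $\al(r)=\frac1r-\frac12$, which degenerates at $r=2$. Instead, Theorem\refer{thmain-1} is deduced from the $r<2$ statement (Theorem\refer{thmain}) by observing that $\Omega_0\in L^{3/2}\cap L^2$ implies $\Omega_0\in L^r$ for every $r\in[3/2,2[$, and that $\frac{2r}{2-r}\to\infty$ as $r\uparrow 2$, so any given $p\in\,]4,\infty[$ is covered by some $r<2$.
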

Let us compare this theorem with the preceding one.
Theorem\refer{existenceomegaL3/2} deals with solution the regularity
of which is exactly at the scaling of $(NS)$.
Theorem\refer{thmain-1} deals with solutions which are continuous in
time with value in~$\dot H^{\frac 12} \cap \dot H^{1}$. But the blow
up condition is much better. Indeed, the bigger~$p$ is, the better
the blow up condition is. Let us recall that in the case when we
control the norm of all component, the blow up condition
about~$L^p_t(\dot H^{\frac 12 +\frac 2 p})$ is elementary for
finite~$p$.  The case when~$p$ is infinite, namely that fact that if
there is blow up in finite time~$T^\star$, then~$\ds
\limsup_{t\rightarrow T^\star} \|v(t)\|_{\dot H^{\frac 12}}$ is
infinite is a consequence of the work\ccite{ISS}  of L. Escauriaza,
G. Seregin and V. \u{S}ver\'{a}k. It is a deep result the proof of
which uses strongly the particular structure of the Navier-Stokes
equation.

Let us mention that the method presented here seems far away from
proving the limiting case when $p$ is infinite. Let us point out
that we have no idea about the following problem: let us assume
that for some unit vector~$e$ of~$\R^3$,
$\|(v_0|e)_{\R^3}\|_{H^{\frac 12}}$ is small with respect to some
universal constant, does it imply that there is no blow up for the
Fujita-Kato solution of~$(NS)$?

\section{ Ideas of the proof and structure of the paper}

First of all, let us mention that we do not prove directly Theorem\refer{thmain-1} but in fact the following one, which obviously implies Theorem\refer{thmain-1}.
\begin{thm}
\label{thmain}
 {\sl Let us consider~$r$ in~$[3/2,2[$ and an initial data~$v_0$ in~$\cV^r$. If the
  lifespan~$T^\star$ of the unique maximal solution~$v$ of~$(NS)$ given by Theorem\refer{fujitakato+}  is finite,  then  we have, for any~$p$ in~$\ds\Bigl]4,\frac {2r} {2-r} \Bigr [$
and any unit vector~$e$ in~$\R^3$,
\beq
\label{k.1}
\int_0^{T^\star}\|(v(t)|e)_{\R^3}\|_{\dH^{\frac 1 2   +\frac 2 p }}^p\,dt=\infty.
\eeq }
\end{thm}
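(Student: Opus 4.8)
The proof refines the scheme introduced in~\ccite{CZ5}. Since the system~$(NS)$ is invariant under rotations of~$\R^3$, we may assume that~$e$ is the last vector of the canonical basis, and we write~$v=(v^{\rm h},v^3)$ with~$v^{\rm h}\eqdefa(v^1,v^2)$. We argue by contradiction: suppose that~$T^\star<\infty$ and that, for some~$p$ in~$\bigl]4,\frac{2r}{2-r}\bigr[$,
$$
M\eqdefa\int_0^{T^\star}\|v^3(t)\|_{\dH^{\frac 12+\frac 2p}}^p\,dt<\infty .
$$
By Theorem\refer{fujitakato+} it is enough to deduce from these two assumptions a bound of the form~$\int_0^{T^\star}\|v(t)\|_{\dH^{\frac 12+\frac 2q}}^q\,dt<\infty$ for some finite~$q$, since this is incompatible with the blow up of~$v$ at~$T^\star$. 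The guiding idea is to regard the pair~$v^{\rm h}$ as the main unknown and~$v^3$, together with its derivatives, as a source term whose time integrability is quantified by~$M$; the hypothesis~$v_0\in\cV^r$ is used only to control the remaining, purely horizontal, interactions.

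The heart of the matter is an anisotropic energy estimate for~$v^{\rm h}$, which solves
$$
\pa_t v^{\rm h}-\D v^{\rm h}+v\cdot\na v^{\rm h}+\nablah\Pi=0,\qquad \dive_{\rm h}v^{\rm h}=-\pa_3 v^3 .
$$
One works in a space at the scaling of~$(NS)$ --- morally~$L^\infty_t(\dH^{1/2})\cap L^2_t(\dH^{3/2})$ --- but with separate horizontal and vertical regularity indices, the vertical one being tuned according to~$r$, and one splits the forcing by a Bony decomposition in all three variables. The contributions of~$v^3\pa_3 v^{\rm h}$ and of the part of~$\Pi$ built on~$v^3$ are bounded, thanks to anisotropic Bernstein inequalities and the product laws of anisotropic Littlewood--Paley theory, by a time function controlled by~$M$ times a suitable norm of~$v^{\rm h}$, hence are harmless for Gronwall's lemma. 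The genuinely delicate piece is the horizontal self-interaction~$v^{\rm h}\cdot\nablah v^{\rm h}$ together with the remaining pressure terms: here one uses the constraint~$\dive_{\rm h}v^{\rm h}=-\pa_3 v^3$ to replace the ``compressible'' part of~$v^{\rm h}$ by a vertical derivative of~$v^3$ (again controlled by~$M$), so that only the part of~$v^{\rm h}$ associated with the vertical vorticity~$\omega^3\eqdefa\pa_1 v^2-\pa_2 v^1$ genuinely remains. This last contribution is estimated by interpolating between the energy bound --- which is finite since~$v_0\in\cV^r\hookrightarrow\dH^{1/2}$ --- and the extra regularity~$\dH^{1-3\al(r)}$ carried by~$\cV^r$ (recall\refeq{nal}); it is precisely the condition~$p<\frac{2r}{2-r}$ that renders the interpolation exponents arising here admissible, i.e. that lets Young's inequality absorb the top order part into the diffusion.

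Once this a priori estimate is closed by Gronwall's lemma, one gets~$v^{\rm h}$ bounded on~$[0,T^\star[$ in the anisotropic version of~$L^\infty_t(\dH^{1/2})\cap L^2_t(\dH^{3/2})$. Feeding this bound, together with the control of~$v^3$, back into the vorticity equation --- whose stretching term~$\omega\cdot\na v$ is now under control --- propagates~$\omega\in L^\infty([0,T^\star[;L^{3/2}\cap L^r)$, and a fortiori yields~$\int_0^{T^\star}\|v(t)\|_{\dH^{\frac 12+\frac 2q}}^q\,dt<\infty$ for a suitable finite~$q$. This contradicts Theorem\refer{fujitakato+} and thus proves\refeq{k.1}.

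The main obstacle lies entirely in the anisotropic estimate of the second step: controlling the horizontal self-interaction~$v^{\rm h}\cdot\nablah v^{\rm h}$ and the associated pressure in an anisotropic norm, with no a priori information on~$v^{\rm h}$ beyond incompressibility and~$v_0\in\cV^r$. It is the bookkeeping of the horizontal and vertical regularity indices in this step --- and the need to absorb the top order term by interpolation --- that pins down the range~$\bigl]4,\frac{2r}{2-r}\bigr[$; the endpoint~$p=\frac{2r}{2-r}$, and a fortiori~$p=\infty$, would seem to require a genuinely different, Navier--Stokes specific, argument.
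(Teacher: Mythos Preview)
Your outline has a genuine gap at the very step you flag as ``the main obstacle''. You claim that the contribution of~$v^{\rm h}_{\rm curl}\cdot\nablah v^{\rm h}_{\rm curl}$ (together with the corresponding pressure piece) can be closed ``by interpolating between the energy bound --- which is finite since~$v_0\in\cV^r\hookrightarrow\dH^{1/2}$ --- and the extra regularity~$\dH^{1-3\al(r)}$ carried by~$\cV^r$''. But neither of these is an a priori bound on the \emph{solution}: the~$\dH^{1/2}$ norm is precisely the quantity that may blow up at~$T^\star$, and the~$\dH^{1-3\al(r)}$ regularity is assumed only on~$v_0$ --- propagating it is the whole point. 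So the interpolation you invoke uses two quantities you do not control, and the estimate does not close. For the same reason, the final step (``feeding this bound back into the vorticity equation~$\ldots$ propagates~$\omega\in L^\infty_t(L^{3/2}\cap L^r)$'') inverts the logic: in the actual proof the~$L^r$ bound on the horizontal vorticity is the \emph{primary} estimate, not a corollary.

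The paper's route is structurally different. One does \emph{not} work at scaling~$0$ on~$v^{\rm h}$; instead one writes~$(NS)$ as a coupled system for the pair~$(\om,\partial_3v^3)$ with~$\om=\partial_1v^2-\partial_2v^1$, and performs two estimates at the \emph{higher} scaling~$\dH^{1-3\al(r)}$. First, an~$L^r$ energy estimate on~$\om$ (Proposition\refer{inegfondvroticity2D3D}), in which the only term quadratic in~$\om$ is the transport~$v\cdot\nabla\om$ and disappears by integration by parts; the remaining right-hand side is linear in~$\om$ with a~$v^3$ factor, plus a forcing controlled by~$\|\partial_3^2v^3\|_{L^2_t(\cH^{\theta,r})}$. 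Second, a~$\cH^{\theta,r}$ energy estimate on~$\partial_3v^3$ (Proposition\refer{estimadivhaniso}): the one pressure term quadratic in~$\om$, namely~$\partial_3^2\Delta^{-1}\sum_{\ell,m\le2}\partial_\ell v^m_{\rm curl}\partial_m v^\ell_{\rm curl}$, is paired against~$\partial_3v^3$ and thus acquires the needed factor of~$\|v^3\|_{\dH^{1/2+2/p}}$; the transport term~$v^{\rm h}\cdot\nablah\partial_3v^3$ is handled by a commutator argument (Lemma\refer{estimatedivhdemoeq3}). These two estimates are coupled; a non-standard Gronwall argument (Proposition\refer{SophisticatedGronwall}) closes them simultaneously. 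The conclusion then uses the componentwise criterion of Theorem\refer{blowupBesovendpoint}, not Theorem\refer{fujitakato+}. The range~$p\in\bigl]4,\frac{2r}{2-r}\bigr[$ arises from the choice~$\sigma=\frac{(p-2)r}{2p(r-1)}\in\bigl]\frac{r'}4,1\bigr[$ in Lemma\refer{anositropiclemma} and from the constraint~$\theta\in\bigl]3\al(r)-\frac2p,\al(r)\bigr[$ in the~$\cH^{\theta,r}$ estimate --- not from an interpolation of the type you describe.
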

The case when~$\ds r=\f32$ is exactly Theorem\refer{existenceomegaL3/2}.

\medbreak Let us explain the strategy of the proof of
Theorem\refer{thmain}. We  first remark that it makes no restriction
to assume that the unit vector~$e$ is the vertical
vector~$e_3\eqdefa (0,0,1).$  We follow essentially the same
strategy as that  in\ccite{CZ5} up to some differences due to the
fact that the regularity of  the solution~$v$ is higher than the one
given by the scaling of the equation.

\medbreak The first point consists in rewriting the homogeneous
incompressible Navier-Stokes equation  in terms of two unknowns:
\begin{itemize}
\item
the third component of the vorticity~$\Om$, which we denote by
$$
\om = \partial_1v^2-\partial_2v^1
$$
and which can be understood as the 2D vorticity for the vector field~$v^{\rm h} \eqdefa (v^1,v^2)$,

\item the quantity~$\partial_3v^3$ which is~$-\dive_hv^{\rm h}=- \partial_1v^1-\partial_2v^2$ because~$v$ is divergence free.
\end{itemize}
Immediate computations give
$$
(\wt {NS})
\quad\left\{
\begin{array}{c}
\partial_t\om+v\cdot\nabla\om -\D\om= \partial_3v^3\om +\partial_2v^3\partial_3v^1-\partial_1v^3\partial_3v^2,\\
\partial_t \partial_3v^3 +v\cdot\nabla\partial_3v^3-\D\partial_3v^3+\partial_3v\cdot\nabla v^3
=-\partial_3^2\D^{-1} \Bigl(\ds\sum_{\ell,m=1}^3 \partial_\ell
v^m\partial_mv^\ell\Bigr).
\end{array}
\right.
$$
Let us analyse this formulation of the Navier-Stokes system keeping
in mind that we already have control of ~$v^3$ in the
norm~$L^p_T\bigl(\dH^{\frac 1 2   + \frac 2 p }\bigr)$. Let us first
introduce  the notations \beq \label{a.1} \nabla_{\rm
h}^\perp=(-\p_2,\p_1),\quad\D_{\rm h}=\p_1^2+\p_2^2,\quad \vcurl
\eqdefa\nablah^\perp \D_{\rm h}^{-1} \om \andf \vdiv\eqdefa
-\nablah  \D_{\rm h}^{-1}
\partial_3v^3.
 \eeq
 Then we have, using the Biot-Savart's law in the horizontal variables
\beq
\label{a.1wrt} v^{\rm h}=\vcurl+\vdiv.
 \eeq
Let us concentrate on the equation on~$\omega$. As we have no a
priori control on~$\omega$, quadratic terms in this equation of
$(\wt{NS})$ seems dangerous. In fact,
 there is only one term of this type which is~$\vcurl\cdot\nablah\om$. A way to get rid of it is to
 use an energy type estimate and the divergence free condition
 on~$v$. Instead of working with scaling invariant norms as that in \cite{CZ5}, namely performing a
  $L^{\frac 32}$ energy estimate for~$\om,$ here we shall perform a $L^r$ energy estimate for $\om.$
This is based on the following lemma.
\begin{lem}
\label{estimpropagationLp}
 {\sl Let~$r$ be in~$]1,2[$ and~$a_0$ a
function in~$L^r$. Let us consider a function~$f$ in~$L^1_{\rm loc}(\R^+;L^r)$ and~$v$ a divergence free vector field in~$L^2_{\rm
loc}(\R^+;L^\infty)$ . If $a$ solves
$$
(T_v)\quad\left\{
\begin{array}{c} \partial_t a  -\D a +v\cdot\nabla a=f\\
a_{|t=0}=a_0
\end{array}
\right.
$$
then ~$|a|^{r/2}$ belongs to~$L^\infty_{\rm loc}(\R^+;L^2)\cap
L^2_{\rm loc}(\R^+;\dH^1)$ and \beq\label{tvestimate} \begin{split}
\frac1r\int_{\R^3} | a(t,x) |^rdx &+(r-1) \int_0^t \int_{\R^3}
|\nabla a(t',x)|^2|a(t',x)|^{r-2}
dx\,dt'\\
&=\frac1r\int_{\R^3} | a_0(x) |^r\,dx+ \int_0^t\int_{\R^3} f(t',x)
a(t',x) |a(t',x)|^{r-2}dx\,dt'. \end{split} \eeq }
\end{lem}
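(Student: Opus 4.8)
The plan is to prove Lemma~\ref{estimpropagationLp} by a standard energy-type argument, carried out first at the level of smooth, decaying solutions and then extended to the stated regularity class by approximation. First I would multiply the equation~$(T_v)$ by $a|a|^{r-2}$ and integrate over~$\R^3$. The time-derivative term produces $\frac1r\frac{d}{dt}\int_{\R^3}|a|^r\,dx$. For the Laplacian term, the key computation is the identity
\beq
\label{planlap}
-\int_{\R^3}\D a\, a|a|^{r-2}\,dx=(r-1)\int_{\R^3}|\na a|^2|a|^{r-2}\,dx,
\eeq
obtained by integration by parts and using $\na(a|a|^{r-2})=(r-1)|a|^{r-2}\na a$ (valid for $r>1$, where $|a|^{r-2}$ is integrable against $|\na a|^2$ near the zero set of~$a$; rigorously one regularizes $|a|^{r-2}$ by $(a^2+\e^2)^{(r-2)/2}$ and lets $\e\to0$). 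For the transport term, since $\dv v=0$ one writes $v\cdot\na a\,a|a|^{r-2}=\frac1r\,v\cdot\na(|a|^r)$, so that $\int_{\R^3}v\cdot\na a\,a|a|^{r-2}\,dx=-\frac1r\int_{\R^3}(\dv v)|a|^r\,dx=0$. This is precisely the mechanism that kills the dangerous quadratic transport term mentioned before the lemma. Integrating in time from $0$ to $t$ then yields~\eqref{tvestimate}, and the same computation with absolute values in place of equalities, together with the sign of the gradient term, gives the bounds $|a|^{r/2}\in L^\infty_{\rm loc}(\R^+;L^2)\cap L^2_{\rm loc}(\R^+;\dH^1)$.

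The main obstacle is rigor, not ideas: the formal manipulations above require more regularity and decay than the hypotheses $a_0\in L^r$, $f\in L^1_{\rm loc}(\R^+;L^r)$, $v\in L^2_{\rm loc}(\R^+;L^\infty)$ provide, and $L^r$ with $1<r<2$ is not a Hilbert space, so one cannot invoke the usual $L^2$ parabolic theory directly. I would handle this by a two-parameter regularization. First, regularize the nonlinear weight: replace $a|a|^{r-2}$ by $\varphi_\e(a)\eqdefa a(a^2+\e^2)^{\frac{r-2}2}$, whose primitive $\Phi_\e$ satisfies $\Phi_\e(s)\to \frac1r|s|^r$ and $\Phi_\e'(s)=\varphi_\e(s)$, $s\varphi_\e(s)\ge0$, with $\varphi_\e$ bounded and smooth. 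Second, regularize the data and the drift: mollify $a_0$ and $v$ in space (keeping $\dv v_n=0$) and truncate in time so that the approximate problems have smooth solutions $a_n$ by standard linear parabolic theory; multiply by $\varphi_\e(a_n)$, integrate by parts (now fully justified), use $\dv v_n=0$ to kill the transport term exactly, and obtain
\beq
\label{planapprox}
\int_{\R^3}\Phi_\e(a_n(t))\,dx+\int_0^t\!\!\int_{\R^3}\varphi_\e'(a_n)|\na a_n|^2\,dx\,dt'
=\int_{\R^3}\Phi_\e(a_{0,n})\,dx+\int_0^t\!\!\int_{\R^3}f_n\,\varphi_\e(a_n)\,dx\,dt',
\eeq
where $\varphi_\e'(s)=(a^2+\e^2)^{\frac{r-2}2}\bigl(1+(r-2)\frac{s^2}{s^2+\e^2}\bigr)\ge(r-1)(s^2+\e^2)^{\frac{r-2}2}\ge0$ since $r>1$.

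Finally I would pass to the limit. The right-hand side of~\eqref{planapprox} is controlled by $\|a_{0,n}\|_{L^r}^r+\int_0^t\|f_n\|_{L^r}\|a_n\|_{L^r}^{r-1}$ via Hölder, so combined with the nonnegativity of the dissipation term and a Gronwall-type argument one gets a uniform $L^\infty_{\rm loc}(L^r)$ bound on $a_n$, whence a uniform bound on $|a_n|^{r/2}$ in $L^2_{\rm loc}(\dH^1)$ (using $\varphi_\e'(s)\ge(r-1)(s^2+\e^2)^{(r-2)/2}$ and the pointwise identity $|\na(|a|^{r/2})|^2=\frac{r^2}{4}|a|^{r-2}|\na a|^2$, again regularized). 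Standard weak-compactness and the uniqueness of the limit (the linear problem $(T_v)$ has a unique solution in the natural class, which follows from the same energy identity applied to the difference of two solutions) let me first send $n\to\infty$ at fixed~$\e$, recovering~\eqref{planapprox} with the original data, and then send $\e\to0$ using monotone/dominated convergence: $\Phi_\e(s)\uparrow\frac1r|s|^r$, $\varphi_\e(s)\to s|s|^{r-2}$, and $\varphi_\e'(s)|\na a|^2\to(r-1)|a|^{r-2}|\na a|^2$ by Fatou on the left and dominated convergence on the right. This yields~\eqref{tvestimate} and the claimed regularity for $|a|^{r/2}$.

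\smallbreak
A cleaner alternative, which I would mention as a remark, is to bypass the $\e$-regularization of the weight by testing against $T_K(a)|T_K(a)|^{r-2}$ for a truncation $T_K$ and using the renormalization theory of DiPerna--Lions for transport-diffusion equations with divergence-free drift; since $v\in L^2_{\rm loc}(\R^+;L^\infty)$ the drift is in particular in $L^1_{\rm loc}(L^1_{\rm loc})$ with zero divergence, so the renormalized formulation holds and one reads off~\eqref{tvestimate} directly with $\beta(s)=\frac1r|s|^r$. Either route gives the lemma; the energy-method route is self-contained and is the one I would write in detail.
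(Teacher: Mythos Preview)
Your argument is correct and is precisely the standard route to this $L^r$ energy identity: multiply by $a|a|^{r-2}$, use $\dive v=0$ to kill the transport term, and justify the formal computation via the regularization $\varphi_\e(a)=a(a^2+\e^2)^{(r-2)/2}$ together with smoothing of the data and drift. The paper does not actually prove this lemma; it simply writes ``For the proof, see for instance\ccite{CZ5}, Lemma~3.1,'' and the proof in that reference follows the same scheme you outline.
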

For the proof, see for instance\ccite{CZ5}, Lemma~3.1.

  \medbreak
  The  terms  on the right-hand side of the equation on~$\omega$ in $(\wt {NS})$  can be decomposed as~$\cL\om+F$ with
 $$
  \cL\om \eqdefa  \partial_3v^3\om +\partial_2v^3\partial_3v_{\rm curl}^1-\partial_1v^3\partial_3v_{\rm
curl}^2\andf F \eqdefa\partial_2v^3\partial_3v_{\rm div}^1-\partial_1v^3\partial_3v_{\rm
div}^2.
$$
These two terms are different. The  term~$\cL\om$ is linear with respect to~$\omega$ and thus can be estimated with quantities  related to scaling invariant space after some Gronwall lemma. The term~$F$ is a  forcing  term. It is quadratic with respect to~$v_3$   and will be estimated with one term related to scaling~$0$ and another term related to the scaling corresponding to the vorticity in~$L^r$.

\medbreak It remains   to examine  the second equation of~$(\wt
{NS}),$ which is
$$
 \partial_t \partial_3v^3 +v\cdot\nabla\partial_3v^3-\D\partial_3v^3+\partial_3v\cdot\nabla v^3
=-\partial_3^2\D^{-1} \Bigl(\ds\sum_{\ell,m=1}^3 \partial_\ell
v^m\partial_mv^\ell\Bigr).
$$
The main feature of this equation is that it contains only one
quadratic term with respect to~$\om$, namely the term
$$
-\partial_3^2\D^{-1} \Bigl(\ds\sum_{\ell,m=1}^2 \partial_\ell
v_{\rm curl}^m\partial_mv_{\rm curl}^\ell\Bigr).
$$
 Because we control~$v^3$ on some norm, a  way to get rid of this term  is to perform an energy estimate on~$\partial_3v^3$, namely an estimate on
 $$
\|\partial_3 v^3(t)\|_{\cH}
 $$
 for an adapted Hilbert space $\cH.$  Indeed, we  hope that if we control~$v^3,$ we can  control
terms of the type
$$
\bigl(\partial_3^2\D^{-1} (\partial_\ell
v_{\r  m curl}^m\partial_mv_{\rm curl}^\ell)\big| \partial_3v^3\bigr)_{\cH}
$$
with quadratic terms in~$\om$ and thus it fits
with~$\|\partial_3v^3\|_{\cH}^2$ so that we can hope to close the
estimate. Again here, the scaling  helps us for the choice of the
Hilbert space~$\cH$. The scaling  of~$\cH$ must be the scaling
of~$\dH^{-3\al(r)}$ for $\al(r)$ given by \eqref{nal}. Moreover,
because of the operator $\nablah\D_{\rm h}^{-1}$, it is natural to
measure horizontal derivatives and vertical derivatives differently.
This leads to the following definition.

\begin{defi}
\label{def2.1ad} {\sl For~$(s,s')$ in~$\R^2$, $\dH^{s,s'} $ denotes
the space of tempered distribution~$a$  such~that
 $$
\|a\|^2_{\dH^{s,s'}} \eqdefa \int_{\R^3} |\xi_{\rm
h}|^{2s}|\xi_3|^{2s'} |\wh a (\xi)|^2d\xi <\infty\with \xi_{\rm
h}=(\xi_1,\xi_2).
 $$
 For $\al(r)$ given by \eqref{nal} and ~$\theta$ in~$]0,\al(r)[$, we denote~$\cH^{\theta,r}\eqdefa \dH^{-3\al(r)+\theta,-\theta}$.
 }
 \end{defi}

 We want to emphasize  the fact that  anisotropy in the
regularity is highly related to the divergence free condition.
Indeed, let us consider a divergence free vector field~$w=(w^{\rm
h}, w^3)$ in~$\dH^{1-3\al(r)}$ and let us
estimate~$\|\partial_3w^3\|_{\cH^{\theta,r}}$. By definition of
the~$\cH^{\theta,r}$  norm, we have
 $$
 \|\partial_3w^3\|^2_{\cH^{\theta,r}} = A_L+A_H \with  A_L\eqdefa \int_{|\xi_{\rm h}|\leq |\xi_3|}
 |\xi_{\rm h}|^{-6\al(r)+2\th} |\xi_3|^{-2\th} |\cF(\partial_3w^3)(\xi)|^2 d\xi.
 $$
 In the case when~$|\xi_{\rm h}|\geq |\xi_3|$, since $\th\in ]0,\al(r)[,$ we write that
 \beno
 A_H  \leq  \int_{\R^3} |\xi_3|^{2(1-3\al(r))}\,|\wh w^3(\xi)|^2d\xi\leq \|w^3\|^2_{\dH^{1-3\al(r)}}.
 \eeno
 In the case when~$|\xi_{\rm h}|\leq |\xi_3|$, we use divergence free condition and write that   \beno
 A_L & \leq  &a  mp;  \int_{|\xi_{\rm h}|\leq |\xi_3|}
 |\xi_{\rm h}|^{-6\al(r)} |\cF(\dive_{\rm h} w^{\rm h})(\xi)|^2 d\xi\\
 & \leq &  \int_{\R^3}
 |\xi_{\rm h}|^{2(1-3\al(r))}\, | \widehat{w}^{\rm h}(\xi)|^2 d\xi = \|w^{\rm h}\|_{\dH^{1-3\al(r)}}^2.
 \eeno
 Thus  for any divergence  free vector field~$w$ in~$\dH^{1-3\al(r)}$,  we have
 \beq
 \label{initialdataHtheta}
 \|\partial_3w^3\|_{\cH^{\theta,r}} \leq C \|w\|_{\dH^{1-3\al(r)}}.
 \eeq
To use the space  efficiently in the proof, we need to rely them on
anisotropic Littlewood-Paley theory and also anisotropic Besov
spaces. This is the purpose of the third section.

\medbreak The first step of the proof of Theorem\refer{thmain} is
the following
 proposition:
  \begin{prop}
 \label{inegfondvroticity2D3D}
 {\sl
Let~$v_0$ be in~$\cV^r$;  let us consider a solution $v$ of~$(NS)$ given by Theorem \refer{fujitakato+}. Then for any $p$ in~$
 \bigl]4,\f{2r}{2-r}\bigr[$ and any~$\theta$ in~$]0,\al(r)[,$
 a constant~$C$ exists such that,  for any~$t<T^\star$,
\beq \label{a.10qp}
\begin{split}
\frac 1r \bigl\| \,\om_{\frac{r}2}(t)\bigr\|_{L^2}^{2}
 &+\frac{r-1}{r^2} \int_0^t\bigl\|\nabla \om_{\f{r}2}(t')\bigr\|_{L^2}^2\,dt'  \leq \biggl(\frac 1r  \| \,| \om_0|^{\frac{r}2}\|_{L^2}^{2}
\\
&\qquad{} + \Bigl(\int_0^t \|\p^2_3v^3(t')
\|^{2}_{\cH^{\theta,r}}\,dt'\Bigr)^{\frac {r}{2}}\biggr)\exp
\Bigl(C\int_0^t \|v^3(t')\|_{\dH^{\frac 1 2   +\frac 2 p }}^p dt'\Bigr).
\end{split}
\eeq }
\end{prop}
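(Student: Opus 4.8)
The plan is to apply Lemma \ref{estimpropagationLp} with $a = \omega$ (which solves $(T_v)$ with $v$ the full divergence free velocity field and right-hand side equal to $\cL\omega + F$), thereby obtaining the identity \eqref{tvestimate}. The left-hand side of \eqref{tvestimate} is exactly $\frac1r\|\omega_{r/2}(t)\|_{L^2}^2 + (r-1)\int_0^t\!\int_{\R^3}|\nabla\omega|^2|\omega|^{r-2}$, and since $|\nabla\omega|^2|\omega|^{r-2} = \frac{4}{r^2}|\nabla(|\omega|^{r/2})|^2$ up to the usual constant, this accounts for the two terms on the left of \eqref{a.10qp}. It then remains to bound the source term $\int_0^t\!\int_{\R^3}(\cL\omega + F)\,\omega|\omega|^{r-2}\,dx\,dt'$ by the two contributions appearing inside the bracket and the exponential of \eqref{a.10qp}, after which a Gronwall argument closes the estimate. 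The key is that the dangerous term $\vcurl\cdot\nablah\omega$ has already been removed by the energy structure (it contributes a total horizontal divergence that integrates to zero against $|\omega|^{r/2}\cdot|\omega|^{r/2}$, using $\dive_h\vcurl = 0$), so only $\cL\omega$ and $F$ survive.

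First I would estimate the linear term $\int_{\R^3}(\cL\omega)\,\omega|\omega|^{r-2}\,dx$. Writing $\cL\omega = \partial_3v^3\,\omega + \partial_2v^3\partial_3 v^1_{\mathrm{curl}} - \partial_1v^3\partial_3v^2_{\mathrm{curl}}$, the first piece gives $\int \partial_3v^3|\omega|^r \le \|\partial_3v^3\|_{L^\infty}\|\omega_{r/2}\|_{L^2}^2$, and more sharply one uses the embedding $\dH^{1/2+2/p}\hookrightarrow$ (an $L^\infty$-type or product estimate) to bound it by $C\|v^3(t')\|_{\dH^{1/2+2/p}}^p + (\text{small})\|\nabla\omega_{r/2}\|_{L^2}^2$ plus a term proportional to $\|\omega_{r/2}\|_{L^2}^2$ — this is precisely what produces the factor $\exp(C\int_0^t\|v^3\|_{\dH^{1/2+2/p}}^p)$ after Gronwall. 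The remaining two pieces of $\cL\omega$ are handled the same way after observing that $\partial_3 v^h_{\mathrm{curl}} = \nablah^\perp\D_h^{-1}\partial_3\omega$, so $\partial_3 v^1_{\mathrm{curl}}, \partial_3v^2_{\mathrm{curl}}$ are zeroth-order (in fact order $-1$ in horizontal frequencies) in $\partial_3\omega$; pairing against $\partial_j v^3$ and $\omega|\omega|^{r-2}$ and using Hölder in the anisotropic Lebesgue scale together with the bound on $v^3$, one again absorbs a small multiple of $\|\nabla\omega_{r/2}\|_{L^2}^2$ and is left with $C\|v^3\|^p_{\dH^{1/2+2/p}}\|\omega_{r/2}\|_{L^2}^2$.

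Next comes the forcing term $F = \partial_2v^3\partial_3v^1_{\mathrm{div}} - \partial_1v^3\partial_3v^2_{\mathrm{div}}$, where $\vdiv = -\nablah\D_h^{-1}\partial_3v^3$ so that $\partial_3\vdiv$ is, in horizontal frequencies, of order $-1$ applied to $\partial_3^2v^3$; morally $F$ is quadratic in $v^3$ and carries one factor that can be measured in the anisotropic Hilbert space $\cH^{\theta,r} = \dH^{-3\al(r)+\theta,-\theta}$ (this is where $\|\partial_3^2v^3\|_{\cH^{\theta,r}}$ enters) and one factor measured at scaling $0$ via $\|v^3\|_{\dH^{1/2+2/p}}$. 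I would estimate $\int_0^t\!\int F\,\omega|\omega|^{r-2}$ by Hölder in time with exponents chosen so that the $v^3$ factor produces $\int_0^t\|v^3\|_{\dH^{1/2+2/p}}^p$, the $\partial_3^2v^3$ factor produces $(\int_0^t\|\partial_3^2v^3\|^2_{\cH^{\theta,r}})^{r/2}$ raised to the correct power, and the $\omega|\omega|^{r-2}$ factor (which has the regularity of $\omega_{r/2}$ in $L^2$ and $\dH^1$ after interpolation) is split between a small multiple of the good term $\int_0^t\|\nabla\omega_{r/2}\|_{L^2}^2$ and the quantity $\|\omega_{r/2}\|_{L^\infty_t L^2}$, the latter then appearing — after the time integration and Young's inequality — outside as a prefactor that combines with the Gronwall constant. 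The restriction $p < \frac{2r}{2-r}$ and $\theta\in]0,\al(r)[$ is exactly what makes these Hölder exponents admissible (all conjugate exponents positive and summing correctly), and anisotropic product laws from Section 3 are what let the $\dH^{-3\al(r)+\theta,-\theta}$ norm of $\partial_3^2v^3$ control the corresponding piece of the trilinear form without losing derivatives. I expect the main obstacle to be precisely this last step: getting the trilinear estimate for $\int F\,\omega|\omega|^{r-2}$ with the anisotropic Littlewood–Paley decomposition so that the three factors land in $\cH^{\theta,r}$, $\dH^{1/2+2/p}$, and the interpolation space between $L^2$ and $\dH^1$ for $\omega_{r/2}$ respectively, while keeping every exponent in range — the linear term $\cL\omega$ is comparatively routine once the energy cancellation of $\vcurl\cdot\nablah\omega$ is in hand. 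Finally, collecting the bounds and applying Gronwall's lemma to the function $t\mapsto \frac1r\|\omega_{r/2}(t)\|_{L^2}^2$ yields \eqref{a.10qp}.
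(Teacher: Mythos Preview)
Your overall architecture is correct: apply Lemma~\ref{estimpropagationLp} to $\omega$, split the source as $\cL\omega + F$, estimate each piece, and close with Gronwall. Two points need real work, however.

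For the term $\int \partial_3 v^3\,|\omega|^r$, the route via $\|\partial_3 v^3\|_{L^\infty}$ fails outright: you only control $v^3$ in $\dH^{\frac12+\frac2p}$, which does not embed into $L^\infty$, and $\partial_3 v^3$ is one derivative worse. The paper instead integrates by parts in $x_3$, writing $\bigl|\int \partial_3 v^3\,|\omega|^r\bigr| \le r \int |v^3|\,|\partial_3\omega|\,|\omega|^{r-1}$, and then applies H\"older with the exponent triple $\bigl(\frac{3p}{p-2},\,r,\,\frac{6p(r-1)}{2pr-3p+2r}\bigr)$ together with Lemma~\ref{BiotSavartomega} and Sobolev interpolation on $\omega_{r/2}$. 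This is where the constraint $p<\frac{2r}{2-r}$ first bites.

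For the $v^{\rm h}_{\rm curl}$ and $v^{\rm h}_{\rm div}$ pieces (the terms you call $F_2$ and $F_3$), ``H\"older in the anisotropic Lebesgue scale'' is not enough of a plan. The paper isolates the common structure $\int \partial_{\rm h}\Delta_{\rm h}^{-1} f\,\partial_{\rm h} a\,\omega_{r-1}\,dx$ (with $f=\partial_3\omega$ for $F_2$, $f=\partial_3^2 v^3$ for $F_3$, and $a=v^3$) and proves a dedicated trilinear bound, Lemma~\ref{anositropiclemma}, yielding control by $\min\bigl\{\|f\|_{L^r},\|f\|_{\cH^{\theta,r}}\bigr\}\,\|a\|_{\dH^s}\,\|\omega_{r/2}\|_{\dH^\sigma}^{2/r'}$. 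Two ingredients are essential and absent from your outline: first, Lemma~\ref{puisancealphaBesov} is needed to place the nonlinear object $\omega_{r-1}=G(\omega_{r/2})$ in the Besov space $\dB^{2\sigma/r'}_{r',r'}$; second, Bony's decomposition of $\partial_{\rm h} a\,\omega_{r-1}$ must separate the paraproduct $\partial_{\rm h} T(\omega_{r-1},a)$ (paired against $\partial_{\rm h}^2\Delta_{\rm h}^{-1} f$ isotropically, via either $L^r$ or $\cH^{\theta,r}$ duality) from a remainder $A(a,\omega)$, on which one exploits the genuine horizontal smoothing of $\partial_{\rm h}\Delta_{\rm h}^{-1}$ through Hardy--Littlewood--Sobolev in $x_{\rm h}$ only. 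Without this lemma the $\cH^{\theta,r}$ norm of $\partial_3^2 v^3$ cannot be made to appear, and your H\"older-in-time scheme for $F$ will not close with the exponent $\frac r2$ on $\int_0^t\|\partial_3^2 v^3\|_{\cH^{\theta,r}}^2$.
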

 Here and in all  that follows,  for scalar function~$a$ and
for~$\al$ in the interval~$]0,1[$, we always  denote
\beq
\label{a.10wl}
a_\al \eqdefa  \frac a {|a|} |a|^\al.
\eeq
Up to some technical difficulties, the proof
of this proposition follows essentially the lines  of the analogous proposition in\ccite {CZ5}; it is the purpose of the fourth section.

\medbreak
Next we want to control~$\|\p^2_3v^3\|_{L^2_t(\cH^{\theta,r})}$. As
already explained, a way to get rid of the only quadratic term
in~$\om$, namely
$$
-\partial_3^2\D^{-1}\Bigl( \sum_{\ell,m=1}^2 \partial_\ell v_{\rm curl}^m\partial_m v_{\rm curl}^\ell\Bigr)
$$
is to perform  an energy estimate for the norm~$\cH^{\theta,r}$.
\begin{prop}
 \label{estimadivhaniso}
 {\sl
Let~$v_0$ be in~$\cV^r$;  let us consider a solution $v$ of~$(NS)$ given by Theorem\refer{fujitakato+}. For any~$p$ in~$\bigl]4,\f{2r}{2-r}\bigr[$ and $\th$ in
 $\bigl]3\al(r)-\frac 2 p , \al(r) \bigr[,$
 a constant~$C$ exists such that for any~$t<T^\star,$ we have
\beq
\label{b.4bqp}
\begin{split}
& \|\p_3v^3(t)\|_{\cH^{\theta,r}}^2
 +\int_0^t\|\na\p_3v^3(t')\|_{\cH^{\theta,r}}^2\,dt' \leq C \exp \Bigl(C\int_0^t
\|v^3(t')\|^{p}_{\dH^{\frac 1 2   +\frac 2 p }}\,dt' \Bigr)\\
 &\qq  uad\qquad{}\times\biggl(\|\Om_0\|_{L^{r}}^2
 + \int_0^t \Bigl(\|v^3(t')
\|_{\dH^{\frac 1 2   +\frac 2 p }}\bigl\|\om_{\f{r}2}(t')\bigr\|_{L^2}^{2\left(2\al(r)+\frac 1 p \right)}\bigl\|\na\om_{\f{r}2}(t')\bigr\|_{L^2}^{\frac 2 {p'}}
\\
&\qquad\qquad\qquad\qquad\qquad{}+\|v^3(t')\|_{\dH^{\frac 1 2   +\frac 2 p }}^2\bigl\|\om_{\f{r}2}(t')\bigr\|_{L^2}^{4\left(\al(r)+\frac 1 p \right)}
\bigl\|\na\om_{\f{r}2}(t')\bigr\|_{L^2}^{2\left(1-\frac 2 p \right)}\Bigr)\,dt'\biggr).
\end{split}
\eeq Here and in all that follows, $p'$ denotes the conjugate number
of $p$ so that $\ds \frac1{p'}=1-\frac1p\,\cdotp$}
\end{prop}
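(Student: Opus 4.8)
The plan is to derive \eqref{b.4bqp} from an anisotropic energy estimate for the second equation of~$(\wt{NS})$ in the space~$\cH^{\theta,r}$ of Definition\refer{def2.1ad}, using the anisotropic Littlewood--Paley calculus of the third section. Rewriting that equation as
\[
\partial_t\p_3v^3+v\cdot\grad\p_3v^3-\D\p_3v^3=-\p_3v\cdot\grad v^3-\p_3^2\D^{-1}\Bigl(\sum_{\ell,m=1}^3\p_\ell v^m\p_mv^\ell\Bigr),
\]
I would apply the horizontal and vertical dyadic truncations, take the $L^2$ scalar product of the localized equation with the corresponding localization of~$\p_3v^3$, multiply by~$2^{2k(-3\al(r)+\th)}2^{-2q\th}$ and sum in~$(k,q)$. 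The Laplacian produces on the left-hand side the full anisotropic dissipation~$\|\na\p_3v^3\|_{\cH^{\theta,r}}^2$, and the data term~$\|\p_3v^3(0)\|_{\cH^{\theta,r}}^2$ is controlled by~$\|v_0\|_{\dH^{1-3\al(r)}}^2\lesssim\|\Om_0\|_{L^r}^2$ thanks to~\eqref{initialdataHtheta}. That~$\om_{\f r2}$ belongs to~$L^\infty_t(L^2)\cap L^2_t(\dH^1)$, which is needed to make sense of the right-hand side of~\eqref{b.4bqp}, follows from Lemma\refer{estimpropagationLp} applied to the first equation of~$(\wt{NS})$.

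For the transport term, the divergence free condition on~$v$ makes the sum of the self-interaction terms vanish, so only the commutators of the localization operator with~$v\cdot\grad$ survive; decomposing~$v\cdot\grad=\vcurl\cdot\nablah+\vdiv\cdot\nablah+v^3\p_3$ as in~\eqref{a.1}--\eqref{a.1wrt}, the anisotropic commutator estimates of the third section handle them. The piece carrying~$v^3$ produces, after Young's inequality, a small multiple of the dissipation plus a term~$\|v^3\|_{\dH^{\frac12+\frac2p}}^p\|\p_3v^3\|_{\cH^{\theta,r}}^2$ to be absorbed later by Gronwall; the piece carrying~$\vdiv=-\nablah\D_{\rm h}^{-1}\p_3v^3$ is controlled by~$\p_3v^3$ itself (again a small multiple of the dissipation plus a Gronwall term); and the piece carrying~$\vcurl=\nablah^\perp\D_{\rm h}^{-1}\om$ is expressed through~$\na\vcurl$, i.e. through~$\om$, via the horizontal Biot--Savart law, hence, using~$\om=\om_{\f r2}^{\,2/r}$ in the sense of~\eqref{a.10wl}, through~$\|\om_{\f r2}\|_{L^2}$ and~$\|\na\om_{\f r2}\|_{L^2}$.

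The term~$\p_3v\cdot\grad v^3$, together with all summands of the pressure term except the one discussed below, is bilinear with at least one factor equal to~$v^3$ or (through~$\vdiv$) to~$\p_3v^3$. Estimating their scalar products with~$\p_3v^3$ by the anisotropic product laws of the third section, in the duality between~$\cH^{\theta,r}$ and its dual, one bounds them by a small multiple of the dissipation plus expressions in which each~$v^3$ occurs through~$\|v^3\|_{\dH^{\frac12+\frac2p}}$ and the remaining~$\vcurl$-factors become~$\om$- then~$\om_{\f r2}$-norms. The powers~$2\bigl(2\al(r)+\f1p\bigr)$, $\f2{p'}$ in the part linear in~$v^3$ and~$4\bigl(\al(r)+\f1p\bigr)$, $2\bigl(1-\f2p\bigr)$ in the part quadratic in~$v^3$ appearing in~\eqref{b.4bqp} are exactly those produced by a Gagliardo--Nirenberg interpolation of~$\om_{\f r2}$ between~$L^2$ and~$\dH^1$ combined with the anisotropic Sobolev embeddings; it is in requiring these embeddings and product laws to be licit that the constraints~$p<\f{2r}{2-r}$ and~$\th\in\bigl]3\al(r)-\f2p,\al(r)\bigr[$ enter.

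It remains to treat the genuinely dangerous summand~$-\p_3^2\D^{-1}\bigl(\sum_{\ell,m=1}^2\p_\ell v_{\rm curl}^m\p_m v_{\rm curl}^\ell\bigr)$, which is quadratic in~$\om$ and is the only one involving neither~$v^3$ nor~$\vdiv$. Since~$\dive_{\rm h}\vcurl=0$ it equals~$-\p_3^2\D^{-1}\dive_{\rm h}\dive_{\rm h}(\vcurl\otimes\vcurl)$, so the operator acting on~$\vcurl\otimes\vcurl$ is the Fourier multiplier of symbol~$\xi_3^2(\xi_{\rm h}\otimes\xi_{\rm h})|\xi|^{-2}$, which is small in the horizontal frequencies; estimating its scalar product with~$\p_3v^3$ in~$\cH^{\theta,r}$ by the anisotropic product rules, and distributing the derivatives so as to use only one derivative of~$\p_3v^3$, one absorbs part of it into the dissipation and is left with a quadratic expression in~$\om$, again rewritten through~$\|\om_{\f r2}\|_{L^2}$ and~$\|\na\om_{\f r2}\|_{L^2}$. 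Once every contribution is of one of the three types above, Gronwall's lemma gives~\eqref{b.4bqp}. I expect this last summand to be the main obstacle: one has to play off against one another the anisotropy of~$\cH^{\theta,r}$ (horizontal derivatives weighted by~$-3\al(r)+\th$, vertical ones by~$-\th$), the structure~$\p_3^2\D^{-1}\dive_{\rm h}\dive_{\rm h}$, the full anisotropic dissipation and the Gagliardo--Nirenberg interpolation, and it is exactly for~$p$ and~$\th$ in the stated ranges that the estimate closes with quadratic rather than quartic bounds in~$\om$; producing precisely the exponents of~\eqref{b.4bqp} and checking that every~$v^3$-free remainder can be absorbed is the other source of technicality.
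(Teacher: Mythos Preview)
Your overall architecture is correct: an $\cH^{\theta,r}$ energy estimate on the second equation of~$(\wt{NS})$, followed by Gronwall, with the initial data handled via~\eqref{initialdataHtheta}. The commutator treatment of the transport term is also on the right track and is the content of the paper's Lemma\refer{estimatedivhdemoeq3}; the paper in fact singles this out as the most delicate step, requiring a full horizontal-and-vertical Bony decomposition together with the divergence-free cancellation.

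There is, however, a genuine gap in your handling of the term quadratic in~$\om$,
\[
\Bigl(\p_3^2\D^{-1}\sum_{\ell,m=1}^2\p_\ell v^m_{\rm curl}\,\p_m v^\ell_{\rm curl}\,\Big|\,\p_3v^3\Bigr)_{\cH^{\theta,r}}.
\]
You propose to ``use only one derivative of~$\p_3v^3$'' and ``absorb part of it into the dissipation''. If you bound this by $\|\na\p_3v^3\|_{\cH^{\theta,r}}\cdot X$ with $X$ quadratic in~$\om$-norms and apply Young's inequality, you are left with~$X^2$, which is \emph{quartic} in~$\om$ and carries no factor of~$v^3$. No such term appears on the right of~\eqref{b.4bqp}, and a $v^3$-free quartic-in-$\om$ source would not close the coupling with Proposition\refer{inegfondvroticity2D3D} in the final section.

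The paper's mechanism is different and essential: it exploits that the inner product is against~$\p_3v^3$ itself. By an anisotropic duality split (Lemma\refer{estimdivhlemme1}),
\[
\bigl|(A(D)(fg)\,|\,\p_3v^3)_{\cH^{\theta,r}}\bigr|\lesssim\|fg\|_{\dH^{-6\al(r)+2\th,\,\frac12-\frac2p-2\th}}\,\|\p_3v^3\|_{\dH^{0,\,-\frac12+\frac2p}},
\]
and then $\|\p_3v^3\|_{\dH^{0,\,-\frac12+\frac2p}}\lesssim\|v^3\|_{\dH^{\frac12+\frac2p}}$. Applied with~$f,g$ of the form~$\p_{\rm h} v^{\rm h}_{\rm curl}$ this gives directly
\[
\lesssim\|v^3\|_{\dH^{\frac12+\frac2p}}\,\|\om\|^2_{\dH^{\frac12-3\al(r)+\th,\,\frac12-\frac1p-\th}}\lesssim\|v^3\|_{\dH^{\frac12+\frac2p}}\,\bigl\|\om_{\frac r2}\bigr\|_{L^2}^{2(2\al(r)+\frac1p)}\bigl\|\na\om_{\frac r2}\bigr\|_{L^2}^{\frac2{p'}},
\]
which is exactly the first source term in~\eqref{b.4bqp}, \emph{with} a factor of~$\|v^3\|$ and with \emph{no} absorption into the dissipation for this piece. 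The~$v^3$ in the source does not come from one of the two factors of the bilinear form: it is extracted from the test function~$\p_3v^3$. This is the point your outline misses, and it is precisely what keeps the estimate quadratic rather than quartic in~$\om$.
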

The proof of this proposition is different from the one of the
analogous proposition in\ccite{CZ5}. In the framework of that
article, only laws of product  of quantities related to scaling $0$
were used. Here, two different scalings are involved and we  do use
the structure of the transport  term  ~$v\cdot \nabla $ to prove
propagation estimate in a quasi-linear spirit. The term~$(v^{\rm
h}\cdot \nabla_{\rm h}
\partial_3v^3|\partial_3v^3)_{\cH^{\theta,r}}$ requires a particular
care (see forthcoming Lemma\refer{estimatedivhdemoeq3}). The proof
of Proposition\refer{estimadivhaniso}  is the purpose of the fifth
section.

\medbreak A non-standard Gronwall type argument allows to deduce
from Propositions\refer{inegfondvroticity2D3D}
and\refer{estimadivhaniso}
 that we control the quantities
\beq
\label{listquantities}
\|\om\|_{L^\infty_t(L^{r})},\quad
 \int_0^t\|\nabla \om_{\f r 2}(t')\|_{L^2}^2\,dt'\,,\ \int_0^t \|\p^2_3v^3(t')
\|^{2}_{\cH^{\theta,r}}\,dt' \andf \int_0^t\|v^3(t')\|_{\dH^{\frac 1
2   +\frac 2 p }}^pdt'\,. \eeq Let us also point out  that these
quantities have different scaling; the quantity
$$
\int_0^t\|v^3(t')\|_{\dH^{\frac 1 2   +\frac 2 p }}^pdt'
$$
is scaling invariant and   the quantities
$$
\|\om\|_{L^\infty_t(L^{r})},\quad
 \int_0^t\|\nabla \om_{\f r 2}(t')\|_{L^2}^2\,dt'\andf\ \int_0^t \|\p^2_3v^3(t')
\|^{2}_{\cH^{\theta,r}}\,dt'
$$
are the scaling of the norm~$L^\infty_t(\dot H^{1-3\alpha(r)})$.
Biot-Savart law in the horizontal variable allows to prove   that all
the above quantities  in\refeq{listquantities} prevents the solution
$v$ of $(NS)$ from blowing up.  The details of all this is the
purpose of  the last section.

\setcounter{equation}{0} \section{Non linear inequalities and
Littlewood-Paley analysis}
 In this section,  we recall or prove estimates that will be useful later on and recall the basics of anisotropic Littlewood-Paley theory.
 As a warm up, let us establish some
Sobolev type inequalities which involve the regularities of
~$a_{\frac r2}$ and~$\nabla a_{\frac r2}$ in~$L^2$ which are
relevant to Lemma\refer{estimpropagationLp}.

\begin{lem}
\label{BiotSavartomega}
{\sl For $r$ in~ $]3/2,2[,$ we have
\beq
\label{estimbasomega34}
 \|\nabla a\|_{L^{r}} \lesssim \bigl\|\nabla a_{\frac{r}2}\bigr\|_{L^2}
 \bigl\|a_{\frac{r}2}  \bigr\|_{L^2}^{\frac 2 {r}   -1}.
 \eeq
Moreover, for~$s$  in $\ds[-3\al(r)\,\virgp\, 1-\al(r)]$, we have
\beq
\label{estimbasomega34.0}
\|a\|_{\dH^s} \leq C
\|a_{\frac{r}2}\|_{L^2} ^{1-\al(r)-s}  \|\nabla
a_{\frac{r}2}\|_{L^2} ^{3\al(r)+s} .
 \eeq }
\end{lem}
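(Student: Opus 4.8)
**The plan is to prove Lemma 3.1 (labelled \texttt{BiotSavartomega}) by reducing both inequalities to the classical Gagliardo--Nirenberg--Sobolev embedding applied to the function $a_{r/2}$.** The key identity to exploit is that $a_{r/2}=\frac{a}{|a|}|a|^{r/2}$, so that $|a_{r/2}|=|a|^{r/2}$ and, at least formally, $\nabla a_{r/2}=\frac r2 |a|^{r/2-1}\nabla a$ (up to the sign factor, which does not affect moduli), hence $|\nabla a_{r/2}|=\frac r2 |a|^{r/2-1}|\nabla a|$ a.e. This is the bridge between norms of $a$ and norms of $a_{r/2}$. For the first estimate~\eqref{estimbasomega34}, I would write
\beno
\|\nabla a\|_{L^r}^r=\int_{\R^3}|\nabla a|^r\,dx
=\Bigl(\tfrac 2r\Bigr)^r\int_{\R^3}\bigl(|a|^{r/2-1}|\nabla a|\bigr)^{r}|a|^{-r(r/2-1)/1}\cdots
\eeno
more precisely, I would split $|\nabla a|^r=\bigl(|a|^{\frac r2-1}|\nabla a|\bigr)^{?}\,|a_{r/2}|^{?}$ by choosing the exponents so that one factor is in $L^2$ (namely $|a|^{\frac r2-1}|\nabla a|\sim |\nabla a_{r/2}|$) and the other is a power of $|a_{r/2}|=|a|^{r/2}$; a Hölder inequality with a suitably chosen pair then yields $\|\nabla a\|_{L^r}\lesssim \|\nabla a_{r/2}\|_{L^2}^{\alpha}\|a_{r/2}\|_{L^2}^{\beta}$, and the dimensional (scaling) balance forces $\alpha=1$ and $\beta=\frac2r-1$, matching the claim. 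Actually the cleanest route is: $\nabla a = \frac 2r |a|^{1-\frac r2}\nabla a_{r/2}$ (away from $\{a=0\}$), so $|\nabla a| = \frac 2r |a_{r/2}|^{\frac 2r-1}|\nabla a_{r/2}|$, and then Hölder with exponents $\bigl(\frac{2}{2-r},2\bigr)$ on the product $|a_{r/2}|^{\frac2r-1}\cdot|\nabla a_{r/2}|$ gives exactly~\eqref{estimbasomega34}, after recognising that $|a_{r/2}|^{(\frac2r-1)\cdot\frac{2r}{2-r}}=|a_{r/2}|^{2}$.

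For the second estimate~\eqref{estimbasomega34.0}, I would first observe that it suffices to treat the two endpoints $s=-3\alpha(r)$ and $s=1-\alpha(r)$ and then interpolate: for fixed $a$ the map $s\mapsto \log\|a\|_{\dH^s}$ is convex, and the right-hand side is log-affine in $s$ with the correct endpoint values, so the general case follows by complex (or real) interpolation of homogeneous Sobolev spaces. Alternatively, one checks directly that both sides have the same scaling in $a$ under $a(\cdot)\mapsto a(\lambda\cdot)$ for every $s$ in the range, which fixes the exponents once the endpoints are known. At the endpoint $s=1-\alpha(r)$: since $1-\alpha(r)=1-(\frac1r-\frac12)=\frac32-\frac1r$, and $\dH^{1-\alpha(r)}(\R^3)$ embeds by Sobolev into (or rather, is reached from) $\dot W^{1,r}$ — indeed $\dot W^{1,r}\hookrightarrow \dH^{1-\alpha(r)}$ precisely because $\frac1r-\frac13=\frac12-\frac{1-\alpha(r)}{3}$ — we get $\|a\|_{\dH^{1-\alpha(r)}}\lesssim\|\nabla a\|_{L^r}$, and then~\eqref{estimbasomega34} finishes this endpoint (here the exponents become $\|a_{r/2}\|_{L^2}^{0}\|\nabla a_{r/2}\|_{L^2}^{1}$, consistent with $1-\alpha(r)-s=0$). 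At the endpoint $s=-3\alpha(r)$: by dual Sobolev embedding $L^{r/2}\hookrightarrow\dH^{-3\alpha(r)}$ (note $\alpha(r/2)=\frac2r-\frac12$ but the relevant embedding is $L^{r/2}\hookrightarrow \dot H^{-s}$ with $-s=\frac3{r/2}\cdot\frac12$-type bookkeeping; concretely $\dH^{3\alpha(r)}\hookrightarrow L^{(r/2)'}$ for $3\alpha(r)=\frac3r-\frac32$, whose dual gives $L^{r/2}\hookrightarrow\dH^{-3\alpha(r)}$), so $\|a\|_{\dH^{-3\alpha(r)}}\lesssim\|a\|_{L^{r/2}}=\|a_{r/2}\|_{L^2}^{2}\cdot$(power bookkeeping)$=\|a_{r/2}\|_{L^2}^{2/1}$... more carefully $\|a\|_{L^{r/2}}^{r/2}=\int|a|^{r/2}\cdot$ wait: $\|a\|_{L^{r/2}}^{r/2}=\int|a|^{r/2}$ is \emph{not} right since $\|a\|_{L^{r/2}}=\bigl(\int|a|^{r/2}\bigr)^{2/r}=\bigl(\int|a_{r/2}|\bigr)^{2/r}$, hmm that is an $L^1$ norm of $a_{r/2}$, not $L^2$; the correct statement is $\|a_{r/2}\|_{L^2}^2=\int|a_{r/2}|^2=\int|a|^{r}=\|a\|_{L^r}^r$. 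So the endpoint $s=-3\alpha(r)$ should instead be read off from $\|a\|_{\dH^{-3\alpha(r)}}\lesssim \|a\|_{L^r}=\|a_{r/2}\|_{L^2}^{2/r}$ — which matches $1-\alpha(r)-s=1-\alpha(r)+3\alpha(r)=1+2\alpha(r)=\frac2r$ and $3\alpha(r)+s=0$. Good, that is the correct bookkeeping. [The embedding $\dH^{3\alpha(r)}\hookrightarrow L^{r'}$ holds since $\frac1{r'}=\frac12-\frac{3\alpha(r)}{3}$.]

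\textbf{The main obstacle} is the low-regularity justification of the chain rule $\nabla a_{r/2}=\frac r2|a|^{r/2-1}\nabla a$ and the manipulation of $|a|^{r/2-1}$ near the zero set $\{a=0\}$, since $r/2-1<0$ for $r<2$. The clean way around this is to work with $a_{\varepsilon}:=(|a|^2+\varepsilon^2)^{1/2}$-type regularisations, establish the inequalities for the smooth truncated quantities with constants independent of $\varepsilon$, and pass to the limit — exactly the kind of approximation argument already invoked for Lemma~\ref{estimpropagationLp} (cf.~\cite{CZ5}, Lemma~3.1); alternatively one invokes a known Gagliardo--Nirenberg inequality for powers (sometimes attributed to the same circle of ideas) directly. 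The other mild subtlety is confirming that the exponent range $[-3\alpha(r),1-\alpha(r)]$ is exactly the range of $s$ for which the interpolation endpoints are legitimate Sobolev embeddings in $\R^3$, i.e. that $-3\alpha(r)>-3/2$ and $1-\alpha(r)<3/2$, both of which hold for $r\in\,]3/2,2[$; I would record this check explicitly. Everything else is routine Hölder and scaling bookkeeping.
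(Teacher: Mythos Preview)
Your argument for the first inequality~\eqref{estimbasomega34} is correct and essentially identical to the paper's: write $|\nabla a|=\frac2r|a_{r/2}|^{\frac2r-1}|\nabla a_{r/2}|$ and apply H\"older. The lower endpoint $s=-3\al(r)$ of~\eqref{estimbasomega34.0} is also fine once you corrected yourself to use $L^r\hookrightarrow\dH^{-3\al(r)}$ (not $L^{r/2}$), giving $\|a\|_{\dH^{-3\al(r)}}\lesssim\|a\|_{L^r}=\|a_{r/2}\|_{L^2}^{2/r}$, which matches the exponent $1-\al(r)-s=1+2\al(r)=2/r$.

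There is, however, a genuine error at the upper endpoint $s=1-\al(r)$. Your claimed embedding $\dot W^{1,r}\hookrightarrow\dH^{1-\al(r)}$ is false: the identity $\frac1r-\frac13=\frac12-\frac{1-\al(r)}3$ you wrote holds only at $r=2$, and in general the dual Sobolev embedding $L^r\hookrightarrow\dH^{-3\al(r)}$ gives only $\|a\|_{\dH^{1-3\al(r)}}\lesssim\|\nabla a\|_{L^r}$, which is strictly weaker than what is needed since $1-3\al(r)<1-\al(r)$ for $r<2$. Moreover, even granting a correct embedding, feeding~\eqref{estimbasomega34} in would produce $\|\nabla a_{r/2}\|_{L^2}\|a_{r/2}\|_{L^2}^{2/r-1}$, whereas the target at this endpoint is $\|\nabla a_{r/2}\|_{L^2}^{2/r}$ with \emph{no} factor of $\|a_{r/2}\|_{L^2}$ (the exponent $3\al(r)+s$ equals $2/r$, not $1$).

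The missing idea, which the paper supplies, is to change the Lebesgue exponent on $\nabla a$. Using the same pointwise identity $|\nabla a|=\frac2r|\nabla a_{r/2}|\,|a_{r/2}|^{\frac2r-1}$ together with H\"older and the 3D Sobolev embedding $\|a_{r/2}\|_{L^6}\lesssim\|\nabla a_{r/2}\|_{L^2}$, one gets
\[
\|\nabla a\|_{L^{\frac{3r}{1+r}}}\lesssim\|\nabla a_{r/2}\|_{L^2}\,\|a_{r/2}\|_{L^6}^{\frac2r-1}\lesssim\|\nabla a_{r/2}\|_{L^2}^{2/r}.
\]
Since $\frac{3r}{1+r}<2$, the embedding $L^{\frac{3r}{1+r}}\hookrightarrow\dot B^0_{\frac{3r}{1+r},2}$ together with Bernstein yields $\|a\|_{\dH^{1-\al(r)}}\lesssim\|\nabla a\|_{L^{\frac{3r}{1+r}}}$, and this closes the endpoint. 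Interpolation between the two endpoints then finishes the proof exactly as you outlined.
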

\begin{proof}
Notice that due to \eqref{a.10wl}, \beno |\na a |&=&\f2r|\na
a_{\f{r}2}|\,|a|^{1-\f{r}2}\\
&=& \f2r|\na a_{\f{r}2}|\,|a_{\f{r}2}|^{\frac 2 {r}   -1}, \eeno then we get
\eqref{estimbasomega34} by using H\"older inequality. The dual
Sobolev inequality claims that \beq \label{BiotSavartomegademoeq1}
\|a\|_{\dH^{-3\al(r)}} \leq C \|a\|_{L^{r}}= C
\|a_{\frac{r}2}\|_{L^2} ^{\frac 2 {r}   }. \eeq Moreover, using again that $\ds
|\na a |=\f2r|\na a_{\f{r}2}|\,|a_{\f r2}|^{\frac 2 {r}   -1},$ H\"older
inequality implies that \ben
\|\nabla a \|_{L^{\frac{3r}{1+r}}} & \leq &  \f2r\|\nabla a_{\frac{r}2}\|_{L^2} \|\,a_{\f r2}\|_{L^{6}}^{\frac 2 {r}   -1}\nonumber\\
& \lesssim &    \|\nabla a_{\frac{r}2}\|_{L^2}^{\frac 2 {r}
}.\label{qw.3} \een Since $r<2,$ we have $\f{3r}{1+r}<2.$ Then
Theorem 2.40 of \cite{BCD} ensures that
$L^{\f{3r}{1+r}}\hookrightarrow\dot{B}^0_{\f{3r}{1+r},2},$ which
along with Bernstein's inequality implies
$$
\|a\|_{\dH^{1-\al(r)}}\lesssim \|\na a\|_{\dot{B}^0_{\f{3r}{1+r},2}}
\lesssim  \|\nabla a_{\f{r}2}\|_{L^2}^{\frac 2 {r}   },
$$
from which and \eqref{BiotSavartomegademoeq1}, we conclude the proof
of \eqref{estimbasomega34.0} and hence the lemma by using
interpolation inequality between~$\dH^s$ S  obolev spaces.
\end{proof}
As we shall use the anisotropic Littlewood-Paley theory, we recall
the functional space framework we are going to use in this section.
As in\ccite{CDGG}, \ccite{CZ1}, \ccite{Pa02} and \ccite{CZ5}, the
definitions of the spaces we are going to work with require
anisotropic dyadic decomposition   of the Fourier variables. Let us
recall from \cite{BCD} that \beq
\begin{split}
&\Delta_k^{\rm h}a=\cF^{-1}(\varphi(2^{-k}|\xi_{\rm h}|)\widehat{a}),\qquad
\Delta_\ell^{\rm v}a =\cF^{-1}(\varphi(2^{-\ell}|\xi_3|)\widehat{a}),\\
&S^{\rm h}_ka=\cF^{-1}(\chi(2^{-k}|\xi_{\rm h}|)\widehat{a}),
\qquad\ S^{\rm v}_\ell a =
\cF^{-1}(\chi(2^{-\ell}|\xi_3|)\widehat{a})
 \quad\mbox{and}\\
&\Delta_ja=\cF^{-1}(\varphi(2^{-j}|\xi|)\widehat{a}),
 \qquad\ \
S_ja=\cF^{-1}(\chi(2^{-j}|\xi|)\widehat{a}), \end{split}
\label{1.0}\eeq where $\xi_{\rm h}=(\xi_1,\xi_2),$ $\cF a$ and
$\widehat{a}$ denote the Fourier transform of the distribution $a,$
$\chi(\tau)  $ and~$\varphi(\tau)$ are smooth functions such that
 \beno
&&\Supp \varphi \subset \Bigl\{\tau \in \R\,/\  \ \frac34 \leq
|\tau| \leq \frac83 \Bigr\}\andf \  \ \forall
 \tau>0\,,\ \sum_{j\in\Z}\varphi(2^{-j}\tau)=1,\\
&&\Supp \chi \subset \Bigl\{\tau \in \R\,/\  \ \ |\tau|  \leq
\frac43 \Bigr\}\quad \ \ \ \andf \  \ \, \chi(\tau)+ \sum_{j\geq
0}\varphi(2^{-j}\tau)=1.
 \eeno

\begin{defi}\label{def4.1}
{\sl  Let $(p,r)$ be in~$[1,+\infty]^2$ and~$s$ in~$\R$. Let us consider~$u$ in~${\mathcal
S}_h'(\R^3),$ which means that $u$ is in~$\cS'(\R^3)$ and satisfies~$\ds\lim_{j\to-\infty}\|S_ju\|_{L^\infty}=0$. We set
$$
\|u\|_{\dB^s_{p,r}}\eqdefa\big\|\big(2^{js}\|\Delta_j
u\|_{L^{p}}\big)_j\bigr\|_{\ell ^{r}(\ZZ)}.
$$
\begin{itemize}

\item
For $s<\frac{3}{p}$ (or $s=\frac{3}{p}$ if $r=1$), we define $
\dB^s_{p,r}(\R^3)\eqdefa \big\{u\in{\mathcal S}_h'(\R^3)\;\big|\; \|
u\|_{\dB^s_{p,r}}<\infty\big\}.$

\item
If $k$ is  a positive integer and if~$\frac{3}{p}+k\leq
s<\frac{3}{p}+k+1$ (or $s=\frac{3}{p}+k+1$ if $r=1$), then we
define~$ \dB^s_{p,r}(\R^3)$  as the subset of distributions $u$
in~${\mathcal S}_h'(\R^3)$ such that $\partial^\beta u$ belongs to~$
\dB^{s-k}_{p,r}(\R^3)$ whenever $|\beta|=k.$
\end{itemize}
}
\end{defi}
We remark that in the particular case when $p=r=2,$ $\dB^s_{p,r}$
coincides with the classical homogeneous Sobolev spaces $\dH^s$.
Likewise, we can also define Besov spaces in the inhomogeneous
context  (see  \ccite{BCD} for instance).

\medbreak
The description of the regularity of~$\om_{r-1}$ in terms of Besov spaces will be useful. This is done thanks to the following lemma (see\ccite{CZ5}, Lemma 5.1).
\begin{lem}
 \label{puisancealphaBesov}
 {\sl Let~$(s,\al)$ be in~$]0,1[^2$ and~$(p,q)$ in~$[1,\infty]^2$.  We consider a function~$G$ from $\R$ to~$\R$ which is H\"olderian
  of exponent~$\al$. Then
 for any~$a$ in the Besov space~$ \dB^s_{p,q},$ one has
 $$
 \|G(a)\|_{\dB^{\al s}_{\frac p\al,\frac q\al} }\lesssim \|G\|_{C^\al} \bigl(\|a\|_{\dB^s_{p,q}}\bigr)^\al\with
\|G\|_{C^\al} \eqdefa \sup_{r\not =r' } \frac
{|G(r)-G(r')|}{|r-r'|^\al} \,\cdotp
 $$
 }
 \end{lem}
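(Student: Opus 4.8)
The plan is to derive this from the classical description of homogeneous Besov spaces of regularity index in $]0,1[$ in terms of translation-differences, combined with the defining pointwise inequality for $\|G\|_{C^\al}$. First I would normalize so that $G(0)=0$: subtracting the constant $G(0)$ from $G$ changes neither the Hölder seminorm $\|G\|_{C^\al}$ nor the value of $G(a)$ modulo an additive constant, which is annihilated by every homogeneous block $\Delta_j$; hence one may assume $|G(t)|\le\|G\|_{C^\al}|t|^\al$ for all $t\in\R$. Along the way one must check that $G(a)$ is a legitimate object for the homogeneous Besov norm, i.e. a tempered distribution in $\cS_h'(\R^3)$ whose dyadic blocks lie in $L^{p/\al}$; this follows from $s>0$, since then $a\in\dB^s_{p,q}$ is locally in $L^p$ and, because also $s<1$, has $L^p$-integrable translation-differences, so that $G(a)$ inherits the corresponding $L^{p/\al}$ properties. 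I regard this preliminary verification as the only genuinely technical point, and it is classical (see \cite{BCD}; it is exactly what is checked in \cite{CZ5}).

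Next I would invoke the equivalence, valid for $\sigma\in]0,1[$ and $(P,Q)\in[1,\infty]^2$ and with $d\mu(y)=|y|^{-3}dy$,
\beq
\|u\|_{\dB^{\sigma}_{P,Q}}\ \sim\ \Bigl\|\,|y|^{-\sigma}\,\|u(\cdot+y)-u(\cdot)\|_{L^P(\R^3)}\,\Bigr\|_{L^Q(d\mu)}
\eeq
(with the usual modification when $Q=\infty$). Since $\al,s\in]0,1[$ forces $\al s\in]0,1[$, this equivalence applies both to $\dB^s_{p,q}$ and to the target space $\dB^{\al s}_{p/\al,q/\al}$, so it suffices to bound the right-hand side above for $u=G(a)$, $\sigma=\al s$, $P=p/\al$, $Q=q/\al$.

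The remaining computation is then routine: for each fixed $y$ the Hölder bound gives $|G(a(x+y))-G(a(x))|\le\|G\|_{C^\al}\,|a(x+y)-a(x)|^\al$ for a.e.\ $x$; raising to the power $p/\al$ and integrating in $x$ yields $\|G(a(\cdot+y))-G(a(\cdot))\|_{L^{p/\al}}\le\|G\|_{C^\al}\,\|a(\cdot+y)-a(\cdot)\|_{L^p}^{\al}$. Inserting this into the difference formula for $\dB^{\al s}_{p/\al,q/\al}$ and using the elementary identity $\|f^\al\|_{L^{q/\al}(d\mu)}=\|f\|_{L^q(d\mu)}^{\al}$ with $f(y)=|y|^{-s}\|a(\cdot+y)-a(\cdot)\|_{L^p}$ turns the $\dB^{\al s}$-quantity of $G(a)$ into $\|G\|_{C^\al}$ times the $\al$-th power of the $\dB^s$-quantity of $a$, i.e. into $\|G\|_{C^\al}\|a\|_{\dB^s_{p,q}}^{\al}$ up to the implicit constants. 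An alternative would be to estimate $\Delta_j(G(a))$ directly by a telescoping argument over dyadic scales, but the translation-difference route is shorter and is the one I would follow; in either case the only real obstacle is the preliminary check that $G(a)$ is a well-defined element of $\cS_h'(\R^3)$, the rest being the combination of Hölder's inequality with the difference characterization of Besov norms.
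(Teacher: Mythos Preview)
Your argument is correct and is the standard route: the translation-difference characterization of~$\dB^{\sigma}_{P,Q}$ for~$\sigma\in]0,1[$ together with the pointwise H\"older bound and the identity $\|f^{\al}\|_{L^{q/\al}(d\mu)}=\|f\|_{L^{q}(d\mu)}^{\al}$ does the job exactly as you describe. Note however that the present paper does not itself supply a proof of this lemma; it simply quotes it from\ccite{CZ5}, Lemma~5.1, so there is no independent argument here to compare against. Your proof is precisely the one carried out in that reference, so you are reproducing rather than diverging from the intended argument.
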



\medbreak

Similar to Definition \ref{def4.1}, we can also define the
homogeneous anisotropic Besov space.

\begin{defi}\label{anibesov}
{\sl Let us define the space $\bigl(\dB^{s_1}_{p,q_1}\bigr)_{\rm
h}\bigl(\dB^{s_2}_{p,q_2}\bigr)_{\rm v}$ as the space of
distribution in~$\cS'_h$  such that
$$
\|u\|_{\bigl(\dB^{s_1}_{p,q_1}\bigr)_{\rm
h}\bigl(\dB^{s_2}_{p,q_2}\bigr)_{\rm v}}\eqdefa \biggl(\sum_{k\in\Z}
2^{q_1ks_1} \Bigl(\sum_{\ell\in\Z}2^{q_2\ell s_2}\|\D_k^{\rm
h}\D_\ell^{\rm
v}u\|_{L^p}^{q_2}\Bigr)^{{q_1}/{q_2}}\biggr)^{1/{q_1}}
$$
is finite.
}
\end{defi}
We remark that when $p=q_1=q_2=2,$ the anisotropic Besov space
$\bigl(\dB^{s_1}_{p,q_1}\bigr)_{\rm
h}\bigl(\dB^{s_2}_{p,q_2}\bigr)_{\rm v}$ coincides with the
classical homogeneous anisotropic Sobolev space $\dH^{s_1,s_2}$ and
thus the space~$\bigl(\dB^{-3\al(r)+\theta}_{2,2}\bigr)_{\rm
h}\bigl(\dB^{-\theta}_{2,2}\bigr)_{\rm v}$ is the space~$\cH^{\theta,r}$
defined in Definition\refer{def2.1ad}. Let us also remark that in
the case when
 $q_1$ is   different from~$q_2$, the order of summation is important.

\medbreak
 For the
convenience of the readers, we recall the following anisotropic
Bernstein type lemma from \cite{CZ1, Pa02}:

\begin{lem}
\label{lemBern}
{\sl Let $\cB_{h}$ (resp.~$\cB_{v}$) a ball
of~$\R^2_{h}$ (resp.~$\R_{v}$), and~$\cC_{h}$ (resp.~$\cC_{v}$) a
ring of~$\R^2_{h}$ (resp.~$\R_{v}$); let~$1\leq p_2\leq p_1\leq
\infty$ and ~$1\leq q_2\leq q_1\leq \infty.$ Then there holds:

\smallbreak\noindent If the support of~$\wh a$ is included
in~$2^k\cB_{h}$, then
\[
\|\partial_{x_{\rm h}}^\alpha a\|_{L^{p_1}_{\rm h}(L^{q_1}_{\rm v})}
\lesssim 2^{k\left(|\al|+2\left(1/{p_2}-1/{p_1}\right)\right)}
\|a\|_{L^{p_2}_{\rm h}(L^{q_1}_{\rm v})}.
\]
If the support of~$\wh a$ is included in~$2^\ell\cB_{v}$, then
\[
\|\partial_{x_3}^\beta a\|_{L^{p_1}_{\rm h}(L^{q_1}_{\rm v})}
\lesssim 2^{\ell\left(\beta+(1/{q_2}-1/{q_1})\right)} \|
a\|_{L^{p_1}_{\rm h}(L^{q_2}_{\rm v})}.
\]
If the support of~$\wh a$ is included in~$2^k\cC_{h}$, then
\[
\|a\|_{L^{p_1}_{\rm h}(L^{q_1}_{\rm v})} \lesssim
2^{-kN}\sup_{|\al|=N} \|\partial_{x_{\rm h}}^\al a\|_{L^{p_1}_{\rm
h}(L^{q_1}_{\rm v})}.
\]
If the support of~$\wh a$ is included in~$2^\ell\cC_{v}$, then
\[
\|a\|_{L^{p_1}_{\rm h}(L^{q_1}_{\rm v})} \lesssim 2^{-\ell N}
\|\partial_{x_3}^N a\|_{L^{p_1}_{\rm h}(L^{q_1}_{\rm v})}.
\]
}
\end{lem}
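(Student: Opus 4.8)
The plan is to deduce all four inequalities from the standard isotropic Bernstein estimate applied in one group of variables (see \cite{BCD}, Lemma~2.1), the only additional ingredient being Minkowski's integral inequality, which lets one carry a convolution acting in one set of variables through the anisotropic norm~$L^{p}_{\rm h}(L^{q}_{\rm v})$. The device I would use repeatedly is the following: if~$g=g(x_{\rm h})$, if~$1\leq p_2\leq p_1\leq\infty$, $1\leq q_1\leq\infty$ and~$\frac1m\eqdefa 1+\frac1{p_1}-\frac1{p_2}$, then, setting~$F(x_{\rm h})\eqdefa\|f(x_{\rm h},\cdot)\|_{L^{q_1}_{\rm v}}$, Minkowski's inequality in~$x_{\rm h}$ gives~$\|(f\ast_{\rm h}g)(x_{\rm h},\cdot)\|_{L^{q_1}_{\rm v}}\leq(|g|\ast_{\rm h}F)(x_{\rm h})$, whence by Young's inequality~$\|f\ast_{\rm h}g\|_{L^{p_1}_{\rm h}(L^{q_1}_{\rm v})}\leq\|g\|_{L^m_{\rm h}}\|f\|_{L^{p_2}_{\rm h}(L^{q_1}_{\rm v})}$; the vertical counterpart, with~$g=g(x_3)$ and the convolution acting on~$x_3$, is obtained in the same spirit (there one only needs Young's inequality in~$x_3$ for fixed~$x_{\rm h}$, followed by the~$L^{p_1}_{\rm h}$ norm).

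For the first estimate I would pick~$\widetilde\varphi\in\cS(\R^2)$ equal to~$1$ on a neighbourhood of~$\cB_{\rm h}$, so that~$\Supp\wh a\subset2^k\cB_{\rm h}$ forces~$\partial_{x_{\rm h}}^\alpha a=(\partial^\alpha g_k)\ast_{\rm h}a$ with~$g_k(x_{\rm h})\eqdefa 2^{2k}(\cF^{-1}\widetilde\varphi)(2^kx_{\rm h})$. A change of variables gives~$\|\partial^\alpha g_k\|_{L^m_{\rm h}}=2^{k(|\alpha|+2(1-1/m))}\|\partial^\alpha\cF^{-1}\widetilde\varphi\|_{L^m_{\rm h}}$, and choosing~$m$ so that~$1-1/m=1/p_2-1/p_1$ the device above produces exactly the claimed power~$2^{k(|\alpha|+2(1/p_2-1/p_1))}$. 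The second estimate is the same argument with~$\widetilde\varphi\in\cS(\R)$, the convolution now acting on the vertical variable and~$m$ chosen by~$1-1/m=1/q_2-1/q_1$.

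For the third estimate I would choose~$\Theta\in\cS(\R^2)$ supported in a ring and equal to~$1$ on~$\cC_{\rm h}$; using the multinomial identity~$|\eta|^{2N}=\sum_{|\alpha|=N}\frac{N!}{\alpha!}\,\eta^{2\alpha}$ one may write, on~$\Supp\Theta$, $\Theta(\eta)=\sum_{|\alpha|=N}\eta^\alpha\phi_\alpha(\eta)$ with~$\phi_\alpha\eqdefa\frac{N!}{\alpha!}\,\eta^\alpha|\eta|^{-2N}\Theta\in\cS(\R^2)$, which is well defined since~$\Theta$ vanishes near the origin. Since~$\wh a=\Theta(2^{-k}\cdot)\wh a$, this yields~$a=2^{-kN}\sum_{|\alpha|=N}c_\alpha\,h_{\alpha,k}\ast_{\rm h}\partial_{x_{\rm h}}^\alpha a$ for harmless constants~$c_\alpha$ and kernels~$h_{\alpha,k}(x_{\rm h})\eqdefa 2^{2k}(\cF^{-1}\phi_\alpha)(2^kx_{\rm h})$ with~$\|h_{\alpha,k}\|_{L^1_{\rm h}}$ independent of~$k$; Young's inequality in~$x_{\rm h}$ and Minkowski's inequality in~$x_3$ then close the estimate. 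The fourth estimate is its one-dimensional analogue: on a ring of~$\R$ one simply factors~$\Theta(\eta)=\eta^N\phi(\eta)$ with~$\phi\eqdefa\eta^{-N}\Theta\in\cS(\R)$ and proceeds with a convolution in~$x_3$.

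None of these steps presents a genuine obstacle; the only point requiring a little care is the bookkeeping in the third estimate, namely writing the ring cut-off~$\Theta$ as a finite sum of the form~$\sum_{|\alpha|=N}\eta^\alpha\phi_\alpha(\eta)$ with Schwartz~$\phi_\alpha$, so that exactly~$N$ horizontal derivatives are factored out while the corresponding kernels keep a scale-invariant~$L^1_{\rm h}$ norm — but this is entirely classical.
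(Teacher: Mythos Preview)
Your argument is correct and is precisely the classical route to anisotropic Bernstein estimates: apply the one-variable Bernstein machinery (convolution against a scaled Schwartz kernel, Young's inequality) in the relevant group of variables, and thread the inner norm through via Minkowski's integral inequality. The decomposition $\Theta(\eta)=\sum_{|\alpha|=N}\eta^\alpha\phi_\alpha(\eta)$ for the ring cut-off is the standard way to factor out exactly $N$ derivatives while keeping scale-invariant $L^1$ kernels.

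Note, however, that the paper does not prove this lemma at all: it is merely \emph{recalled} from \cite{CZ1, Pa02} (see the sentence preceding the lemma). So there is no ``paper's own proof'' to compare against; your write-up simply supplies what the authors chose to cite.
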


\medbreak As a corollary of Lemma \ref{lemBern}, we have the
following inequality, if~$1\leq p_2\leq p_1$,
\beq
\label{inclusionSobolevtypeaniso}
\|a\|_{\bigl(\dB^{s_1-2\left(\frac 1{p_2}- \frac 1{p_1}\right)}_{p_1,q_1}\bigr)_{\rm h}
\bigl(\dB^{s_2-\left(\frac 1 {p_2}-\frac 1
{p_1}\right)}_{p_1,q_2}\bigr)_{\rm v}} \lesssim
\|a\|_{\bigl(\dB^{s_1}_{p_2,q_1}\bigr)_{\rm
h}\bigl(\dB^{s_2}_{p_2,q_2}\bigr)_{\rm v}}.
\eeq

\medbreak

 To consider the
product of a distribution in the isotropic Besov space with a
distribution in the anisotropic Besov space, we need the following
result which allows to embed  isotropic Besov spaces into the
anisotropic ones.
\begin{lem}[Lemma 4.2 of \cite{CZ5}]\label{embeda}
{\sl Let $s$ be a positive real number and $(p,q)$ in~$ [1,\infty]$
with~$p$ greater than or equal to~$q.$ Then one has \beno
\|a\|_{L^p_{\rm h}\bigl((\dB^s_{p,q})_{\rm v}\bigr)}\lesssim
\|a\|_{\dB^s_{p,q}}. \eeno }
\end{lem}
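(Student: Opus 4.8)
The plan is to reduce the mixed‑norm bound to a one‑dimensional statement about the vertical dyadic blocks, to use the hypothesis $p\geq q$ exactly once, and then to close with an almost‑orthogonality plus geometric‑summation argument; the whole thing is short. First I would unwind the definition and exchange the $\ell^q$ and $L^p_{\rm h}$ norms: since
\[
\|a\|_{L^p_{\rm h}\bigl((\dB^s_{p,q})_{\rm v}\bigr)}
=\biggl\|\Bigl(\sum_{\ell\in\Z}2^{q\ell s}\|\D_\ell^{\rm v}a(x_{\rm h},\cdot)\|_{L^p_{\rm v}}^q\Bigr)^{1/q}\biggr\|_{L^p_{x_{\rm h}}},
\]
raising to the power $q$ exhibits the quantity inside $L^{p/q}_{x_{\rm h}}$ as a sum of the nonnegative functions $x_{\rm h}\mapsto 2^{q\ell s}\|\D_\ell^{\rm v}a(x_{\rm h},\cdot)\|_{L^p_{\rm v}}^q$, so the triangle inequality in $L^{p/q}_{x_{\rm h}}$ — valid precisely because $p\geq q$ — together with the Fubini identity $L^p_{\rm h}(L^p_{\rm v})=L^p(\R^3)$ gives
\[
\|a\|_{L^p_{\rm h}\bigl((\dB^s_{p,q})_{\rm v}\bigr)}
\leq\Bigl(\sum_{\ell\in\Z}2^{q\ell s}\|\D_\ell^{\rm v}a\|_{L^p(\R^3)}^q\Bigr)^{1/q}.
\]
Thus it suffices to dominate this last quantity by $\|a\|_{\dB^s_{p,q}}$.

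Next I would compare vertical blocks with isotropic ones. Write $a=\sum_{j\in\Z}\D_j a$, which converges in $\cS'$ because $a\in\cS_h'$. The support of $\widehat{\D_j a}$ lies in $\{|\xi|\sim 2^j\}$ while $\D_\ell^{\rm v}$ localizes in $\{|\xi_3|\sim 2^\ell\}$; since $|\xi_3|\leq|\xi|$, there is a fixed integer $N_0$ with $\D_\ell^{\rm v}\D_j a=0$ whenever $j<\ell-N_0$, hence $\D_\ell^{\rm v}a=\sum_{j\geq\ell-N_0}\D_\ell^{\rm v}\D_j a$. Since $\D_\ell^{\rm v}$ is a convolution in the variable $x_3$ alone against an $L^1$‑normalized kernel, it is bounded on $L^p(\R^3)$ with a constant independent of $\ell$ (Young's inequality, applied slice by slice in $x_{\rm h}$), so
\[
2^{\ell s}\|\D_\ell^{\rm v}a\|_{L^p}\lesssim\sum_{j\geq\ell-N_0}2^{(\ell-j)s}\,2^{js}\|\D_j a\|_{L^p}.
\]

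Finally, since $s>0$, the sequence $(c_k)_{k\in\Z}$ defined by $c_k=2^{ks}$ for $k\leq N_0$ and $c_k=0$ otherwise belongs to $\ell^1(\Z)$, with norm $2^{N_0 s}/(1-2^{-s})$; the right‑hand side above is exactly its discrete convolution with $(2^{js}\|\D_j a\|_{L^p})_j\in\ell^q(\Z)$. Young's inequality $\ell^1\ast\ell^q\hookrightarrow\ell^q$ then yields $\bigl(\sum_\ell 2^{q\ell s}\|\D_\ell^{\rm v}a\|_{L^p}^q\bigr)^{1/q}\lesssim\|a\|_{\dB^s_{p,q}}$, which combined with the first step is the claimed inequality.

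The only points that genuinely matter are the use of $p\geq q$ in the first step (without it the $\ell^q$ sum cannot be pulled out of $L^p_{x_{\rm h}}$) and the strict positivity $s>0$ in the last step (for $s\leq 0$ the geometric series over $j\geq\ell-N_0$ diverges, so the index‑convolution kernel fails to be summable); everything else is routine. I would also note in passing that the $\cS_h'$ hypothesis is what makes the homogeneous reconstruction $a=\sum_j\D_j a$ meaningful, and that the support restriction $j\geq\ell-N_0$ is simultaneously what guarantees $a$ lies in the anisotropic space in the vertical sense, so that the left‑hand norm is well defined to begin with.
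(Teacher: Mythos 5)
Your proof is correct; the paper itself does not prove this lemma but simply quotes it from \cite{CZ5}, and your argument (Minkowski's inequality in $L^{p/q}_{x_{\rm h}}$ using $p\geq q$, the spectral observation that $\D_\ell^{\rm v}\D_j a=0$ for $j<\ell-N_0$, uniform $L^p$--boundedness of $\D_\ell^{\rm v}$, and an $\ell^1\ast\ell^q$ convolution estimate using $s>0$) is exactly the standard route taken there. The only cosmetic point is that the ``raise to the power $q$'' step should be replaced by the trivial identity $L^\infty_{\rm h}(\ell^\infty)=\ell^\infty(L^\infty_{\rm h})$ when $q=\infty$ (which by hypothesis forces $p=\infty$).
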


\begin{lem}[Lemma 4.3 of \cite{CZ5}]
\label{isoaniso}
 {\sl For any~$s$ positive and any~$\theta$ in~$]0,s[$,
we have
$$
\|f\|_{(\dB^{s-\theta}_{p,q})_{\rm h}(\dB^{\theta}_{p,1})_{\rm v}}
\lesssim \|f\|_{\dB^{s}_{p,q}}.
$$
}
\end{lem}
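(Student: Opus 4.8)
The final statement to prove is Lemma~\ref{isoaniso}: for $s>0$ and $\theta \in \,]0,s[$, one has $\|f\|_{(\dB^{s-\theta}_{p,q})_{\rm h}(\dB^{\theta}_{p,1})_{\rm v}} \lesssim \|f\|_{\dB^{s}_{p,q}}$. The natural approach is to estimate the anisotropic Littlewood-Paley pieces $\Delta_k^{\rm h}\Delta_\ell^{\rm v} f$ directly in terms of the isotropic ones $\Delta_j f$, and then reorganize the sums so as to recover the anisotropic norm on the left from the isotropic norm on the right. First I would fix $k,\ell \in \Z$ and write $\Delta_k^{\rm h}\Delta_\ell^{\rm v} f = \sum_j \Delta_k^{\rm h}\Delta_\ell^{\rm v}\Delta_j f$. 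Because the Fourier support of $\Delta_j f$ is in the ball $|\xi| \lesssim 2^j$, the operator $\Delta_k^{\rm h}$ (which localizes $|\xi_{\rm h}| \sim 2^k$) forces $2^k \lesssim 2^j$, and $\Delta_\ell^{\rm v}$ forces $2^\ell \lesssim 2^j$; hence only indices $j \gtrsim \max(k,\ell)$ contribute. Using that the horizontal and vertical cutoffs are $L^p$-bounded uniformly in $k,\ell$, I get
\[
\|\Delta_k^{\rm h}\Delta_\ell^{\rm v} f\|_{L^p} \lesssim \sum_{j \geq \max(k,\ell) - N_0} \|\Delta_j f\|_{L^p}
\]
for some fixed integer $N_0$.

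**Reorganizing the sums.** Next I would plug this into the definition of $\|f\|_{(\dB^{s-\theta}_{p,q})_{\rm h}(\dB^{\theta}_{p,1})_{\rm v}}$, i.e. first take the $\ell^1$ sum in $\ell$ with weight $2^{\ell\theta}$, then the $\ell^q$ sum in $k$ with weight $2^{k(s-\theta)}$. For the inner sum, $\sum_{\ell} 2^{\ell\theta}\|\Delta_k^{\rm h}\Delta_\ell^{\rm v} f\|_{L^p} \lesssim \sum_\ell 2^{\ell\theta}\sum_{j \geq \max(k,\ell)-N_0}\|\Delta_j f\|_{L^p}$. Exchanging the order of summation, the $\ell$-sum runs over $\ell \leq j + N_0$ (and also $\ell$ constrained so that the term is nonzero), and $\sum_{\ell \leq j+N_0} 2^{\ell\theta} \lesssim 2^{j\theta}$ since $\theta>0$. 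This collapses the vertical sum and produces $\sum_{\ell} 2^{\ell\theta}\|\Delta_k^{\rm h}\Delta_\ell^{\rm v} f\|_{L^p} \lesssim \sum_{j \geq k - N_0} 2^{j\theta}\|\Delta_j f\|_{L^p}$. It then remains to take the $\ell^q(\Z)$ norm in $k$ of $2^{k(s-\theta)}$ times this quantity; writing $k = j - m$ with $m \geq -N_0$, this is a discrete convolution of the sequence $(2^{j s}\|\Delta_j f\|_{L^p})_j \in \ell^q$ with the summable sequence $(2^{-m(s-\theta)}\mathbf{1}_{m \geq -N_0})_m$ — here I use $s - \theta > 0$ — and Young's inequality for series gives the bound by $C\|(2^{js}\|\Delta_j f\|_{L^p})_j\|_{\ell^q} = C\|f\|_{\dB^s_{p,q}}$.

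**Where the care is needed.** The computation above is essentially bookkeeping; the one genuine subtlety is the interchange of the $j$-sum with the $\ell$- and $k$-sums, which must be justified (e.g. a priori on functions for which $\|f\|_{\dB^s_{p,q}}$ is finite and all series converge absolutely, Fubini–Tonelli for nonnegative terms applies directly). A second point to keep straight is the order of summation in the anisotropic norm: the inner index is the \emph{vertical} one with the $\ell^1$ summation (the $\theta$-weight), and this is exactly what lets the decaying geometric sum $\sum_{\ell \leq j+N_0}2^{\ell\theta}$ absorb cleanly; the two positivity conditions $\theta > 0$ and $s-\theta > 0$ are used one for each of the two summations, which is why the hypothesis is precisely $\theta \in \,]0,s[$. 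I would present the argument in this order: (1) the pointwise-in-$(k,\ell)$ bound on $\|\Delta_k^{\rm h}\Delta_\ell^{\rm v} f\|_{L^p}$ by a tail sum of $\|\Delta_j f\|_{L^p}$; (2) the vertical $\ell^1$-sum, collapsed via $\theta>0$; (3) the horizontal $\ell^q$-sum, handled by Young's inequality via $s-\theta>0$. The whole proof is short, and no step is a real obstacle — it is a standard Littlewood–Paley reorganization — so I would mainly make sure the Fourier-support comparisons and the direction of the geometric series are stated correctly.
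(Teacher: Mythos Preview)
Your argument is correct. The paper does not actually prove this lemma; it simply quotes it as Lemma~4.3 of~\cite{CZ5} and uses it as a black box. Your proof supplies a self-contained argument: the Fourier-support observation that $\Delta_k^{\rm h}\Delta_\ell^{\rm v}\Delta_j f$ vanishes unless $j\geq \max(k,\ell)-N_0$, followed by the two geometric summations (the vertical one using $\theta>0$, the horizontal one via Young's convolution inequality using $s-\theta>0$), is exactly the standard way to prove this embedding and is how the cited reference proceeds as well. Nothing is missing; the bookkeeping you flag (Fubini--Tonelli on nonnegative terms, the order of summation in Definition~\ref{anibesov}) is the only thing to be careful about and you have it right.
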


One of the main motivation of using anisotropic Besov space is the
proof of the following Proposition \ref{BiotSavartBesovaniso}, which
extends $r=3/2$ in \cite{CZ5} to general $r$ in~$ ]3/2,2[.$

\begin{lem}
\label{interpolHtheta} {\sl  Let us consider $\th$ in $]0,3\alpha(r)[$
and ~$\beta$
in~$]0, 1/2   [$.
Then we have
$$
\|a\|_{\bigl(\dB^{0}_{2,1}\bigr)_{\rm
h}\bigl(\dB^{1-3\alpha(r)-\beta}_{2,1}\bigr)_{\rm v}}\lesssim
\|a\|_{\cH^{\theta,r}}^{\beta}\|\nabla a\|_{\cH^{\theta,r}}^{1-\beta}.
$$
}
\end{lem}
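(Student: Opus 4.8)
The plan is to prove Lemma \ref{interpolHtheta} by interpolating, separately in the horizontal and vertical frequency annuli, between the two "endpoint" regularities carried by $\cH^{\theta,r}=\bigl(\dB^{-3\al(r)+\theta}_{2,2}\bigr)_{\rm h}\bigl(\dB^{-\theta}_{2,2}\bigr)_{\rm v}$ and by $\nabla$ applied to an element of this space. The point is that putting one derivative on $a$ improves the total regularity by one order, and we want to land in $\bigl(\dB^{0}_{2,1}\bigr)_{\rm h}\bigl(\dB^{1-3\al(r)-\beta}_{2,1}\bigr)_{\rm v}$, whose horizontal index $0$ and vertical index $1-3\al(r)-\beta$ both sit strictly between the corresponding indices of $\cH^{\theta,r}$ (namely $-3\al(r)+\theta$ and $-\theta$) and those of $\{\,b:\nabla b\in\cH^{\theta,r}\,\}$ (namely $1-3\al(r)+\theta$ and $1-\theta$); this is exactly where the hypotheses $\theta\in]0,3\al(r)[$ and $\beta\in]0,1/2[$ are used, together with the bookkeeping $(1-3\al(r)+\theta)-(-3\al(r)+\theta)=1$ and $(1-\theta)-(-\theta)=1$.

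First I would write, for $u=\D^{\rm h}_k\D^{\rm v}_\ell a$, the two Bernstein-type bounds coming from localization: on the horizontal side, $\|u\|_{L^2}\lesssim \min\bigl(2^{k(3\al(r)-\theta)},\,2^{-k(1-3\al(r)+\theta)}\|\nabla_{\rm h}u\|_{L^2}\bigr)$ type estimates obtained from the fact that $\D^{\rm h}_k$ localizes $\xi_{\rm h}$ in a ring $2^k\cC_{\rm h}$ (so both $|\xi_{\rm h}|$ and $|\xi_{\rm h}|^{-1}$ are comparable to $2^{\mp k}$ there), and similarly on the vertical side with $\D^{\rm v}_\ell$. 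Concretely I would introduce the nonnegative sequence $c_{k,\ell}\eqdefa 2^{(-3\al(r)+\theta)k}2^{-\theta\ell}\|\D^{\rm h}_k\D^{\rm v}_\ell a\|_{L^2}$, which by definition satisfies $\|(c_{k,\ell})\|_{\ell^2(\ZZ^2)}=\|a\|_{\cH^{\theta,r}}$, and the analogous sequence $d_{k,\ell}$ built from $\nabla a$ (so $\|(d_{k,\ell})\|_{\ell^2}=\|\nabla a\|_{\cH^{\theta,r}}$, and by spectral localization $d_{k,\ell}\simeq 2^{k}c_{k,\ell}+2^{\ell}c_{k,\ell}$, i.e. $\max(2^k,2^\ell)c_{k,\ell}\lesssim d_{k,\ell}$ up to constants). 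Then the generic dyadic block $\|\D^{\rm h}_k\D^{\rm v}_\ell a\|_{L^2}$ can be written as $2^{(3\al(r)-\theta)k}2^{\theta\ell}c_{k,\ell}$ and also bounded by $\min\bigl(2^{(3\al(r)-\theta)k}2^{\theta\ell}c_{k,\ell},\ 2^{(3\al(r)-\theta)k}2^{\theta\ell}2^{-\max(k,\ell)}d_{k,\ell}\bigr)$.

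Next I would estimate the target norm directly from its definition: $\|a\|_{\bigl(\dB^{0}_{2,1}\bigr)_{\rm h}\bigl(\dB^{1-3\al(r)-\beta}_{2,1}\bigr)_{\rm v}}=\sum_{k}\sum_{\ell}2^{(1-3\al(r)-\beta)\ell}\|\D^{\rm h}_k\D^{\rm v}_\ell a\|_{L^2}$ (since $p=q_1=q_2=2$ reduces the outer and inner $\ell^{q}$ to $\ell^1$ sums after pulling out the weights — wait, here $q_1=q_2=1$, so it is genuinely a double $\ell^1$ sum). For each pair $(k,\ell)$ I interpolate the block estimate with exponents $\beta$ and $1-\beta$: bound $\|\D^{\rm h}_k\D^{\rm v}_\ell a\|_{L^2}$ by the geometric mean $\bigl(2^{(3\al(r)-\theta)k}2^{\theta\ell}c_{k,\ell}\bigr)^{\beta}\bigl(2^{(3\al(r)-\theta)k}2^{\theta\ell}2^{-\max(k,\ell)}d_{k,\ell}\bigr)^{1-\beta}$ $= 2^{(3\al(r)-\theta)k}2^{\theta\ell}2^{-(1-\beta)\max(k,\ell)}c_{k,\ell}^{\beta}d_{k,\ell}^{1-\beta}$. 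Multiplying by the weight $2^{(1-3\al(r)-\beta)\ell}$ and applying Cauchy-Schwarz in $(k,\ell)$ reduces matters to showing that the remaining deterministic weight $W_{k,\ell}\eqdefa 2^{(3\al(r)-\theta)k}2^{(1-3\al(r)+\theta-\beta)\ell}2^{-(1-\beta)\max(k,\ell)}$ is in $\ell^2(\ZZ^2)$; splitting the sum into the regions $k\ge\ell$ and $k<\ell$, the exponent of $2^{k}$ (resp. $2^\ell$) becomes strictly negative precisely because $3\al(r)-\theta<1-\beta$ and $1-3\al(r)+\theta-\beta<1-\beta$, i.e. because $0<\theta$, $\theta<3\al(r)$ and $0<\beta<3\al(r)$ (the last forced by $\beta<1/2$ only when $3\al(r)>1/2$; in general one checks the three strict inequalities reduce exactly to $\theta\in]0,3\al(r)[$ and $\beta\in]0,1/2[$, since one also needs $3\al(r)-\theta-(1-\beta)<0$ which is $\beta<1-3\al(r)+\theta$, automatically satisfied). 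The main obstacle, and the step requiring real care, is this last arithmetic: making sure that in \emph{both} summation regimes $k\gtrless\ell$ the exponents of the geometric ratios are strictly negative under exactly the stated ranges of $\theta$ and $\beta$ — in particular tracking how the $\max(k,\ell)$ term redistributes the derivative gain between the two directions, and confirming the constant depends only on $r,\theta,\beta$. Once the weight summability is in hand, Cauchy-Schwarz gives $\|a\|_{\bigl(\dB^{0}_{2,1}\bigr)_{\rm h}\bigl(\dB^{1-3\al(r)-\beta}_{2,1}\bigr)_{\rm v}}\lesssim \|(c_{k,\ell})\|_{\ell^2}^{\beta}\|(d_{k,\ell})\|_{\ell^2}^{1-\beta}=\|a\|_{\cH^{\theta,r}}^{\beta}\|\nabla a\|_{\cH^{\theta,r}}^{1-\beta}$, which is the claim. (Strictly speaking one applies a three-exponent Hölder, $\beta+(1-\beta) = 1$, distributing $c_{k,\ell}^\beta d_{k,\ell}^{1-\beta}$ against the weight; this is routine.)
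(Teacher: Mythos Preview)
There is a genuine gap in your argument: the weight $W_{k,\ell}$ is \emph{not} in $\ell^2(\ZZ^2)$. Indeed, along the diagonal $k=\ell$ one computes directly
\[
W_{k,k}=2^{(3\al(r)-\theta)k}\,2^{(1-3\al(r)+\theta-\beta)k}\,2^{-(1-\beta)k}=2^{0}=1,
\]
so $\sum_k W_{k,k}^2=\infty$. More conceptually, the inequality you are trying to prove is scale-invariant (both sides transform identically under $a\mapsto a(\lambda\,\cdot)$), so the exponents in $W_{k,\ell}$ must sum to zero, and no such weight can lie in $\ell^2(\ZZ^2)$. Your three-term H\"older step therefore cannot close as written. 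The arithmetic you sketch only checks that the exponent of the \emph{dominant} variable $\max(k,\ell)$ is negative in each region; after summing it out, the remaining variable carries exponent exactly zero, and the sum over $\ZZ$ diverges.

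The paper's proof handles this in the standard way for interpolation-type inequalities: it introduces an $a$-dependent cutoff. Splitting the $(k,\ell)$-sum into the regions $k\le\ell$ and $k>\ell$, it further splits the dominant index at a threshold $N$, bounds the low-frequency part by $\|a\|_{\cH^{\theta,r}}2^{N(1-\beta)}$ and the high-frequency part (after trading one derivative via Bernstein) by $\|\partial_3 a\|_{\cH^{\theta,r}}2^{-N\beta}$ (resp.\ $\|\nabla_{\rm h} a\|_{\cH^{\theta,r}}2^{-N\beta}$), and then optimizes over $N$ by choosing $2^N\sim\|\nabla a\|_{\cH^{\theta,r}}/\|a\|_{\cH^{\theta,r}}$. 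This $N$-optimization is precisely what produces the product $\|a\|_{\cH^{\theta,r}}^\beta\|\nabla a\|_{\cH^{\theta,r}}^{1-\beta}$ and cannot be replaced by a global H\"older with a fixed $\ell^2$ weight.
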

\begin{proof}
By definition of~$\|\cdot\|_{\bigl(\dB^{0}_{2,1}\bigr)_{\rm
h}\bigl(\dB^{1-3\alpha(r)-\beta}_{2,1}\bigr)_{\rm v}}$, we have \ben
\|a\|_{\bigl(\dB^{0}_{2,1}\bigr)_{\rm h}\bigl(\dB^{1-3\alpha(r)-\beta}_{2,1}\bigr)_{\rm v}}& =& H_L(a)+V_L(a)\with \nonumber\\
\label{interpolHthetademoeq1} H_L(a) & \eqdefa & \sum_{k\leq \ell}
\|\D_k^{\rm h}\D^{\rm v}_\ell a\|_{L^2}
2^{\ell(1-3\alpha(r)-\beta)} \andf\\
 V_L(a) & \eqdefa & \sum_{k> \ell} \|\D_k^{\rm h}\D^{\rm v}_\ell a\|_{L^2}  2^{\ell(1-3\alpha(r)-\beta)}. \nonumber
 \een
 In order to estimate~$H_L(a),$ we classically estimate differently  high and low vertical frequencies
  which are here the dominant ones. Using Lemma\refer{lemBern}, we  write that for any~$N$ in~$\ZZ$,
$$
H_L(a)  \lesssim    \sum_{k\leq \ell\leq N} \|\D_k^h\D^{\rm v}_\ell
a\|_{L^2} 2^{\ell(1-3\alpha(r)-\beta)}+ \sum_{\substack{k\leq
\ell\\\ell>N}} \|\D_k^h\D^{\rm v}_\ell \partial_3a\|_{L^2} 2^{-\ell
(3\alpha(r)+\beta)}.
$$
By definition of the norm of~$\cH^{\theta,r}$, we get
$$
H_L(a)  \lesssim  \|a\|_{\cH^{\theta,r}}  \sum_{k\leq \ell\leq N}
2^{k(3\alpha(r)-\theta)} 2^{\ell(1-3\alpha(r)-\beta+\theta)}+
\|\partial_3a\|_{\cH^{\theta,r}}\sum_{\substack{k\leq \ell\\\ell>N}}
2^{k(3\alpha(r)-\theta)} 2^{-\ell(3\alpha(r)+\beta-\theta)}.
$$
The hypothesis on~$(\beta,\theta)$ imply that
\beno H_L(a)  & \lesssim &
 \|a\|_{  \cH^{\theta,r}}  \sum_{\ell\leq N}  2^{\ell(1-\beta)}+
 \|\partial_3a\|_{\cH^{\theta,r}} \sum_{\ell>N}   2^{-\ell\beta}\\
& \lesssim &  \|a\|_{\cH^{\theta,r}}  2^{N(1-\beta)}+
\|\partial_3a\|_{\cH^{\theta,r}}  2^{-N\beta}.
\eeno
Choosing~$N$ such that~$\ds
2^N\sim\frac{\|\partial_3a\|_{\cH^{\theta,r}}}{\|a\|_{\cH^{\theta,r}}}$
gives
 \beq
 \label{interpolHthetademoeq2} H_L(a)\lesssim
\|a\|_{\cH^{\theta,r}}^\beta\|\partial_3a\|_{\cH^{\theta,r}}^{1-\beta}.
\eeq
The term~$V_L(a)$ is estimated along the same lines. In fact,
we  get, by using again Lemma\refer{lemBern}, that
\beno
V_L(a) &\lesssim &
\sum_{\ell<k\leq N} \|\D_k^h\D^{\rm v}_\ell a\|_{L^2}
2^{\ell
(1-3\alpha(r)-\beta)} + \sum_{\substack{ \ell<k\\
k>N}}\|\D_k^h\D^{\rm v}_\ell
\nabla_{\rm h}a\|_{L^2} 2^{\ell(1-3\alpha(r)-\beta)}2^{-k}\\
& \lesssim &  \|a\|_{\cH^{\theta,r}} \sum_{\ell <k\leq
N}2 ^{k(3\alpha(r)-\theta)}
 2^{\ell(1-3\alpha(r)-\beta+\th)} \\
 &&\qquad\qquad\qquad+
 \|\nabla_{\rm h} a\|_{\cH^{\theta,r}}\sum_{\substack{\ell\leq k\\ k>N}}2^{-k(1-3\alpha(r)+\theta)} 2^{\ell(1-3\alpha(r)-\beta+\th)}\\
 & \lesssim &  \|a\|_{\cH^{\theta,r}}  2^{N(1-\beta)}+  \|\nabla_{\rm h}a\|_{\cH^{\theta,r}}  2^{-N\beta}.
\eeno
Choosing~$N$ such that~$\ds 2^N\sim \frac{\|\nabla_{\rm h}
a\|_{\cH^{\theta,r}}}{\|a\|_{\cH^{\theta,r}}}$ yields
$$
V_L(a)\lesssim \|a\|_{\cH^{\theta,r}}^\beta\|\nabla_{\rm h}
a\|_{\cH^{\theta,r}}^{1-\beta}.
$$
Together with\refeq{interpolHthetademoeq1}
and\refeq{interpolHthetademoeq2}, this ensures the lemma.
\end{proof}

\begin{prop}
\label{BiotSavartBesovaniso}
{\sl
 Let~$v$ be a divergence free vector
field. Let us consider $\theta$ in $]0,3\al(r)[$ and~$\beta$ in~$]0,
1/2   [.$
Then we have
$$
\|v^{\rm h}\|_{\bigl(\dB^{1}_{2,1}\bigr)_{\rm
h}\bigl(\dB^{1-3\al(r)-\beta}_{2,1}\bigr)_{\rm v}}\lesssim \bigl\|\,
\om_{\frac{r}2}\bigr\|_{L^2} ^{2\al(r)+\beta} \bigl\|\nabla
\om_{\frac{r}2}\bigr\|_{\cH^{\theta,r}} ^{1-\beta}
+\|\partial_3v^3\|_{L^2}^\beta \|\nabla
\partial_3v^3\|_{\cH^{\theta,r}}^{1-\beta}.
$$
}
\end{prop}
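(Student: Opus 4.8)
The plan is to reconstruct $v^{\rm h}$ from its horizontal divergence and horizontal curl via the Biot--Savart law \refeq{a.1wrt}, $v^{\rm h}=\vcurl+\vdiv$ with $\vcurl=\nablah^\perp\D_{\rm h}^{-1}\om$ and $\vdiv=-\nablah\D_{\rm h}^{-1}\partial_3v^3$, and to estimate the two pieces separately in the space $\bigl(\dB^{1}_{2,1}\bigr)_{\rm h}\bigl(\dB^{1-3\al(r)-\beta}_{2,1}\bigr)_{\rm v}$.

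First I would treat $\vdiv$. Since $\nablah\D_{\rm h}^{-1}$ is a horizontal Fourier multiplier homogeneous of degree $-1$, the operator gains one horizontal derivative; hence
$$
\|\vdiv\|_{\bigl(\dB^{1}_{2,1}\bigr)_{\rm h}\bigl(\dB^{1-3\al(r)-\beta}_{2,1}\bigr)_{\rm v}}\lesssim \|\partial_3v^3\|_{\bigl(\dB^{0}_{2,1}\bigr)_{\rm h}\bigl(\dB^{1-3\al(r)-\beta}_{2,1}\bigr)_{\rm v}},
$$
which by Lemma\refer{interpolHtheta} applied to $a=\partial_3v^3$ is bounded by $\|\partial_3v^3\|_{\cH^{\theta,r}}^\beta\|\nabla\partial_3v^3\|_{\cH^{\theta,r}}^{1-\beta}$. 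This accounts for the second term on the right-hand side of the Proposition. The multiplier bound on anisotropic Besov blocks is an elementary consequence of Lemma\refer{lemBern} (the third estimate there, lifting horizontal derivatives on frequency rings).

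Next I would treat $\vcurl=\nablah^\perp\D_{\rm h}^{-1}\om$, which is again a degree $-1$ horizontal multiplier, so
$$
\|\vcurl\|_{\bigl(\dB^{1}_{2,1}\bigr)_{\rm h}\bigl(\dB^{1-3\al(r)-\beta}_{2,1}\bigr)_{\rm v}}\lesssim \|\om\|_{\bigl(\dB^{0}_{2,1}\bigr)_{\rm h}\bigl(\dB^{1-3\al(r)-\beta}_{2,1}\bigr)_{\rm v}}.
$$
Now $\om$ is not itself $L^2$-based; what we control are $\|\om_{\frac r2}\|_{L^2}$ and $\|\nabla\om_{\frac r2}\|_{L^2}$ (and, in the Proposition's statement, $\|\nabla\om_{\frac r2}\|_{\cH^{\theta,r}}$). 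The idea is to write $\om$ as a nonlinear function of $\om_{\frac r2}$, namely $\om=G(\om_{\frac r2})$ with $G(s)=\frac{s}{|s|}|s|^{2/r}$ a H\"olderian function of exponent $\al:=r/2\in]3/4,1[$ thought of from the target side — more precisely $\om_{\frac r2}\mapsto\om$ raises the power, so one uses that $s\mapsto s|s|^{2/r-1}$ is locally Lipschitz but only H\"older of exponent $r/2$ near the origin, hence fits Lemma\refer{puisancealphaBesov} with exponent $\al(r)$ in the relevant sense. Combining Lemma\refer{puisancealphaBesov} (to pass from $\om_{\frac r2}$ to $\om$ at the price of a H\"older exponent and an adjustment of integrability indices $p/\al$, $q/\al$), the embedding Lemmas\refer{embeda} and\refer{isoaniso} (to move between isotropic and anisotropic scales and to redistribute horizontal versus vertical regularity), the anisotropic Bernstein inequality \refeq{inclusionSobolevtypeaniso} (to correct Lebesgue exponents back to $2$ at the cost of regularity shifts), and finally Lemma\refer{BiotSavartomega} together with an interpolation of the form of Lemma\refer{interpolHtheta} (now applied to $\om_{\frac r2}$) should yield
$$
\|\om\|_{\bigl(\dB^{0}_{2,1}\bigr)_{\rm h}\bigl(\dB^{1-3\al(r)-\beta}_{2,1}\bigr)_{\rm v}}\lesssim \bigl\|\om_{\frac r2}\bigr\|_{L^2}^{2\al(r)+\beta}\bigl\|\nabla\om_{\frac r2}\bigr\|_{\cH^{\theta,r}}^{1-\beta},
$$
the exponents being forced by scaling homogeneity. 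Adding the two contributions gives the Proposition.

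The main obstacle is the $\om$ piece: one must carefully chase the exponents through the chain Lemma\refer{puisancealphaBesov} $\to$ anisotropic Bernstein $\to$ interpolation so that the Lebesgue index returns to $2$, the total regularity lands exactly at $\bigl(\dB^{1}_{2,1}\bigr)_{\rm h}\bigl(\dB^{1-3\al(r)-\beta}_{2,1}\bigr)_{\rm v}$, and the powers of $\|\om_{\frac r2}\|_{L^2}$ and $\|\nabla\om_{\frac r2}\|_{\cH^{\theta,r}}$ come out as $2\al(r)+\beta$ and $1-\beta$; the constraints $\theta\in]0,3\al(r)[$ and $\beta\in]0,1/2[$ are precisely what make every summation in the Bernstein/interpolation steps convergent, so verifying that these hypotheses suffice at each step is where the real work lies. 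The $\vdiv$ piece, by contrast, is immediate from Lemma\refer{interpolHtheta}.
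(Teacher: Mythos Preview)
Your treatment of $\vdiv$ is correct and is exactly what the paper does: reduce to $\|\partial_3v^3\|_{(\dB^0_{2,1})_{\rm h}(\dB^{1-3\al(r)-\beta}_{2,1})_{\rm v}}$ via the degree $-1$ horizontal multiplier and apply Lemma\refer{interpolHtheta}. (That lemma actually yields $\|\partial_3v^3\|_{\cH^{\theta,r}}^\beta$ rather than $\|\partial_3v^3\|_{L^2}^\beta$; the Proposition as printed has a typo there, and the $\cH^{\theta,r}$ version is what is used later.)

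For $\vcurl$, however, your plan has a real gap. You propose to write $\om=G(\om_{r/2})$ with $G(s)=s|s|^{2/r-1}$ and invoke Lemma\refer{puisancealphaBesov}. But since $r<2$ we have $2/r>1$, so $G$ \emph{raises} the power: $|G'(s)|=\frac{2}{r}|s|^{2/r-1}\to\infty$ as $|s|\to\infty$, and $G$ is not globally H\"older of any exponent in $]0,1[$. Lemma\refer{puisancealphaBesov} therefore does not apply in this direction (it applies to the reverse map $\om\mapsto\om_{r/2}$, H\"older of exponent $r/2$, but that is the wrong direction). Scaling may tell you that $2\al(r)+\beta$ and $1-\beta$ are the only possible exponents, but the mechanism you outline cannot produce them.

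The paper's argument for the $\om$ piece avoids nonlinear composition entirely. After the same reduction to $\|\om\|_{(\dB^0_{2,1})_{\rm h}(\dB^{1-3\al(r)-\beta}_{2,1})_{\rm v}}$, it uses the anisotropic Bernstein embedding\refeq{inclusionSobolevtypeaniso} with $p_2=\frac{3r}{1+r}$, $p_1=2$ followed by Lemma\refer{isoaniso} to land in the \emph{isotropic} space $\dB^{1-2\al(r)-\beta}_{3r/(1+r),1}$. Then a direct high/low frequency split in that isotropic norm controls low frequencies by $\|\om\|_{L^r}$ and high frequencies by $\|\nabla\om\|_{L^{3r/(1+r)}}$, with the cutoff optimized. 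The point is that both of these Lebesgue norms are expressible in terms of $\om_{r/2}$ without any composition lemma: $\|\om\|_{L^r}=\|\om_{r/2}\|_{L^2}^{2/r}$ trivially, and $\|\nabla\om\|_{L^{3r/(1+r)}}\lesssim\|\nabla\om_{r/2}\|_{L^2}^{2/r}$ via the pointwise identity $|\nabla\om|=\frac{2}{r}|\nabla\om_{r/2}|\,|\om_{r/2}|^{2/r-1}$ together with H\"older and Sobolev (this is\refeq{qw.3}). The outcome is $\|\om_{r/2}\|_{L^2}^{2\al(r)+\beta}\|\nabla\om_{r/2}\|_{L^2}^{1-\beta}$, with $L^2$ (not $\cH^{\theta,r}$) on the gradient factor; this is another typo in the stated Proposition, and the $L^2$ version is what is actually used downstream.
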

\begin{proof}
Using horizontal Biot-Savart law\refeq {a.1} and
Lemma\refer{lemBern}, we have \beq
\label{BiotSavartBesovanisodemoeq1} \|v^{\rm
h}\|_{\bigl(\dB^{1}_{2,1}\bigr)_{\rm h}
\bigl(\dB^{1-3\al(r)-\beta}_{2,1}\bigr)_{\rm v}} \lesssim
\|\om\|_{\bigl(\dB^{0}_{2,1}\bigr)_{\rm
h}\bigl(\dB^{1-3\alpha(r)-\beta}_{2,1}\bigr)_{\rm v}} +
\|\pa  rtial_3v^3\|_{\bigl(\dB^{0}_{2,1}\bigr)_{\rm
h}\bigl(\dB^{1-3\alpha(r)-\beta}_{2,1}\bigr)_{\rm v}}.
 \eeq
 Applying Lemma \refer{lemBern} and Lemma \refer{isoaniso} gives
 \ben
\|\om\|_{\bigl(\dB^{0}_{2,1}\bigr)_{\rm
h}\bigl(\dB^{1-3\alpha(r)-\beta}_{2,1}\bigr)_{\rm v}}
& \lesssim &
\|\om\|_{\bigl(\dB^{\frac23\alpha(r)}_{\frac{3r}{1+r},1}\bigr)_{\rm h}
\bigl(\dB^{1-\f83\alpha(r)-\beta}_{\frac{3r}{1+r},1}\bigr)_{\rm v}}
\nonumber\\
\label{BiotSavartBesovanisodemoeq2}& \lesssim &
\|\om\|_{\dB^{1-2\alpha(r)-\beta}_{\frac{3r}{1+r},1}}.
 \een
 Now let us estimate~$\|\om\|_{\dB^s_{\f{3r}{1+r},1}}$  in terms
of~$\bigl\|\om_{\frac{r}2}\bigr\|_{L^2}$
and~$\bigl\|\na\om_{\frac{r}2}\bigr\|_{L^{2}}$.
 For $s$ in~$ ]-2\alpha(r), 1[$  and any positive integer $N,$ which we shall choose hereafter, we write that
\beno \|\om\|_{\dB^s_{\f{3r}{1+r},1}} &=&
\sum_{j\leq N}2^{  js}\|\D_j\om\|_{L^{\f{3r}{1+r}}}+\sum_{j>N}2^{js}\|\D_j\om\|_{L^{\f{3r}{1+r}}}\\
& \lesssim &
\sum_{j\leq N}2^{j(s+2\alpha(r))}\|\D_j\om\|_{L^{r}}+\sum_{j>N}2^{j(s-1)}\|\D_j\na\om\|_{L^{\f{3r}{1+r}}}\\
& \lesssim &
2^{N(s+2\alpha(r))}\|\om\|_{L^{r}}+2^{N(s-1)}\|\na\om\|_{L^{\f{3r}{1+r}}}.
\eeno
 Choosing $N$ such that $2^N\sim \Bigl(\f{\|\na
\om\|_{L^{\f{3r}{1+r}}}}{\|\om\|_{L^{r}}}\Bigr)^{\f1{1+2\alpha(r)}}$
yields
$$
\|\om\|_{\dB^s_{\f{3r}{1+r},1}}\lesssim
\|\om\|_{L^{r}}^{\f{1-s}{1+2\alpha(r)}}\|\na\om\|_{L^{\f{3r}{1+r}}}^{\frac {s+2\alpha(r)} {1+2\al(r)}}.
$$
Due to \eqref{nal}, $ 1+2\alpha(r)=\frac 2 {r}   ,$ then using the above
inequality with~$s=\ds 1-2\alpha(r) -\beta\in ]-2\alpha(r), 1[,$
\eqref{BiotSavartBesovanisodemoeq2}, and \eqref{qw.3} gives \ben
\|\om\|_{\bigl(B^{0}_{2,1}\bigr)_{\rm
h}\bigl(B^{1-3\alpha(r)-\beta}_{2,1}\bigr)_{\rm v}} &\lesssim&
\|\om\|_{L^r}^{\f{r}2\bigl(2\alpha(r)+\beta\bigr)}\|\na\om\|_{L^{\f{3r}{1+r}}}^{\f{r}2  (1-\beta)}\nonumber\\
 &\lesssim& \bigl\|\,
\om_{\frac{r}2}\bigr\|_{L^2} ^{2\alpha(r)+\beta} \bigl\|\nabla
\om_{\frac{r}2}\bigr\|_{L^2} ^{1-\beta}. \label{b.8} \een

The application of Lemma  \ref{interpolHtheta} together
with\refeq{BiotSavartBesovanisodemoeq1} and\refeq{b.8} leads to Proposition \ref{BiotSavartBesovaniso}.
\end{proof}

To study product laws between distributions in the anisotropic Besov
spaces, we need to  modify the isotropic para-differential
decomposition of  Bony \cite{Bo81} to the setting of anisotropic
version. We first recall the isotropic para-differential
decomposition from \cite{Bo81}: let $a$ and~$b$ be in~$
\cS'(\R^3)$, \beq \label{pd}\begin{split} &
ab=T(a,b)+\bar{T}(a,b)+ R(a,b)\with\\
& T(a,b)=\sum_{j\in\Z}S_{j-1}a\Delta_jb, \quad
\bar{T}(a,b)=T(b,a), \andf\\
&R(a,b)=\sum_{j\in\Z}\Delta_ja\tilde{\Delta}_{j}b,\quad\hbox{with}\quad
\tilde{\Delta}_{j}b=\sum_{\ell=j-1}^{j+1}\D_\ell a. \end{split} \eeq
Sometimes we shall use Bony's decomposition for both horizontal and
vertical variables simultaneously.

Finally let us recall the following product laws in the anisotropic
Besov spaces from \cite{CZ5}:

\begin{lem}[Lemma 4.5 of \cite{CZ5}]
\label{lem2.2qw} {\sl Let $q\geq 1,$ $p_1\geq p_2\geq 1$ with
$1/{p_1}+1/{p_2}\leq 1,$ and $s_1< 2/{p_1},$ $ s_2< 2/{p_2}$ (resp.
$s_1\leq 2/{p_1},$ $ s_2\leq 2/{p_2}$ if $q=1$) with $s_1+s_2>0.$
Let $\sigma_1< 1/{p_1},$ $ \sigma_2< 1/{p_2}$ (resp. $\sigma_1\leq
1/{p_1},$ $ \sigma_2\leq 1/{p_2}$ if $q=1$) with
$\sigma_1+\sigma_2>0$. Then for $a$ in~$
\bigl(\dB^{s_1}_{p_1,q}\bigr)_{\rm
h}\bigl(\dB^{\s_1}_{p_1,q}\bigr)_{\rm v}$ and~$b$
in~$\bigl(\dB^{s_2}_{p_2,q}\bigr)_{\rm
h}\bigl(\dB^{\s_2}_{p_2,q}\bigr)_{\rm v}$ , the product~$ab$ belongs
to~$ \bigl(\dB^{s_1+s_2-2/{p_2}}_{p_1,q}\bigr)_{\rm
h}\bigl(\dB^{\sigma_1+\sigma_2-{1}/{p_2}}_{p_1,q}\bigr)_{\rm v},$
and \beno \|a b\|_{\bigl(\dB^{s_1+s_2-2/{p_2}}_{p_1,q}\bigr)_{\rm
h}\bigl(\dB^{\sigma_1+\sigma_2-{1}/{p_2}}_{p_1,q}\bigr)_{\rm
v}}\lesssim \|a\|_{\bigl(\dB^{s_1}_{p_1,q}\bigr)_{\rm
h}\bigl(\dB^{\s_1}_{p_1,q}\bigr)_{\rm v}}\|b\|_{\bigl(\dB^{s_2}_{p_2,q}\bigr)_{\rm
h}\bigl(\dB^{\s_2}_{p_2,q}\bigr)_{\rm v}}. \eeno}
\end{lem}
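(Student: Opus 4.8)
The final statement in the excerpt is Lemma~\ref{lem2.2qw} (Lemma 4.5 of \cite{CZ5}), an anisotropic product law quoted from a previous paper. Below I sketch how I would prove it.

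\medbreak

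The plan is to reduce the anisotropic bilinear estimate to the usual para\-product machinery applied simultaneously in the horizontal and the vertical variables. First I would apply Bony's decomposition \eqref{pd} in both sets of variables at once: writing $ab$ as a sum over the nine terms obtained by combining $T^{\rm h},\bar T^{\rm h},R^{\rm h}$ with $T^{\rm v},\bar T^{\rm v},R^{\rm v}$. Each term is of the form $\sum_{k,\ell}(\hbox{localized piece of }a)(\hbox{localized piece of }b)$, where the supports in the frequency variables $\xi_{\rm h}$ and $\xi_3$ are controlled: for the para\-product pieces one factor is frequency-localized in a ring $2^k\cC_{\rm h}$ or $2^\ell\cC_{\rm v}$ while the other is localized in a ball, and for the remainder pieces both factors live near the same dyadic block up to a fixed shift. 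The point is then to estimate $\|\D_k^{\rm h}\D_\ell^{\rm v}(ab)\|_{L^{p_1}}$ for each of the nine contributions and sum against the weights $2^{k(s_1+s_2-2/p_2)}2^{\ell(\sigma_1+\sigma_2-1/p_2)}$.

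\medbreak

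The key tool for each individual piece is the anisotropic Bernstein lemma (Lemma~\ref{lemBern}): whenever a factor has horizontal frequencies in a ball $2^k\cB_{\rm h}$ it costs a factor $2^{2k(1/p_2-1/p_1)}$ to upgrade its horizontal Lebesgue exponent from $p_2$ to $p_1$, and similarly $2^{\ell(1/p_2-1/p_1)}$ in the vertical variable; this is exactly where the losses $2/p_2$ and $1/p_2$ in the target indices come from. After applying H\"older in $x_{\rm h}$ and $x_3$ (using $1/p_1+1/p_2\le 1$ so that the H\"older exponents are admissible) one obtains, for each of the nine terms, a bound of the form
$$
2^{k(s_1+s_2-2/p_2)}2^{\ell(\sigma_1+\sigma_2-1/p_2)}\|\D_k^{\rm h}\D_\ell^{\rm v}(ab)\|_{L^{p_1}}\lesssim c_{k,\ell}\,\|a\|_X\|b\|_Y,
$$
with $\|(c_{k,\ell})\|_{\ell^q}\lesssim 1$, where $X,Y$ denote the two anisotropic Besov norms in the statement. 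For the low-high para\-product terms $T^{\rm h}T^{\rm v}$, $T^{\rm h}\bar T^{\rm v}$, etc., one uses the conditions $s_i<2/p_i$ and $\sigma_i<1/p_i$ to sum the geometric series over the ``low'' indices; for the remainder terms $R^{\rm h}$ or $R^{\rm v}$ one uses $s_1+s_2>0$ and $\sigma_1+\sigma_2>0$ to sum over the diagonal, after first summing the inner series (this is where the ordering of summations matters, and where one must be slightly careful because the remainder produces a sum over $k'\ge k$ resp. $\ell'\ge\ell$ that is then re-localized by $\D_k^{\rm h}\D_\ell^{\rm v}$). The endpoint cases $s_i=2/p_i$, $\sigma_i=1/p_i$ when $q=1$ are handled the same way, the geometric sums being replaced by the trivial $\ell^1$ bound.

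\medbreak

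The main obstacle is purely bookkeeping: one has nine mixed para\-product/remainder terms, and for each the two frequency variables behave differently, so the Bernstein gains must be tracked separately in $k$ and in $\ell$, and the resulting double sums must be organized so that in each variable one is always summing a convergent geometric (or $\ell^1$) series in the correct direction. No single term is hard, but getting all the index inequalities $s_1<2/p_1$, $s_2<2/p_2$, $s_1+s_2>0$, $\sigma_1<1/p_1$, $\sigma_2<1/p_2$, $\sigma_1+\sigma_2>0$ to do exactly the right job — and in particular seeing why the horizontal loss is $2/p_2$ while the vertical loss is $1/p_2$ (because $\R^2_{\rm h}$ is two-dimensional and $\R_{\rm v}$ is one-dimensional) — is the substance of the argument. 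Since this is Lemma 4.5 of \cite{CZ5}, I would simply cite that reference for the full details.
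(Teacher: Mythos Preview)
Your proposal is correct and matches the paper's treatment: the paper does not prove this lemma at all but simply recalls it as Lemma~4.5 of \cite{CZ5}, exactly as you suggest doing at the end. The sketch you give of the nine-term double Bony decomposition combined with anisotropic Bernstein inequalities is the standard route and is indeed how the result is proved in the cited reference.
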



\setcounter{equation}{0}
\section{Proof of the  estimate for the horizontal vorticity}
\label{proofestimateomega}

 The purpose of this section to present the proof of Proposition \ref{inegfondvroticity2D3D}. Let us recall  the first  equation of our reformulation~$(\wt {NS})$ of the incompressible Navier-Stokes equation
 which is
 $$
 \partial_t\om+v\cdot\nabla\om -\D\om=   \partial_3v^3\om +\partial_2v^3\partial_3v^1-\partial_1v^3\partial_3v^2.
 $$
 As already explained in the second section, we decompose the right-hand side term as a sum of three terms.  Hence by virtue of \eqref{tvestimate}, we obtain
 \beq
\label{theoxplosaniscalingdemoeq1}
\begin{split}
&\frac 1r \bigl\|\om_{\frac{r}2}(t) \bigr\|_{L^2}^2  +
\frac{4(r-1)}{r^2}\int_0^t \bigl\|\nabla
\om_{\frac{r}2}(t')\bigr\|_{L^2}^2\,dt' = \frac1r
\bigl\||\om_0|^{\frac{r}2}
\bigr\|_{L^2}^2 +\sum_{\ell=1}^3 F_\ell(t) \with\\
&F_1(t) \eqdefa  \int_0^t \int_{\R^3} \p_3v^3 |\om|^{r}\,dx\,dt'\,,\\
&F_2(t) \eqdefa  \int_0^t \int_{\R^3}\bigl(\p_2v^3\p_3v_{\rm curl}^1-\p_1v^3\p_3v_{\rm curl}^2\bigr) \om_{r-1}\,dx\,dt'\andf\\
&F_3(t) \eqdefa  \int_0^t \int_{\R^3}\bigl(\p_2v^3\p_3v_{\rm
div}^1-\p_1v^3\p_3v_{\rm div}^2 \bigr) \om_{r-1}\,dx\,dt',
\end{split}
 \eeq
where~$v_{\rm curl}^{\rm h}$ (resp.~$v_{\rm div}^{\rm h}$)
corresponds to the horizontal divergence free (resp. curl free) part of
the horizontal vector~$v^{\rm h}=(v^1,v^2),$ which is given by
\eqref{a.1},  and where~$\om_{r-1}\eqdefa |\om|^{r-2}\,\om$.

Let us start with the easiest term~$F_1$. 
 We first get, by using
integration by parts, that \beno |F_1(t)|&\leq&
r\int_0^t\int_{\R^3}|v^3(t',x)|\,|\p_3\om(t',x)|\,|\om(t',x)|^{r-1}\,dx\,dt'\\
&\leq&
r\int_0^t\int_{\R^3}|v^3(t',x)|\,|\p_3\om(t',x)|\,|\om_{\f{r}2}(t',x)|^{\frac 2 {r'}   }\,dx\,dt'.
\eeno Using that
$$
\frac {p-2} {3p} + \frac1r +
\frac{2pr-3p+2r}{6p(r-1)}\times\frac 2 {r'} =1,
$$
we apply H\"older inequality to get
$$
|F_1(t)|\leq r
\int_0^t\|v^3(t')\|_{L^{\f{3p}{p-2}}}\|\p_3\om(t')\|_{L^{r}}\bigl\|\om_{\frac{r}2}(t')\bigr\|_{L^{\f{6p(r-1)}{2pr-3p+2r}}}^{\frac 2 {r'}   }\,dt'.
$$
As~$p$ is in~$\ds\bigl]4,\f{2r}{2-r}\bigr[$, we have that
$\ds r'\f{p-2}{2p}$ belongs to~$]0,1[.$ Then  Sobolev embedding and
interpolation inequality implies that
$$
\bigl\|\om_{\frac{r}2}(t')\bigr\|_{L^{\f{6p(r-1)}{2pr-3p+2r}}}
\lesssim
\bigl\|\om_{\frac{r}2}  (t')\bigr\|_{\dH^{r'\frac{(p-2)}{2p}}}\lesssim
\bigl\|\om_{\frac{r}2}(t')\bigr\|_{L^2}^{\frac{2r-p(2-r)}{2p(r-1)}}
\bigl\|\nabla \om_{\f{r}2}(t')\bigr\|_{L^2}^{r'\frac{p-2}{2p}}.
$$
Using\refeq{estimbasomega34} of Lemma\refer{BiotSavartomega}, this gives
$$
{ |F_1(t)| \lesssim \int_0^t\|v^3(t')\|_{\dH^{\frac 1 2   +\frac 2 p }}
\bigl\|\p_3\om_{\frac{r}2}(t')\bigr\|_{L^2}
\bigl\|\om_{\frac{r} 2}(t')\bigr\|_{L^2}^{\frac 2 {r}   -1} }
{
\bigl\|\na\om_{\frac{r}2}(t')\bigr\|_{L^2}^{1-\frac 2 p }\bigl\|\om_{\frac{r}2}(t')\bigr\|_{L^2}^{1-2\left(\frac 1 {r}   -\frac 1 p \right)}\,dt'.
}
$$
Applying convex inequality, we obtain
 \ben
|F_1(t)| & \lesssim &
\int_0^t\|v^3(t')\|_{\dH^{\frac 1 2   +\frac 2 p }}\bigl\|\om_{\frac{r}2}(t')\bigr\|_{L^2}^{\frac 2 p }\bigl\|\na\om_{\frac{r}2}(t')\bigr\|_{L^2}^{\frac 2 {p'}} \,dt' \nonumber\\
 \label{a.9}   & \leq &
\f{r-1}{r^2}\int_0^t\bigl\|\na\om_{\frac{r}2}(t')\bigr\|_{L^2}^2\,dt'
+C\int_0^t\|v^3(t')\|_{\dH^{\frac 1 2   +\frac 2 p }}^p\bigl\|\om_{\frac{r}2}(t')\bigr\|_{L^2}^{2}\,dt'.
 \een

\medbreak The other two terms in \eqref{theoxplosaniscalingdemoeq1}
require a refined way of the description of the regularity
of~$\om_{\frac r2}$ and demand a detailed study  of the anisotropic
operator~$\nabla_{\rm h}\D_{\rm h}^{-1}$ associated with the
Biot-Savart's law in horizontal variables.   Now we first modify
Lemma 5.2 of  \cite{CZ5} to the following one.

 \begin{lem}
 \label{anositropiclemma}
{\sl Let  $\theta$ be in~$]0,\al(r)[$ for $\al(r)$ given by
\eqref{nal}, and~$\s,$ ~$s$ be such that
$$
\s\in \Bigl]\frac {r'} 4,1\Bigr[ \andf  s=\frac 1 2   +1-\frac {2\s}{r'}\, \cdotp
$$Then we have
 \beq
 \label{b.1}
 \Bigl| \int_{\R^3}
\partial_{\rm h}\D_{\rm h}^{-1}f
\partial_{\rm h}a \,\om_{r-1} dx\Bigr|
 \lesssim  \min\bigl\{ \|f\|_{L^{r}},  \| f\|_{\cH^{\theta,r}}\bigr\}  \|a\|_{\dH^s} \bigl\|\om_{\frac{r}2}\bigr\|_{\dH^\s}^{\frac 2 {r'}   }
\eeq
for $\cH^{\theta,r}$ given by Definition \ref{def2.1ad}.}
 \end{lem}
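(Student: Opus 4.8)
The quantity to estimate is a trilinear form involving $\partial_{\rm h}\D_{\rm h}^{-1}f$, $\partial_{\rm h}a$ and $\om_{r-1}=|\om|^{r-2}\om$. The crucial observation is that $\om_{r-1}$ can be rewritten in terms of $\om_{\frac r2}$: since $|\om_{r-1}|=|\om|^{r-1}=|\om_{\frac r2}|^{\frac{2(r-1)}{r}}=|\om_{\frac r2}|^{\frac{2}{r'}}$, the natural unknown throughout is $\om_{\frac r2}$, which we control in $L^2$ and whose gradient we control in $L^2$ (Lemma~\ref{estimpropagationLp}, Proposition~\ref{inegfondvroticity2D3D}). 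The plan is therefore to use H\"older's inequality in a form adapted to the anisotropy of $\partial_{\rm h}\D_{\rm h}^{-1}$, combined with the Sobolev/interpolation inequalities of Lemma~\ref{BiotSavartomega} and the anisotropic Besov embeddings of Section~3.

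\textbf{First step: the $\|f\|_{L^r}$ bound.} For the easier of the two bounds I would argue by plain H\"older in $x$: write the integral as $\int (\partial_{\rm h}\D_{\rm h}^{-1}f)(\partial_{\rm h}a)\,\om_{r-1}\,dx$ and split the three Lebesgue exponents. Since $\partial_{\rm h}\D_{\rm h}^{-1}$ is a homogeneous Fourier multiplier of order $0$ in the horizontal variable (a Riesz-type transform acting only on $\xi_{\rm h}$), it is bounded on every $L^q$ with $1<q<\infty$; in particular $\|\partial_{\rm h}\D_{\rm h}^{-1}f\|_{L^r}\lesssim\|f\|_{L^r}$. One then distributes the remaining integrability among $\partial_{\rm h}a$ (placed in $L^{q_1}$, controlled by $\|a\|_{\dH^{s}}$ via Sobolev embedding once $s$ is chosen so that $\dH^{s-1}\hookrightarrow L^{q_1}$ in the horizontal-plus-vertical sense — here is where the precise value $s=\frac12+1-\frac{2\sigma}{r'}$ enters) and $\om_{r-1}\in L^{q_2}$ with $\frac1r+\frac1{q_1}+\frac1{q_2}=1$, using $\|\om_{r-1}\|_{L^{q_2}}=\|\om_{\frac r2}\|_{L^{\frac{2q_2}{r'}}}^{\frac2{r'}}$ and then Sobolev embedding $\dH^{\sigma}\hookrightarrow L^{\frac{2q_2}{r'}}$. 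Bookkeeping of exponents will show the constraint $\sigma\in\bigl]\frac{r'}4,1\bigr[$ is exactly what makes the interpolation exponents land in $[0,1]$ and the Sobolev embeddings admissible.

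\textbf{Second step: the $\|f\|_{\cH^{\theta,r}}$ bound.} This is the harder half and the main obstacle, because $\cH^{\theta,r}=\dH^{-3\al(r)+\theta,-\theta}$ is a \emph{negative}-regularity anisotropic space, so $f$ is rough and one cannot simply put it in $L^r$. Here I would dualize the first factor against the product of the other two: $\partial_{\rm h}\D_{\rm h}^{-1}$ is still order zero horizontally and order zero vertically, so it maps $\cH^{\theta,r}$ into itself (up to the harmless sign), and the pairing is controlled by $\|f\|_{\cH^{\theta,r}}$ times the norm of $(\partial_{\rm h}a)\,\om_{r-1}$ in the \emph{dual} anisotropic Besov space $\bigl(\dB^{3\al(r)-\theta}_{2,\infty}\bigr)_{\rm h}\bigl(\dB^{\theta}_{2,\infty}\bigr)_{\rm v}$ (or an $\ell^1$ version, using Lemma~\ref{lem2.2qw}). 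To estimate that product one uses the anisotropic product law Lemma~\ref{lem2.2qw} together with Lemma~\ref{embeda} and Lemma~\ref{isoaniso} to put $\partial_{\rm h}a$ into an isotropic Sobolev space built from $\|a\|_{\dH^{s}}$ and $\om_{r-1}=|\om|^{r-2}\om$ into an anisotropic Besov space built from $\|\om_{\frac r2}\|_{\dH^{\sigma}}^{2/r'}$ via Lemma~\ref{puisancealphaBesov} (the H\"olderian-composition lemma applied to $G(t)=|t|^{2/r'-1}t$, which is H\"older of exponent $\min\{1,2/r'-1\}$ — one checks $2/r'-1=2(r-1)/r\cdot\frac1{\,}$ lies in $(0,1)$ precisely because $r\in]3/2,2[$). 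The delicate point is matching the regularity indices: the horizontal index produced must equal $3\al(r)-\theta$ and the vertical index must equal $\theta$, and the sum rules in Lemma~\ref{lem2.2qw} (namely $s_1+s_2>0$, $\sigma_1+\sigma_2>0$, and the strict upper bounds) impose exactly the hypotheses $\theta\in]0,\al(r)[$ and $\sigma\in\bigl]\frac{r'}4,1\bigr[$. Once the indices are reconciled, interpolation between $\dH^0$ and $\dH^1$ for $\om_{\frac r2}$ (as in Lemma~\ref{BiotSavartomega}) and for $a$ converts everything into the stated right-hand side. I expect the interpolation bookkeeping in this second step to be the only genuinely fiddly part; the structural ingredients are all in place from Section~3.
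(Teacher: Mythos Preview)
Your proposal has a genuine gap that would make both steps fail as written. The operator~$\partial_{\rm h}\D_{\rm h}^{-1}$ has symbol~$i\xi_j/|\xi_{\rm h}|^2$, which is homogeneous of degree~$-1$ (not~$0$) in~$\xi_{\rm h}$; it is \emph{not} a horizontal Riesz transform and is not bounded on~$L^r(\R^3)$. What HLS actually gives is the anisotropic gain~$\|\partial_{\rm h}\D_{\rm h}^{-1}f\|_{L^r_{\rm v}(L^{2r/(2-r)}_{\rm h})}\lesssim\|f\|_{L^r}$. More seriously, ``plain H\"older'' cannot be used on the factor~$\partial_{\rm h}a$: the hypothesis~$\sigma>r'/4$ forces~$s=\tfrac32-\tfrac{2\sigma}{r'}<1$, so~$\partial_{\rm h}a\in\dH^{s-1}$ with~$s-1<0$, and there is no Sobolev embedding of a negative-index space into any~$L^{q_1}$. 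The same misidentification recurs in your second step (``order zero horizontally''); there the gain of one horizontal derivative is exactly what makes the~$\cH^{\theta,r}$ duality close.

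The paper's cure is precisely to avoid putting~$\partial_{\rm h}a$ into a Lebesgue space. One uses Bony's decomposition to write
\[
\partial_{\rm h}a\,\om_{r-1}=\partial_{\rm h}T(\om_{r-1},a)+A(a,\om),
\]
where~$A$ collects~$T(\partial_{\rm h}a,\om_{r-1})+R(\partial_{\rm h}a,\om_{r-1})-T(\partial_{\rm h}\om_{r-1},a)$. For the first piece the extra~$\partial_{\rm h}$ is integrated by parts back onto the operator, producing the genuinely order-zero multiplier~$\partial_{\rm h}^2\D_{\rm h}^{-1}$; one then estimates~$T(\om_{r-1},a)$ in~$\dH^{3\al(r)}$ (using Lemma~\ref{puisancealphaBesov} for~$\om_{r-1}=G(\om_{r/2})$ with~$G(z)=z|z|^{-2\al(r)}$, H\"older exponent~$2/r'$) and dualizes against~$\|f\|_{L^r}$ or~$\|f\|_{\cH^{\theta,r}}$ directly. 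For the remainder~$A(a,\om)$ one shows~$A\in\dB^{1/2}_{\frac{2r}{3r-2},\frac{2r}{3r-2}}$, which (i) embeds into~$L^{r'}_{\rm v}(L^{\frac{2r}{3r-2}}_{\rm h})$, matching the anisotropic HLS dual for the~$L^r$ bound, and (ii) embeds into~$\dH^{-1+3\al(r)-\theta,\theta}$, matching~$\|\nabla_{\rm h}\D_{\rm h}^{-1}f\|_{\dH^{1-3\al(r)+\theta,-\theta}}\lesssim\|f\|_{\cH^{\theta,r}}$ for the second bound. The Bony split is not optional here: it is what separates the piece where integration by parts is legal (high frequencies on~$a$) from the piece where the anisotropic smoothing of~$\partial_{\rm h}\D_{\rm h}^{-1}$ must be used (high frequencies on~$\om_{r-1}$).
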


 \begin{proof}
 Let us observe that~$\ds \om_{r-1} =G(\om_{\frac{r}2})$ with~$G(z)\eqdefa z|z|^{-2\al(r)}$. Using Lemma\refer{puisancealphaBesov}, we obtain
 \beq
 \label{anositropiclemmademoeq1}
\bigl\|\om_{r-1}\bigr\|_{\dB^{\frac {2\s} {r'} }_{r'   ,r'   }}
\lesssim \bigl\|\om_{\frac{r}2}\bigr\|_{\dH^\s}^{\frac 2 {r'}   }.
 \eeq
 Let us study the product~$\partial_{\rm h}a\om_{r-1}$. Using Bony's decomposition \eqref{pd} and the Leibnitz formula, we
 write
 \beno
 \partial_{\rm h} a\, \om_{r-1} & = & T({\partial_{\rm h} a}, \om_{r-1}) +R(\partial_{\rm h}a,\om_{r-1})+
 T({\om_{r-1}}, \partial_{\rm h}a)\\
 & = & \partial_{\rm h}T({\om_{r-1}}, a) +A(a,\om)\with \\
 A(a,\om) &\eqdefa&
  T({\partial_{\rm h} a}, \om_{r-1}) +R(\partial_{\rm h}a,\om_{r-1})
 -T({ \partial_{\rm h}\om_{r-1}},a).
 \eeno
W  e first get, by using Lemma \ref{lemBern}, that
 \beno
\|\D_jT({\om_{r-1}}, a)\|_{L^2}& \lesssim &\sum_{|j-j'|\leq
4}\|S_{j'-1}\om_{r-1}\|_{L^\infty}\|\D_{j'}a\|_{L^2}\\
&\lesssim&  \sum_{|j-j'|\leq 4}
2^{j'\bigl(\frac 3 {r'}   -\frac {2\s} {r'}   \bigr)}\bigl\|\om_{r-1}\bigr\|_{\dB^{\frac {2\s} {r'} }_{r'   ,r'   }}
c_{j',2}
2^{-j's}\|a\|_{\dH^s}\\
&\lesssim & c_{j,2}2^{-j\bigl(s-\frac 1 {r'}
(3-2\s)\bigr)}\bigl\|\om_{r-1}\bigr\|_{\dB^{\frac {2\s} {r'} }_{r'
,r'   }} \|a\|_{\dH^s}. \eeno Here and in what follows, we always
denote $\bigl(c_{j,r}\bigr)_{j\in\Z}$ to be a generic element in the
sphere of~$\ell^r(\Z)$.  Then together with
\eqref{anositropiclemmademoeq1}, the above inequality ensures that
\beq\label{poi}
 \bigl\| T({\om_{r-1}}, a) \|_{\dH^{s-\frac 1 {r'}   (3-2\s)} } \lesssim \|a\|_{\dH^s} \bigl\|\om_{\f{r}2}\bigr\|  _{\dH^\s}^{\frac 2 {r'}   }.
 \eeq
Using that the operator~$ \partial_{\rm h}^2\D_{\rm h}^{-1}$ is a bounded Fourier multiplier and  the
 dual Sobolev embedding~$L^{r}\hookrightarrow \dH^{-3\al(r)}$, we get by
 taking~$\ds s=\frac 32-\frac {2\s} {r'}   $ in \eqref{poi} that
\ben \Bigl| \int_{\R^3} \partial_{\rm h}\D_{\rm h}^{-1}f
\partial_{\rm h} T({\om_{r-1}}, a)\, dx\Bigr| & =&
 \Bigl|\int_{\R^3}
\partial^2_{\rm h}\D_{\rm h}^{-1}f T({\om_{r-1}}, a) \,dx\Bigr|\nonumber \\
&  \leq &
\|f\|_{\dot H^{-3\al(r)}}\|T({\om_{r-1}}, a)\|_{\dot H^{3\al(r)}} \nonumber \\
  \label{anositropiclemmademoeq2}&  \lesssim &
  \|f\|_{L^{r}} \|a\|_{\dH^s}
  \bigl\|\om_{\frac{r}2}\bigr\|_{\dH^\s}^{\frac 2 {r'}   }.
 \een
 In the case of the anisotropic norm, recalling that~$\cH^{\theta,r}= \dH^{-3\al(r)+\theta,-\theta}$, and using Lemma\refer{isoa niso},  we write
\ben
 \Bigl| \int_{\R^3}
\  partial^2_{\rm h}\D_{\rm h}^{-1}f  T({\om_{r-1}}, a)\, dx\Bigr|  & \leq &
 \|f\|_{\cH^{\theta,r}} \|T({\om_{r-1}}, a)\|_{\dH^{3\al(r)-\theta,\theta}}\nonumber \\
 & \lesssim &  \|f\|_{\cH^{\theta,r}} \|T({\om_{r-1}}, a)\|_{\dH^{3\al(r)}}\nonumber \\
\label{anositropiclemmademoeq2b}& \lesssim &  \|f\|_{\cH^{\theta,r}}
\|a\|_{\dH^s} \bigl\|\om_{\frac{r}2}\bigr\|_{\dH^\s}^{\frac 2 {r'}   }.
  \een

 Now let us take into account the anisotropy induced by the operator~$\partial_{\rm h}\D_{\rm h}^{-1}$.
  Hardy-Littlewood-Sobolev inequality implies that~$\partial_{\rm h}\D_{\rm h}^{-1} f$ belongs to~$L^{r}_{\rm v}(L^{\f{2r}{2-r}}_{\rm h})$
   if $f$ is in~$ L^{r}.$
  So that it amounts  to prove that~$A(a,\om)$ belongs to~$L^{r'   }_{\rm v}(L^{\frac{2r}{3r-2}}_{\rm h}),$ which is simply an anisotropic Sobolev type embedding.
   Because of ~$s=\frac 1 2   +1-\frac {2\s} {r'}   <1$, we get, by using Lemma \ref{lemBern}, Inequality\refeq{anositropiclemmademoeq1} and H\"older inequality with~$2$ and~$r'$, that
\beno
\|\D_jT({\partial_{\rm h}a},\om_{r-1})\|_{L^{\f{2r}{3r-2}}}
&\lesssim &
\sum_{|j'-j|\leq  4}\|S_{j'-1}\partial_{\rm h}a\|_{L^2}\|\D_{j'}\om_{r-1}\|_{L^{r'   }}\\
&  \lesssim &\sum_{|j'-j|\leq 4}c_{j',2}c_{j',r'   }2^{  j'(1-s)}
\|a\|_{\dH^s}2^{-2j'\frac \s {r'}   }\bigl\|\om_{\f{r}2}\bigr\|_{\dH^\s}^{\frac 2 {r'}   }\\
&\lesssim&
c_{j,\f{2r}{3r-2}}2^{-\f{j}2}\|a\|_{\dH^s}\bigl\|\om_{\frac{r}2}\bigr\|_{\dH^\s}^{\frac 2 {r'}   }.
\eeno
Along the same lines,  we have
\beno
\|\D_jR(\p_{\rm h}a,\om_{r-1})\|_{L^{\f{2r}{3r-2}}}&\lesssim& \sum_{j'\geq
j-3}\|\D_{j'}\p_{\rm
h}a\|_{L^2}\|\wt{\D}_{j'}\om_{r-1}\|_{L^{r'   }}\\
&\lesssim&\sum_{j'\geq j-
   3}c_{j',2}c_{j',r'   }2^{j'(1-s)}\|a\|_{\dH^s}2^{-2j'\frac \s {r'}   }\bigl\|\om_{\f{r}2}\bigr\|_{\dH^\s}^{\frac 2 {r'}   }\\
&\lesssim
&c_{j,\f{2r}{3r-2}}2^{-\f{j}2}\|a\|_{\dH^s}\bigl\|\om_{\frac{r}2}\bigr\|_{\dH^\s}^{\frac 2 {r'}   },
\eeno and \beno \|\D_jT(\p_{\rm
h}\om_{r-1},a)\|_{L^{\f{2r}{3r-2}}}&\lesssim& \sum_{|j'-j|\leq
4}\|S_{j'-1}\p_{\rm
h}\om_{r-1}\|_{L^{  r'   }}\|\wt{\D}_{j'}a\|_{L^2}\\
&\lesssim
&c_{j,\f{2r}{3r-2}}2^{-\f{j}2}\|a\|_{\dH^s}\bigl\|\om_{\frac{r}2}\bigr\|_{\dH^\s}^{\frac 2 {r'}   }.
\eeno
 This leads to
 \beq\label{b.2}
 \|A(a,\om)\|_{\dB^{\frac 1 2   }_{\frac{2r}{3r-2},\frac{2r}{3r-2}}} \lesssim  \|a\|_{\dH^s}\bigl\|\om_{\frac{r}2}\bigr\|_{\dH^\s}^{\frac 2 {r'}   }.
 \eeq
While it follows from Lemma \ref{embeda} that
 $$
\dB^{\frac 1 2   }_{\frac{2r}{3r-2},\frac{2r}{3r-2}} \hookrightarrow
L^{\frac{2r}{3r-2}}_{\rm
h}\biggl(\Bigl(\dB^{\frac 1 2   }_{\frac{2r}{3r-2},\frac{2r}{3r-2}}\Bigr)_{\rm v}\biggr).
 $$
Sobolev type embedding theorem (see for instance Theorem~2.40
of  \ccite{BCD}) claims that
 $$
 \dB^{\frac 1 2   }_{\frac{2r}{3r-2},\frac{2r}{3r-2}}(\R) \hookrightarrow \dB^0_{r'   ,2}(\R) \hookrightarrow L^{r'   }(\R).
 $$
 Moreover, since $r'   >\f{2r}{  3r-2},$ we have
 $$ L^{\f{2r}{3r-2}}_{\rm h}(L^{r'   }_{\rm v})\hookrightarrow L^{r'   }_{\rm v}(L^{\f{2r}{3r-2}}_{\rm h}).$$
 As a consequence, by virtue of \eqref{b.2}, we obtain
 \beno
 \Bigl|
\int_{\R^3}
\partial_{\rm h}\D_{\rm h}^{-1}f A(a,\om) \, dx\Bigr| & \lesssim &
\|\partial_{\rm h}\D_{\rm h}^{-1}f\|_{L^{r}_{\rm v}(L^{\f{2r}{2-r}}_{\rm h})}\|A(a,\om)\|_{L^{r'   }_{\rm v}(L^{\f{2r}{3r-2}}_{\rm h})}\\
&\lesssim&
\|f\|_{L^{r}}\|A(a,\om)\|_{\dB^{\frac 1 2   }_{\frac{2r}{3r-2},\frac{2r}{3r-2}}}\\
& \lesssim &
\|f\|_{L^{r}}\|a\|_{\dH^s}\bigl\|\om_{\frac{r}2}\bigr\|_{\dH^\s}^{\frac
2 {r'}   }, \eeno which together with
\refeq{anositropiclemmademoeq2} gives rise to \beno \Bigl|
\int_{\R^3}
\partial_{\rm h}\D_{\rm h}^{-1}f
\partial_{\rm h}a \,\om_{r-1} dx\Bigr|
 \lesssim   \|f\|_{L^{r}}  \|a\|_{\dH^s} \bigl\|\o  m_{\frac{r}2}\bigr\|_{\dH^\s}^{\frac 2
 {r'}}. \eeno

\medbreak
 In order to prove the remaining inequality of \eqref{b.1}, we observe that
 $$
\| \nabla_{\rm h}\D_{\rm h}^{-1} f\|_{\dH^{1-3\al(r)+\theta,-\theta}
}\lesssim \|f\|_{\dH^{-3\al(r)+\theta,-\theta} }=\|f\|_{\cH^{\theta,r}}.
 $$
 Thus thanks to \eqref{b.2}, for $\th$ given by the lemma, it amounts  to prove  that
 \beq\label{b.3} \dB^{\frac 1 2   }_{\frac{2r}{3r-2},\frac{2r}{3r-2}} \hookrightarrow
 \dH^{-1+3\al(r)-\theta,\theta}. \eeq
 As a matter of fact, using Lemma \ref{isoaniso} and Lemma \ref{lemBern}, we have, for any~$\gamma$ in~$]\ds0,1/2   [$,
\beno \dB^{\frac 1 2   }_{\frac{2r}{3r-2},\frac{2r}{3r-2}}
&\hookrightarrow& \bigl(\dB^{\frac 1 2
-\gamma}_{\frac{2r}{3r-2},\frac{2r}{3r-2}}\bigr)_{\rm
h}\bigl(\dB^{\gamma}_{\frac{2r}{3r-2},\frac{2r}{3r-2}}\bigr)_{\rm v}
\andf\\
\bigl(\dB^{\frac 1 2
-\gamma}_{\frac{2r}{3r-2},\frac{2r}  {3r-2}}\bigr)_{\rm
h}\bigl(\dB^{\gamma}_{\frac{2r}{3r-2},\frac{2r}{3r-2}}\bigr)_{\rm v}
&\hookrightarrow&
  \bigl(\dB^{\frac 1 2   -\gamma-\frac 2 {r'}   }_{2,2}\bigr)_{\rm h}\bigl(\dB^{\gamma-\frac 1 {r'} }_{2,2})\bigr)_{\rm v}\\
  &\hookrightarrow&
 \dH^{-\gamma+\frac 2 {r}   -\frac 3 2,\gamma-1+\frac 1 {r}   }.
\eeno Let us choose $\ds\theta\eqdefa\gamma-\frac 1 {r'}   .$ Then
since $\th <\al(r),$ we have $\ds\gamma<\frac 1 2   ,$ which ensures
\eqref{b.3}.  This completes the proof of the lemma.
 \end{proof}

\medbreak The estimate of~$F_2(t)$ uses the Biot-Savart's law in the
horizontal variables  (namely\refeq{a.1}) and
Lemma\refer{anositropiclemma} with~$f=\partial_3\om$,~$a=v^3$. This
gives for any time~$t<T^\star$ and $\s$ in~$
\bigl]\f{r}{4(r-1)},1\bigr[$ that
 $$
 \longformule{
I_\om(t)\eqdefa\Bigl|\int_{\R^3}\bigl(\p_2v^3(t,x)\p_3v_{\rm
curl}^1(t,x)-\p_1v^3(t,x)\p_3v_{\rm curl}^2(t,x) \bigr)
\om_{r-1}(t,x)\,dx\Bigr| }{{}  \lesssim  \|\p_3\om(t) \|_{L^{r}} \|
v^3(t)\|_{\dH^{\frac 3 2-\frac {2\s} {r'}    }}
\bigl\|\om_{\frac{r}2}(t)\bigr\|_{\dH^\s}^{\frac 2 {r'}   }. }
 $$
By virtue of \eqref{estimbasomega34} and of the interpolation
inequalities between~$L^2$ and~$\dH^1$, we thus obtain \beno
 I_\om(t)
&\lesssim
&\|v^3(t)\|_{\dH^{\frac 1 2   +2\left(\frac 1 2   -\frac {\s} {r'}    \right)}}\bigl\|\om_{\f{r}2}(t)\bigr\|_{L^2}^{\frac 2 {r}   -1}\bigl\|\na\om_{\f{r}2}(t)\bigr\|_{L^2}\\
&&\qquad\qquad\qquad\qquad\qquad\qquad\times\bigl\|\om_{\f{r}2}(t)\bigr\|_{L^2}^{\frac 2 {r'} (1-\s)  }\bigl\|\na\om_{\f{r}2}(t)\bigr\|_{L^2}^{\frac {2\s} {r'}    }\\
&\lesssim& \|v^3(t)\|_{\dH^{\frac 1 2   +2\left(\frac 1 2   -\frac
{\s} {r'}    \right)}}
 \bigl\|\om_{\frac{r}2}(t)\bigr\|_{L^2} ^{2\bigl(\frac 1 2   -\frac {\s} {r'}    \bigr)}\bigl\|\nabla\om_{\frac{r}2}(t)\bigr\|_{L^2}^{2\bigl(\frac 1 2   +\frac {\s} {r'}    \bigr)}.
 \eeno
Choosing~$\ds \s = \f{(p-2)r}{2p(r-1)}$, which is between
$\ds\f{r'}{4}$ and~$1$ because~$p$ is between~$4$
and~$\ds \f{2r}{2-r}$, gives
$$
I_\om(t)\lesssim \|v^3(t)\|_{\dot H^{\frac 1 2   +\frac 2 p }}
\bigl\|\om_{\frac{r}2}(t)\bigr\|_{L^2} ^{ \frac 2 p
}\bigl\|\nabla\om_{\frac{r}2}(t  )\bigr\|_{L^2}^{2\left(1 -\frac 1 p
\right)} .
$$
Then  by using convexity inequality and time integration, we get
 \beq
 \label{a.10}
 |F_2(t)| \leq \frac{r-1}{r^2} \int_0^t\bigl\|\nabla \om_{\frac{r}2}(t')\bigr\|_{L^2}^2
 dt' +C \int_0^t  \|v^3(t')\|_{\dH^{\frac 1 2   +\frac 2 p }}^p\bigl\|\om_{\frac{r}2}(t')\bigr\|_{L^2}
 ^{2}\,dt'.
 \eeq

In order to estimate~$F_3(t)$, we write
$$
\longformule{ F_3(t) =- \int_0^t
\int_{\R^3}\Bigl(\p_2v^3(t',x)(\p_1\D_{\rm h}^{-1}\p_3^2v^3)(t',x) }
{{} -\p_1v^3(t',x)(\p_2\D_{\rm h}^{-1}\p_3^2v^3)(t',x) \Bigr)\,
\om_{r-1}(t',x)dxdt'. }$$ Since ~$\ds \s =
\f{(p-2)r}{2p(r-1)},$~$\ds \frac 1 p =\frac 1 2    -\frac {\s} {r'}
$, thanks to interpolation inequality between Sobolev spaces,  we
get, by applying Lemma\refer{anositropiclemma}
with~$f=\partial_3^2v^3$ and ~$a=v^3,$
 that
\beno
 |F_3(t)| & \lesssim & \int_0^t \|\p^2_3v^3(t')
\|_{\cH^{\theta,r}}\| v^3(t')\|_{\dH^{\frac 3 2-\frac {2\s} {r'} }}
\bigl\|\om_{\frac{r}2}(t')\bigr\|_{\dH^\s}^{\frac 2 {r'}   }\,dt'\\
& \lesssim & \int_0^t\|\p^2_3v^3(t') \|_{\cH^{\theta,r}}\|
v^3(t')\|_{\dH^{\frac 1 2   +2\left(\frac 1 2   -\frac {\s} {r'}
\right)}}\bigl\|\om_{\frac{r}2}(t')\bigr\|_{L^2}^{\frac 2 {r'}
(  1-\s) }
\bigl\|\na\om_{\frac{r}2}(t')\bigr\|_{L^2}^{\frac {2\s} {r'}    }\,dt'\\
& \lesssim & \int_0^t\|\p^2_3v^3(t') \|_{\cH^{\theta,r}}
\|v^3(t')\|^{p\al(r)}_{\dH^{\frac 1 2   +\frac 2 p }}\\
&&\qquad\qquad\qquad{}\times\bigl(\|v^3(t')\|^{p}_{\dH^{\frac 1 2   +\frac 2 p }}\bigl\|\om_{\frac{r}2}(t')\bigr\|^2_{L^2}\bigr)^{
\frac 1 p -\al(r)}
\bigl\|\na\om_{\frac{r}2}(t')\bigr\|_{L^2}^{2\left(\frac 1 2   -\frac 1 p \right)}\,dt'.
\eeno As we have
$$
\frac 1 2+ \al(r) +\biggl(\frac 1 p-\al(r)\biggr) +\biggl(\frac 12
-\frac1p\biggr)=1,
$$
 applying H\"older inequality ensures that
$$
\longformule{
 |F_3(t)| \lesssim \Bigl(\int_0^t \|\p^2_3v^3(t')\|^{2}_{\cH^{\theta,r}}\,dt'\Bigr)^{\frac 1{2}}
  \Bigl(\int_0^t \|v^3(t')\|^{p}_{\dH^{\frac 1 2   +\frac 2 p }}\,dt' \Bigr)^{\al(r)}
} { {}\times  \Bigl(\int_0^t \|v^3(t')\|_{\dH^{\frac 1 2   ;  +
\frac 2 p }}^p\bigl\|\om_{\frac{r}2}(t')\bigr\|_{L^2}^{2}\,dt'\Bigr)^{\frac1{p}-\al(r)}
\Bigl(\int_0^t \bigl\|\nabla \om_{\frac{r}2}(t')\bigr\|^2_{L^2}
dt'\Bigr)^{\f12-\f1p}. }
$$
Applying the convexity inequality leads to
 \beq
\label{a.8}
\begin{split}
 |F_3(t)| \leq\,& \frac{r-1}{r^2}  \int_0^t
\bigl\|\nabla \om_{\frac r 2}(t')\bigr\|^2_{L^2} dt' +C \int_0^t
\|v^3(t')\|_{\dH^{\frac 1 2   +
\frac 2 p }}^p\bigl\|\om_{\frac{r}2}(t')\bigr\|_{L^2}
 ^{2}\,dt' \\
&\qquad\qquad\qquad{}+ C\Bigl(\int_0^t \| v^3(t')\|^{p}_{\dH^{\frac
1 2   +\frac 2 p }}\,dt' \Bigr)^{1-\frac{r}2} \Bigl(\int_0^t
\|\p^2_3v^3(t') \|^{2}_{\cH^{\theta,r}}\,dt'\Bigr)^{\frac {r}{2}}.
\end{split} \eeq

\begin{proof}[Conclusion of the proof to
Proposition\refer{inegfondvroticity2D3D}] Resuming the Estimates
\eqref{a.9}, \eqref{a.10} and \eqref{a.8} into
\eqref{theoxplosaniscalingdemoeq1}, we obtain \beno \frac 1 r
\bigl\|\om_{\frac r 2}(t) \bigr\|_{L^2}^2  + \frac{r-1}{r^2}\int_0^t
\|\nabla \om_{\frac r 2}(t')\|_{L^2}^2\,dt' & &
\\
&&
\ \ \ \ \ \ \ \ \ \ \ \ \ \ \ \ \ \ \ \ \ \ \ \ \ \ \ \ \ \ \ \ \ \ \ \ \ \ \ \ \ \ \ \ \ \ \ \ \ \ \ \ \ \ \ \ \ \ \ \ \ \ \ \ \ \ \ \ \ \ \ \ \ \ \ \ \ \ \ \ \ \ \ \ \ \ \ \ \
\leq \frac 1 r \bigl\||\om_0|^{r/2} \bigr\|_{L^2}^2{}+
C\Bigl(\int_0^t \| v^3(t')\|^{p}_{\dH^{\frac 1 2   +\frac 2 p
}}\,dt' \Bigr)^{1-\f{r}2} \Bigl(\int_0^t \|\p^2_3v^3(t')
\|^{2}_{\cH^{\theta,r}}\,dt'\Bigr)^{\f{r}2}\\
&&
\quad {}+C\int_0^t\| v^3(t')\|_{\dH^{\frac 1 2   +\frac 2 p }}^{p} \bigl\|\om_{\frac
r 2}(t')\bigr\|_{L^2}^{2}\,dt'  . \eeno
 Inequality~\eqref{a.10qp} follows from Gronwall lemma once
notice that ~$x^{\frac 14} e^{Cx}\lesssim e^{C'x}$ for~$C'>C$.
\end{proof}


\setcounter{equation}{0}
\section{Proof of the  estimate for the second vertical derivatives of $v^3$}
\label{Sectionestimatedivh}

In this section, we shall present the proof of Proposition
\ref{estimadivhaniso}. Let $\cH^{\theta,r}$ be given by
Definition\refer{def2.1ad}.  We  get, by taking the $\cH^{\theta,r}$ inner
product of the $\p_3v^3$ equation of $(\wt{NS})$ with $\p_3v^3,$
that \beq \label{b.4pu}
\begin{split}
\f12\f{d}{dt}\|\p_3v^3(t)\|_{\cH^{\theta,r}}^2+&\|\na\p_3v^3(t)\|_{\cH^{\theta,r}}^2 =-\sum_{n=1}^3 \bigl(Q_n(v,v) \, |\, \p_3v^3\bigr)_{\cH^{\theta,r}}\with\\
 Q_1(v,v) & \eqdefa \bigl(\Id+\partial_3^2\D^{-1} \bigr)(\partial_3v^3)^2 +\partial_3^2\D^{-1}
  \biggl(\sum_{\ell,m=1}^2\partial_\ell v^m\partial_m  v^\ell\biggr) \,,\\
 Q_2(v,v) & \eqdefa  \bigl(\Id+2\partial_3^2\D^{-1} \bigr)\biggl(\sum_{\ell=1}^2 \partial_3 v^\ell
 \partial_\ell v^3\biggr)\andf\\
 Q_3(v,v) & \eqdefa v\cdot \nabla\partial_3 v^3.
 \end{split}
 \eeq
 The  estimate involving~$Q_1$ relies on    the following lemma. Let us point out that this term~$Q_1$ contains  terms which are quadratic with respect to~$v^{\rm h}_{\rm curl}$.
 \begin{lem}
\label{estimdivhlemme1}
{\sl Let~$A$ be a bounded Fourier multiplier. If~$p$  and~$\theta$ satisfy
 \beq
\label{Conditionenergyaniso} 0<\th<\f12-\f1p\,\virgp
 \eeq
 then we have
 $$
 \bigl|\bigl(A(D) (fg) \,|\,\partial_3v^3\bigr)_{\cH^{\theta,r}}\bigr| \lesssim \|f\|_{\dH^{\frac 1 2   -3\al(r)+\th,\frac 1 2   -\frac 1 p -\theta} }
\|g\|_{\dH^{\frac 1 2   -3\al(r)+\th,\frac 1 2   -\frac 1 p -\theta} } \|v^3\|_{\dH^{\frac 1 2   +
\frac 2 p }}.
 $$
 }
 \end{lem}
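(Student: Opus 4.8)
The plan is to treat the left-hand side as a trilinear form and exploit duality: split it by the Cauchy--Schwarz inequality in the Fourier variable into an anisotropic Sobolev norm of $fg$ times an anisotropic Sobolev norm of $\partial_3v^3$, then control the first factor by the anisotropic product law (Lemma~\ref{lem2.2qw}) and the second factor trivially by $\|v^3\|_{\dH^{\frac12+\frac2p}}$. All the constraints on $p$ and $\theta$ will be exactly what is needed to make the product law applicable.

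First I would recall from Definition~\ref{def2.1ad} that $\cH^{\theta,r}=\dH^{-3\al(r)+\theta,-\theta}$, so that the weight defining $(\cdot\,|\,\cdot)_{\cH^{\theta,r}}$ is $|\xi_{\rm h}|^{-6\al(r)+2\theta}|\xi_3|^{-2\theta}$. Since $A(D)$ is a bounded Fourier multiplier it contributes only a constant. Factoring this weight as the product of $|\xi_{\rm h}|^{-6\al(r)+2\theta}|\xi_3|^{\frac12-\frac2p-2\theta}$ and $|\xi_3|^{-\frac12+\frac2p}$ and applying Cauchy--Schwarz in $\xi$, one gets
\[
\bigl|\bigl(A(D)(fg)\,|\,\partial_3v^3\bigr)_{\cH^{\theta,r}}\bigr|
\lesssim \bigl\|fg\bigr\|_{\dH^{-6\al(r)+2\theta,\,\frac12-\frac2p-2\theta}}\;\bigl\|\partial_3v^3\bigr\|_{\dH^{0,\,-\frac12+\frac2p}} .
\]
The last factor equals $\|v^3\|_{\dH^{0,\frac12+\frac2p}}$ (the derivative $\partial_3$ shifts the vertical index by one), and since $\frac12+\frac2p>0$ we have $|\xi_3|^{2(\frac12+\frac2p)}\le|\xi|^{2(\frac12+\frac2p)}$, hence $\|v^3\|_{\dH^{0,\frac12+\frac2p}}\le\|v^3\|_{\dH^{\frac12+\frac2p}}$. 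The choice of the split is in fact forced: the exponents that end up on $v^3$ must be nonnegative and sum to $\frac12+\frac2p$ to be absorbed by $\|v^3\|_{\dH^{\frac12+\frac2p}}$, while those that end up on $fg$ cannot exceed what the product law below delivers, and these two requirements pin the split down uniquely.

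It then remains to prove the anisotropic product estimate
\[
\bigl\|fg\bigr\|_{\dH^{-6\al(r)+2\theta,\,\frac12-\frac2p-2\theta}}
\lesssim \|f\|_{\dH^{\frac12-3\al(r)+\theta,\,\frac12-\frac1p-\theta}}\,\|g\|_{\dH^{\frac12-3\al(r)+\theta,\,\frac12-\frac1p-\theta}},
\]
which I would obtain directly from Lemma~\ref{lem2.2qw} (Lemma~4.5 of~\cite{CZ5}) applied with $p_1=p_2=q=2$, $s_1=s_2=\frac12-3\al(r)+\theta$ and $\sigma_1=\sigma_2=\frac12-\frac1p-\theta$: since $(\dB^{s}_{2,2})_{\rm h}(\dB^{s'}_{2,2})_{\rm v}=\dH^{s,s'}$, the output exponents $s_1+s_2-1$ and $\sigma_1+\sigma_2-\frac12$ are exactly $-6\al(r)+2\theta$ and $\frac12-\frac2p-2\theta$. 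Verifying the hypotheses is where the assumptions are used: $s_1<\frac2{p_1}=1$ because $\theta<\al(r)$; the condition $s_1+s_2=1-6\al(r)+2\theta>0$ holds because $r\ge\frac32$ forces $\al(r)=\frac1r-\frac12\le\frac16$ and $\theta>0$; $\sigma_i<\frac1{p_i}=\frac12$ is immediate; and the remaining condition $\sigma_1+\sigma_2=1-\frac2p-2\theta>0$ is precisely the hypothesis $\theta<\frac12-\frac1p$ of~\eqref{Conditionenergyaniso}.

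I do not expect a genuine obstacle here: the lemma amounts to duality together with careful bookkeeping of anisotropic Sobolev exponents. The only delicate point is to arrange the frequency split so that it matches the admissible range of the anisotropic product law, and to observe that the stated condition $0<\theta<\frac12-\frac1p$, combined with the standing assumption $r\in[3/2,2[$, is exactly what makes Lemma~\ref{lem2.2qw} applicable with all of its conditions (strictly) satisfied.
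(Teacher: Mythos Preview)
Your proof is correct and follows essentially the same approach as the paper: a Cauchy--Schwarz split of the $\cH^{\theta,r}$ inner product to produce $\|fg\|_{\dH^{-6\al(r)+2\theta,\frac12-\frac2p-2\theta}}\|\partial_3v^3\|_{\dH^{0,-\frac12+\frac2p}}$, followed by the anisotropic product law of Lemma~\ref{lem2.2qw} on the first factor and the trivial embedding $\dH^{0,\frac12+\frac2p}\hookrightarrow\dH^{\frac12+\frac2p}$ on the second. One minor remark: you justify $s_1<1$ by invoking $\theta<\al(r)$, which is not part of the lemma's stated hypotheses (only $0<\theta<\frac12-\frac1p$ is assumed); it would be cleaner to note that $s_1=\frac12-3\al(r)+\theta<\frac12+\theta<1$ follows directly from $\theta<\frac12-\frac1p$ and $\al(r)>0$.
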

 \begin{proof}
 Let us first observe that, for any couple~$(\al,\beta)$ in~$\R^2$, we have, thanks to
Cauchy-Schwartz inequality, that, for any real valued function~$a$
and~$b$, \ben \bigl|(a|b)  _{\cH^{\theta,r}}\bigr|  &  =  &
\Bigl|\int_{\R^3} |\xi_{\rm h}|^{-6\al(r)+2\theta-\alpha}
 |\xi_3|^{-\beta-2\theta} \wh a(\xi)|\xi_{\rm h}|^{\alpha}  \,|\xi_3|^{\beta} \wh b(-\xi)d\xi\Bigr|\nonumber\\
\label{estimdivhlemme1demoeq1}
 & \leq & \|a\|_{\dH^{-6\al(r)+2\theta-\alpha, -\beta-2\theta}} \|b\|_{\dH^{\alpha, \beta}} .
\een  As~$A(D)$ is  a bounded Fourier multiplier, applying
\eqref{estimdivhlemme1demoeq1} with~$\al= 0$ and~$\ds\beta= -\frac 1 2
+\frac 2 p $, we obtain
 \beq
 \label{estimdivhlemme1demoeq2}
\bigl|\bigl(A(D) (fg) \,|\,\partial_3v^3\bigr)_{\cH^{\theta,r}}\bigr|
\lesssim \|fg\|_{\dH^{-6\al(r)+2\theta,\frac 1 2   -\frac 2 p -2\theta}}
\|\partial_3v^3\|_{\dH^{0,-\frac 1 2   +\frac 2 p }} .
\eeq
 Because~$\dH^{s,s'} = \bigl(\dB^s_{2,2}\bigr)_{\rm h} \bigl(\dB^{s'}_{2,2}\bigr)_{\rm v
 },$ $r$ is in~$]3/2,2[,$ $3\al(r)$ is less than~$\frac 1 2  $< BR>  and thanks to Condition\refeq{Conditionenergyaniso},  law of products of Lemma\refer{lem2.2qw} implies in particular  that
 $$
  \|fg\|_{\dH^{-6\al(r)+2\theta,\frac 1 2   -\frac 2 p -2\theta}}  \lesssim \|f\|_{\dH^{\frac 1 2   -3\al(r)+\theta, \frac 1 2   -\frac 1 p -\theta}}
  \|g\|_{\dH^{\frac 1 2   -3\al(r)+\theta,\frac 1 2   -\frac 1 p -\theta}}.
 $$
 Due to Lemma \ref{embeda}, we have
 $$
 \|\partial_3v^3\|_{\dH^{0,-\frac 1 2   +\frac 2 p }} \lesssim \|v^3\|_{\dH^{0,\frac 1 2   + \frac 2 p }}\leq \|v^3\|_{\dH^{\frac 1 2   +\frac 2 p }}
 $$
 and thus  the lemma is proved.
 \end{proof}

 \medbreak
  Because both~$\p_3^2\D^{-1}$ and $\partial_{\rm h}^2\D_{\rm h}^{-1}$ are bounded Fourier
  multipliers,
 applying Lemma \ref{estimdivhlemme1} with~$f$ and~$g$ of the form~$\partial_{\rm h} v^{\rm h}_{\rm curl}$ or~$\partial_{\rm h} v^{\rm h}_{\rm div}$
  or with~$f=g=\partial_3v^3$ gives,
 $$
\bigl| \bigl(Q_1(v,v) \, |\, \p_3v^3\bigr)_{\cH^{\theta,r}}\bigr| \lesssim
\|v^3\|_{\dH^{\frac 1 2   +\frac 2 p }} \Bigl(
 \|\om\|_{\dH^{\frac 1 2   -3\al(r)+\theta, \frac 1 2   -\frac 1 p -\theta}}^2 +\|\partial_3v^3\|_{\dH^{\frac 1 2   -3\al(r)+\theta, \frac 1 2   -\frac 1 p -\theta}}^2\Bigr).
 $$
Because $p>4,$ $r>4/3,$ we have $\f1p+\f1r<1,$ and
$\th<\al(r)<1/2-1/p$ so that the
Condition\refeq{Conditionenergyaniso} is satisfied. Then we get, by
using Lemma \ref{isoaniso} and Lemma\refer{BiotSavartomega}, that
\beq \label{r  q.2}
 \|\om\|_{\dH^{\frac 1 2   -3\al(r)+\theta,
\frac 1 2   -\frac 1 p -\theta}}\leq \|\om\|_{\dH^{1-3\al(r)- \frac 1 p }}\lesssim
\bigl\|\, \om_{\frac{r}2}\bigr\|_{L^2}^{2\al(r)  +\frac 1 p  } \bigl\|\nabla
\om_{\frac{r}2}\bigr\|_{L^2} ^{\frac 1 {p'}   }.
\eeq
While it follows from Definition \ref{def2.1ad} that
\beno
\|a\|_{\dH^{\frac 1 2   -3\al(r)+\theta,
\frac 1 2   - \frac 1 p -\theta}}^2&=&\int_{\R^3}|\xi_{\rm
h}|^{1-6\al(r)+2\th}|\xi_3|^{1-\frac 2 p -2\th}|\widehat{a}(\xi)|^2\,d\xi\\
&\leq&\int_{\R^3}|\widehat{a}(\xi)|^{\frac 2 p
}\bigl(|\xi||\widehat{a}(\xi)|\bigr)^{\frac 2 {p'}} |\xi_{\rm
h}|^{2\left(-3\al(r)+\th\right)}|\xi_3|^{-2\th}\,d\xi. \eeno
Applying H\"older's inequality with measure $|\xi_{\rm
h}|^{2(-3\al(r)+\th)}|\xi_3|^{-2\th}\,d\xi$ yields
\beq
\label{rq.3}
\|a\|_{\dH^{\frac 1 2   -3\al(r)+\theta, \frac 1 2 n  bsp; -\frac 1 p
-\theta}} \leq \|a\|_{\cH^{\theta,r}}^{ \frac 1 p } \|\nabla
a\|_{\cH^{\theta,r}}^{\frac 1 {p'} }.
\eeq We then infer that
$$
{
 \bigl|\bigl(Q_1(v,v) \, |\, \p_3v^3\bigr)_{\cH^{\theta,r}}\bigr| \lesssim \|v^3\|_{\dH^{\frac 1 2   +\frac 2 p }} \Bigl(
\bigl\|\, \om_{\frac r 2}\bigr\|_{L^2}^{ 2\left(\frac 1 p +2\al(r)\right)}
\bigl\|\nabla \om_{\frac r 2}\bigr\|_{L^2}
^{\frac 2 {p'}}}{{} +\|\partial_3v^3\|_{\cH^{\theta,r}}^{\frac 2 p }
\|\nabla
\partial_3v^3\|_{\cH^{\theta,r}}^{\frac 2 {p'}}\Bigr).}
$$
Convexity inequality ensures \beq \label{estimatedivhdemoeq1}
\begin{split}
 \bigl|\bigl(Q_1(v,v) \, |\, \p_3v^3\bigr)_{\cH^{\theta,r}}\bigr| &\leq \frac 1 {6} \|\nabla \partial_3v^3\|_{\cH^{\theta,r}}^2
 + C \|v^3\|_{\dH^{\frac 1 2   +\frac 2 p }}^p \|\partial_3v^3\|_{\cH^{\theta,r}}^2 \\
&\qquad\qquad\qquad{} + C\|v^3\|_{\dH^{\frac 1 2   +\frac 2 p }} \bigl\| \,
\om_{\frac r 2}\bigr\|_{L^2}^{ 2\left(\frac 1 p +2\al(r)  \right)}
\bigl\|\nabla \om_{\frac r 2}\bigr\|_{L^2} ^{\frac 2 {p'}}.
\end{split}
\eeq

\medbreak In order to
estimate~$\bigl(Q_2(v,v)\,|\,\partial_3v^3\bigr)_{\cH^{\theta,r}}$,
we first make the following observation: since $\th>3\al(r)-2/p$ and
$4<p<\f{2r}{2-r},$ we have
$$
 \frac 2p  + 3\alpha(r) -1\leq \th<5\al(r)<\f2p+3\al(r),
 $$
 and hence
 $$
 |\xi_{\rm h}|^{2\left(1-\frac 2 p -6\al(r)+2\th\right)}|\xi_3|^{2\left(\frac 2 p +3\al(r)-2\th\right)}
 \leq
 |\xi_{\rm h}|^{2\left(-3\al(r)+\th\right)}|\xi_3|^{-2\th}|\xi|^2.
 $$
We infer that
 \ben
\|a\|_{\dH^{1-6\al(r)-\frac 2 p +2\th,\frac 2 p +3\al(r)-2\th}}^2&=&\int_{\R^3}|\xi_{\rm
h}|^{2\left(1-\frac 2 p -6\al(r)+2\th\right)}|\xi_3|^{2\left(\frac 2 p +3\al(r)-2\th\right)}|\widehat{a}(\xi)|^2\,d\xi\nonumber\\
&\leq& \int_{\R^3}|\xi_{\rm
h}|^{2\left(-3\al(r)+\th\right)}|\xi_3|^{-2\th}\bigl(|\xi||\widehat{a}(\xi)|\bigr)^2\,d\xi\label{qw.5}\\
&=&\|\na a\|_{\cH^  {\theta,r}}^2. \nonumber \een Along the same lines,
one has \beq \label{pou}
\|a\|_{\dH^{1-6\al(r)+2\th,3\al(r)-2\th}}^2\leq \|\na
a\|_{\cH^{\theta,r}}^2.\eeq While we get by applying Bony's
decomposition  \eqref{pd} in the vertical variable for
$\p_3v^\ell\p_\ell v^3$ that \beno \p_3v^\ell\p_\ell v^3=T^{\rm
v}(\p_3v^\ell,\p_\ell v^3)+\bar{T}^{\rm v}(\p_3v^\ell,\p_\ell
v^3)+R^{\rm v}(\p_3v^\ell,\p_\ell v^3). \eeno The law of product of
Lemma\refer{lem2.2qw} implies that \beno \bigl\|T^{\rm
v}(\p_3v^\ell,\p_\ell v^3)&+&\bar{T}^{\rm v}(\p_3v^\ell,\p_\ell
v^3)\bigr\|_{\dH^{\frac 2 p -1,-3\al(r)-\frac 2 p }}\\
 & \lesssim &
\|\p_3v^\ell\|_{\bigl(\dB^1_{2,1}\bigr)_{\rm h}\bigl(
\dB^{-3\al(r)-\frac 2 p }_{2,1}\bigr)_{\rm v}}\|\partial_\ell v^3\|_{\bigl(\dH^{\frac 2 p -1}\bigr)_{\rm h}\bigl(\dB^{\frac 1 2   }_{2,1}\bigr)_{\rm v}}\\
& \lesssim & \|v^\ell\|_{\bigl(\dB^{1}_{2,1}\bigr)_{\rm
h}\bigl (\dB^{1-3\al(r)-\frac 2 p }_{2,1}\bigr)_{\rm v}}\|v^3\|_{\dH^{
\frac 1 2   +\frac 2 p }} . \eeno As we have~$\|\partial_\ell
v^3\|_{\bigl(\dH^{\frac 2 p -1}\bigr)_{\rm
h}\bigl(\dB^{\frac 1 2   }_{2,1}\bigr)_{\rm v}} \lesssim
\|v^3\|_{\bigl(\dH^{\frac 2 p }\bigr)_{\rm
h}\bigl(\dB^{\frac 1 2   }_{2,1}\bigr)_{\rm v}}\leq \|v^3\|_{\dH^{
\frac 1 2   +\frac 2 p }}.$

Thus we get, by applying \eqref{estimdivhlemme1demoeq1} that \beno
\begin{split}
\Bigl|\Bigl(\bigl(\Id+2&\partial_3^2\D^{-1} \bigr)\sum_{\ell=1}^2
\bigl(T^{\rm v}(\p_3v^\ell,\p_\ell v^3)+\bar{T}^{\rm
v}(\p_3v^\ell,\p_\ell
v^3)\bigr)\,|\,\partial_3v^3\Bigr)_{\cH^{\theta,r}}\Bigr|\\
\lesssim &\bigl\|T^{\rm v}(\p_3v^\ell,\p_\ell v^3)+\bar{T}^{\rm
v}(\p_3v^\ell,\p_\ell v^3)\bigr\|_{\dH^{\frac 2 p -1,-3\al(r)-\frac
2 p }}\|\p_3v^3\|_{\dH^{1-6\al(r)-\f2p+2\th,\f2p+3\al(r)-2\th}}\\
\lesssim & \|v^3\|_{\dH^{\frac 1 2   +\frac 2 p }}\|v^{\rm
h}\|_{\bigl(\dB^{1}_{2,1}\bigr)_{\rm h}\bigl(\dB^{1-3\al(r)-\frac 2
p }_{2,1}\bigr)_{\rm v}}\|\na\p_3v^3\|_{\cH^{\theta,r}}.\end{split}
\eeno

Whereas applying the law of product of Lemma\refer{lem2.2qw} once
again yields \beno \|R^{\rm v}(\p_3v^\ell,\p_\ell
v^3)\|_{\dH^{-6\al(r)+2\th,\frac12-\frac2p-2\th}}\lesssim
\|\p_3v^\ell\|_{\bigl(\dB^1_{2,1}\bigr)_{\rm
h}\bigl(\dB^{-3\al(r)-\frac2p}_{2,1}\bigr)_{\rm v}}\|\p_\ell
v^3\|_{\bigl(\dB^{-6\al(r)+2\th}_{2,2}\bigr)_{\rm
h}\bigl(\dB^{1+3\al(r)-2\th}_{2,2}\bigr)_{\rm v}}, \eeno which
together with \eqref{estimdivhlemme1demoeq2} and \eqref{pou} ensures
\beno \begin{split} \Bigl|\Bigl(\bigl(\Id+2\partial_3^2\D^{-1}
\bigr)&\sum_{\ell=1}^2 R^{\rm v}(\p_3v^\ell,\p_\ell
v^3)\,|\,\partial_3v^3\Bigr)_{\cH^{\theta,r}}\Bigr|\\
\lesssim & \|R^{\rm v}(\p_3v^\ell,\p_\ell
v^3)\|_{\dH^{-6\al(r)+2\th,\frac12-\frac2p-2\th}}\|\p_3v^3\|_{\dH^{0,-\f12+\f2p}}\\
\lesssim &\|v^3\|_{\dH^{\frac 1 2   +\frac 2 p }}\|v^{\rm
h}\|_{\bigl(\dB^{1}_{2,1}\bigr)_{\rm h}\bigl(\dB^{1-3\al(r)-\frac 2
p }_{2,1}\bigr)_{\rm v}}\|\na\p_3v^3  \|_{\cH^{\theta,r}}.\end{split}
\eeno

Therefore, by virtue of Proposition\refer{BiotSavartBesovaniso}, we
infer that
\[
\begin{split}
\bigl|\bigl(Q_2(v,v)\,|\,\partial_3v^3\bigr)_{\cH^{\theta,r}}\bigr|
&\lesssim
 \|v^3\|_{\dH^{\frac 1 2   +\frac 2 p }}\\
 &\ \ \ {}\times \Bigl(
\bigl\|\, \om_{\frac r 2}\bigr\|_{L^2}^{2\left(\al(r) +\frac 1 p \right) }
\bigl\|\nabla \om_{\frac r 2}\bigr\|_{L^2} ^{1-\frac 2 p }
+\|\partial_3v^3\|_{\cH^{\theta,r}}^{\frac 2 p } \|\nabla
\partial_3v^3\|_{\cH^{\theta,r}}^{1-\frac 2 p }\Bigr)
 \|\nabla \partial_3v^3\|_{\cH^{\theta,r}}.
\end{split}
\] Applying convexity inequality yields
\beq
\label{estimatedivhdemoeq2}
\begin{split}
\bigl|\bigl(Q_2(v,v)\,|\,\partial_3v^3\bigr)_{\cH^{\theta,r}}\bigr|\,\leq\,
&\frac 1 {6} \|\nabla \partial_3v^3\|_{\cH^{\theta,r}}^2
 + C \|v^3\|_{\dH^{\frac 1 2   +\frac 2 p }}^p \|\partial_3v^3\|_{\cH^{\theta,r}}^2 \\
&\qquad\qquad{} +  C\|v^3\|_{\dH^{\frac 1 2   +\frac 2 p }}^2 \bigl\|\, \om_{\frac r
2}\bigr\|_{L^2}^{ 4\left(\al(r)+\frac 1 p \right) } \bigl\|\nabla
\om_{\frac34}\bigr\|_{L^2} ^{2\left(1-\frac 2 p \right)}.
\end{split}
\eeq

{} \medbreak Finally let us estimate~$\bigl(Q_3(v,v) \, |\,
\p_3v^3\bigr)_{\cH^{\theta,r}}$.

\begin{lem}
\label{estimatedivhdemoeq3}
We have the following inequality.
$$
\longformule{
\bigl|\bigl( v^{\rm h}\cdot \nabla_{\rm h} \partial_3
v^3\,|\,\partial_3v^3\bigr)_{\cH^{\theta,r}}\bigr|
 \lesssim \|v^3\|_{\dH^{\frac 1 2   +\frac 2 p }}
 \Bigl( \|\na_{\rm h}v^{\rm
h}\|_{\dH^{\frac 1 2   -3\al(r)+\th,\frac 1 2   -\frac 1 p -\th}}^2
}
{
+\|\p_3v^3\|_{\dH^{\frac 1 2   -3\al(r)+\th,\frac 1 2   -\frac 1 p -\th}}^2+\|v^{\rm
h}\|_{\bigl(\dB^{1}_{2,1}\bigr)_{\rm
h}\bigl(\dB^{1-3\al(r)-\frac 2 p }_{2,1}\bigr)_{\rm
v}}\|\na\p_3v^3\|_{\cH^{\theta,r}}\Bigr).
}
$$
\end{lem}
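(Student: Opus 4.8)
The plan is to decompose the bilinear term $v^{\rm h}\cdot\nabla_{\rm h}\partial_3 v^3$ using Bony's paraproduct decomposition in the vertical variable, treating separately the three pieces $T^{\rm v}$, $\bar T^{\rm v}$ and $R^{\rm v}$ of the product $v^\ell\,\partial_\ell\partial_3 v^3$ (for $\ell=1,2$), and then to estimate the $\cH^{\theta,r}$ inner product with $\partial_3v^3$ by pairing each piece in a suitable dual pair of anisotropic Sobolev spaces via \eqref{estimdivhlemme1demoeq1}. The guiding principle is the same as in the estimates of $Q_1$ and $Q_2$: one factor of $v^3$ is always measured in the ``controlled'' norm $\dH^{\frac12+\frac2p}$, and the remaining factors are split between the scaling-$0$ type quantities $\|\p_3v^3\|_{\dH^{\frac12-3\al(r)+\th,\frac12-\frac1p-\th}}$, $\|\na_{\rm h}v^{\rm h}\|_{\dH^{\frac12-3\al(r)+\th,\frac12-\frac1p-\th}}$ and the gain term $\|\na\p_3v^3\|_{\cH^{\theta,r}}$, with one term of the type $\|v^{\rm h}\|_{(\dB^1_{2,1})_{\rm h}(\dB^{1-3\al(r)-\frac2p}_{2,1})_{\rm v}}$ absorbing a vertical low-frequency loss. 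The anisotropic product laws of Lemma\refer{lem2.2qw} and the embedding Lemma\refer{embeda} are the workhorses; the conditions $p>4$, $r>4/3$ and $3\al(r)-\frac2p<\th<\al(r)$ guarantee that all the required index inequalities (namely $s_1<2/p_1$, $\sigma_1<1/p_1$, positivity of the sums) hold.

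First I would write, using divergence-freeness only where convenient, $v^{\rm h}\cdot\nabla_{\rm h}\partial_3v^3=\dive_{\rm h}(v^{\rm h}\partial_3v^3)-(\dive_{\rm h}v^{\rm h})\partial_3v^3=\dive_{\rm h}(v^{\rm h}\partial_3v^3)+(\partial_3v^3)^2$; the quadratic term $(\partial_3v^3)^2$ is already controlled by the $Q_1$-type estimate \eqref{estimatedivhdemoeq1}, so the real object is $\dive_{\rm h}(v^{\rm h}\partial_3v^3)$ paired against $\partial_3v^3$ in $\cH^{\theta,r}$. Moving the horizontal derivative onto $\partial_3v^3$ by \eqref{estimdivhlemme1demoeq1} (with $\alpha=1$, $\beta=-\frac12+\frac2p$) reduces matters to estimating $\|v^{\rm h}\partial_3v^3\|_{\dH^{-6\al(r)+2\th,\frac12-\frac2p-2\th}}$ paired with $\|\partial_3v^3\|$ in a space controlled, via \eqref{qw.5} or Lemma\refer{embeda}, by $\|\na\p_3v^3\|_{\cH^{\theta,r}}$ or by $\|v^3\|_{\dH^{\frac12+\frac2p}}$. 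Here I would apply vertical Bony decomposition to the product $v^{\rm h}\partial_3v^3$: for the paraproduct terms $T^{\rm v}(v^{\rm h},\partial_3v^3)$ and $T^{\rm v}(\partial_3v^3,v^{\rm h})$ I expect the low-frequency factor in $v^{\rm h}$ to be handled in $(\dB^1_{2,1})_{\rm h}(\dB^{1-3\al(r)-\frac2p}_{2,1})_{\rm v}$, producing the last term in the claimed bound; for the remainder term $R^{\rm v}$ the product law of Lemma\refer{lem2.2qw} gives the bound in terms of $\|v^3\|_{\dH^{\frac12+\frac2p}}$ times $\|\na_{\rm h}v^{\rm h}\|_{\dH^{\frac12-3\al(r)+\th,\frac12-\frac1p-\th}}\|\p_3v^3\|_{\dH^{\frac12-3\al(r)+\th,\frac12-\frac1p-\th}}$.

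The main obstacle I anticipate is the bookkeeping of the anisotropic indices so that \emph{one} vertical derivative can always be distributed to yield exactly $\|\na\p_3v^3\|_{\cH^{\theta,r}}$ in the paraproduct terms while keeping horizontal regularity nonnegative where Lemma\refer{lem2.2qw} demands it; this is where the two-sided restriction $3\al(r)-\frac2p<\th<\al(r)$ is genuinely used, and getting the endpoint inequalities $s_1\le 2/p_1$, $\sigma_1\le1/p_1$ right for the $q=1$ summation is delicate. A secondary subtlety is that $v^{\rm h}$ and $\partial_3v^3$ play asymmetric roles—$\partial_3v^3$ carries the ``energy'' norm $\cH^{\theta,r}$ while $v^{\rm h}$ only appears through horizontal derivatives or through the Besov norm tied to Biot--Savart via Proposition\refer{BiotSavartBesovaniso}—so the decomposition must be arranged to never ask for $v^{\rm h}$ itself (without derivative) in a negative-index space it does not belong to. Once the three vertical Bony pieces are estimated, summing them and invoking \eqref{estimatedivhdemoeq1} for the $(\partial_3v^3)^2$ contribution yields the stated inequality.
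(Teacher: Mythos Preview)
Your approach is genuinely different from the paper's and, as written, has a gap.

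The paper does \emph{not} pass to divergence form. It expands the $\cH^{\theta,r}$ inner product via \eqref{rq.5} and applies Bony's decomposition simultaneously in the horizontal and vertical variables to $v^{\rm h}\cdot\nabla_{\rm h}\partial_3v^3$, obtaining nine pieces grouped as $T^{\rm h}T^{\rm v}$, a block $A=T^{\rm h}R^{\rm v}+T^{\rm h}\bar T^{\rm v}$, and a block $B$ containing the remaining six. The $T^{\rm h}T^{\rm v}$ piece is the delicate one: there the paper uses a commutator estimate $[\D_k^{\rm h}\D_\ell^{\rm v},S_{k'-1}^{\rm h}S_{\ell'-1}^{\rm v}v^{\rm h}]$ together with the exact integration-by-parts term $I^3_{k,\ell}$. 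This is the ``quasi-linear'' structure alluded to in Section~2; the commutator converts the low-frequency $v^{\rm h}$ into either $2^{-k}\nabla_{\rm h}v^{\rm h}$ or $2^{-\ell}\partial_3 v^{\rm h}$, producing respectively the $\|\nabla_{\rm h}v^{\rm h}\|_{\dH^{\frac12-3\al(r)+\th,\frac12-\frac1p-\th}}^2$ and the $\|v^{\rm h}\|_{(\dB^1_{2,1})_{\rm h}(\dB^{1-3\al(r)-\frac2p}_{2,1})_{\rm v}}\|\nabla\partial_3v^3\|_{\cH^{\theta,r}}$ contributions.

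Your divergence-form rewriting $v^{\rm h}\cdot\nabla_{\rm h}\partial_3v^3=\dive_{\rm h}(v^{\rm h}\partial_3v^3)+(\partial_3v^3)^2$ followed by the duality move is in principle a global surrogate for that commutator. The difficulty is your claim that a \emph{purely vertical} Bony decomposition of $v^{\rm h}\partial_3v^3$ suffices, with Lemma~\ref{lem2.2qw} handling the horizontal product. Concretely, to produce the term $\|v^3\|_{\dH^{\frac12+\frac2p}}\|\nabla_{\rm h}v^{\rm h}\|_{\dH^{\frac12-3\al(r)+\th,\frac12-\frac1p-\th}}\|\partial_3v^3\|_{\dH^{\frac12-3\al(r)+\th,\frac12-\frac1p-\th}}$ from your $R^{\rm v}$ piece, you must place $v^{\rm h}$ in a space with horizontal index $\frac32-3\al(r)+\th$; since $3\al(r)\le\frac12$ and $\th>0$, this index exceeds $1=2/p_1$, violating the hypothesis of Lemma~\ref{lem2.2qw}. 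The obstruction is exactly the horizontal paraproduct $T^{\rm h}(v^{\rm h},\partial_3v^3)$, where the low-frequency cutoff on $v^{\rm h}$ cannot absorb regularity beyond index $1$. To get past this you would have to perform the horizontal Bony split as well and treat $T^{\rm h}$ separately---at which point you are essentially back to the paper's nine-term decomposition, and the commutator argument (or an equivalent) becomes unavoidable for the $T^{\rm h}T^{\rm v}$ block.
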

\begin{proof}  Let us  use the
following alternative definition for the inner-product in
$\cH^{\theta,r}:$ based on the fact that the space~$\cH^{\theta,r}$
is equal to the space~$\bigl(\dB^{-3\al(r)+\th}_{2,2}\bigr)_{\rm
h}\bigl(\dB^{-\th}_{2,2}\bigr)_{\rm v}$ of
De  finition\refer{anibesov}. \beq \label{rq.5} \bigl(v^{\rm
h}\cdot\na_{\rm h}\p_3v^3\ |\
\p_3v^3\bigr)_{\cH^{\theta,r}}=\sum_{k,\ell\in\Z^2}2^{2k\left(-3\al(r)+\th\right)}2^{-2\ell\th}\bigl(\D_k^{\rm
h}\D_{\ell}^{\rm v}(v^{\rm h}\cdot\na_{\rm h}\p_3v^3)\ |\ \D_k^{\rm
h}\D_{\ell}^{\rm v}\p_3v^3\bigr)_{L^2}. \eeq

By using Bony's decomposition \eqref{pd} to $v^{\rm h}\cdot\na_{\rm
h}\p_3v^3$ for both horizontal and vertical variables, we write that
\beq\label{rq.6}
\begin{split}
v^{\rm h}\cdot\na_{\rm h}\p_3v^3=&\bigl(T^{\rm h}+R^{\rm
h}+\bar{T}^{\rm h}\bigr)\bigl(T^{\rm v}+R^{\rm v}+\bar{T}^{\rm
v}\bigr)(v^{\rm h},\na_{\rm h}\p_3v^3)\\
=&T^{\rm h}T^{\rm v}(v^{\rm h},\na_{\rm h}\p_3v^3)+A+B \with\\
A\eqdefa & T^{\rm h}R^{\rm v}(v^{\rm h},\na_{\rm h}\p_3v^3)+T^{\rm
h}\bar{T}^{\rm v}(v^{\rm
h},\na_{\rm h}\p_3v^3)\\
B\eqdefa&R^{\rm h}T^{\rm v}(v^{\rm h},\na_{\rm h}\p_3v^3)+R^{\rm
h}R^{\rm v}(v^{\rm h},\na_{\rm h}\p_3v^3)+R^{\rm h}\bar{T}^{\rm
v}(v^{\rm
h},\na_{\rm h}\p_3v^3)\\
&{}+\bar{T}^{\rm h}{T}^{\rm v}(v^{\rm h},\na_{\rm
h}\p_3v^3)+\bar{T}^{\rm h}R^{\rm v}(v^{\rm h},\na_{\rm
h}\p_3v^3)+\bar{T}^{\rm h}\bar{T}^{\rm v}(v^{\rm h},\na_{\rm
h}\p_3v^3). \end{split} \eeq

 \no{\Large $\bullet$} The estimate of $\bigl(\D_k^{\rm h}\D_\ell^{\rm v}T^{\rm
h}T^{\rm v}(v^{\rm h},\na_{\rm h}\p_3v^3)\ |\ \D_k^{\rm
h}\D_\ell^{\rm v}\p_3v^3\bigr)_{L^2}.$

By applying  commutator's argument and also considering the support
to the Fourier transform of the terms in $T^{\rm h}T^{\rm v}(v^{\rm
h},\na_{\rm h}\p_3v^3),$ we write
\beno
\begin{split}
\bigl(\D_k^{\rm h}\D_\ell^{\rm v}&T^{\rm h}T^{\rm v}(v^{\rm
h},\na_{\rm h}\p_3v^3)\ |\ \D_k^{\rm h}\D_\ell^{\rm v}\p_3v^3\bigr)_{L^2}
\eqdefa I_{k,\ell}^1+I_{k,\ell}^2+I_{k,\ell}^3\with \\
 I_{k,\ell}^1 &\eqdefa
 \sum_{\substack{|k'-k|\leq 4\\|\ell'-\ell|\leq
4}}\Bigl(\bigl[\D_k^{\rm h}\D_\ell^{\rm v}, S_{k'-1}^{\rm
h}S_{\ell'-1}^{\rm v}v^{\rm h}\bigr]\D_{k'}^{\rm h}\D_{\ell'}^{\rm
v}\na_{\rm h}\p_3v^3\ \big|\ \D_k^{\rm h}\D_\ell^{\rm v}\p_3v^3\Bigr)_{L^2},\\
I_{k,\ell}^2&\eqdefa  \sum_{\substack{|k'-k|\leq 4\\|\ell'-\ell|\leq
4}}
\Bigl( \bigl(S_{k'-1}^{\rm h}S_{\ell'-1}^{\rm v}v^{\rm
h}-S_{k-1}^{\rm h}S_{\ell-1}^{\rm v}v^{\rm h}\bigr)
\D_{k'}^{\rm h}\D_{\ell'}^{\rm v}\D_{k}^{\rm h}\D_{\ell}^{\rm v}\  na_{\rm
h}\p_3v^3\,\big |\, \D_k^{\rm h}\D_\ell^{\rm
v}\p_3v^3\Bigr)_{L^2}\ \ \andf \\
 I_{k,\ell}^3 &\eqdefa-\f12\bigl( S_{k-1}^{\rm h}S_{\ell-1}^{\rm v}\dive_{\rm h}v^{\rm
h}\D_{k}^{\rm h}\D_{\ell}^{\rm v}\p_3v^3\ |\ \D_k^{\rm
h}\D_\ell^{\rm v}\p_3v^3\bigr)_{L^2}.
\end{split}
\eeno It follows from  a standard commutator's estimate  (see for instance\ccite{BCD}) that
$$\longformule{ \bigl|I_{k,\ell}^1\bigr|\lesssim
\sum_{\substack{|k'-k|\leq 4\\|\ell'-\ell|\leq 4}}\Bigl(2^{-k}\|
S_{k'-1}^{\rm h}S_{\ell'-1}^{\rm v}\na_{\rm h}v^{\rm
h}\|_{L^\infty}}{{}+2^{-\ell}\| S_{k'-1}^{\rm h}S_{\ell'-1}^{\rm
v}\p_3v^{\rm h}\|_{L^\infty}\Bigr)\|\D_{k'}^{\rm h}\D_{\ell'}^{\rm
v}\na_{\rm h}\p_3v^3\|_{L^2}\|\D_k^{\rm h}\D_\ell^{\rm
v}\p_3v^3\|_{L^2}.}$$ Note that applying Lemma \ref{lemBern} gives
\beno \| S_{k'-1}^{\rm h}S_{\ell'-1}^{\rm v}\na_{\rm h}v^{\rm
h}\|_{L^\infty}\lesssim
2^{k'\left(\frac 1 2   +3\al(r  )-\th\right)}2^{\ell'\left(\frac 1 p +\th\right)}\|\na_{\rm
h}v^{\rm h}\|_{\dH^{\frac 1 2   -3\al(r)+\th,\frac 1 2   -\frac 1 p -\th}}, \eeno
from which, we infer that
\beno
\begin{split}
&2^{-k}\sum_{\substack{|k'-k|\leq 4\\|\ell'-\ell|\leq 4}}\|
S_{k'-1}^{\rm h}S_{\ell'-1}^{\rm v}\na_{\rm h}v^{\rm
h}\|_{L^\infty}\|\D_{k'}^{\rm h}\D_{\ell'}^{\rm v}\na_{\rm
h}\p_3v^3\|_{L^2}\|\D_k^{\rm h}\D_\ell^{\rm
v}\p_3v^3\|_{L^2}\\
&\quad\lesssim\sum_{\substack{|k'-k|\leq 4\\|\ell'-\ell|\leq
4}}c_{k',\ell'}2^{2k'\left(3\al(r)-\th\right)}2^{\ell'\left(-\frac 1 2   +\frac 2 p +2\th\right)}\|\na_{\rm
h}v^{\rm
h}\|_{\dH^{\frac 1 2   -3\al(r)+\th,\frac 1 2   -\frac 1 p -\th}}\\
&\qquad\qquad\qquad\qquad\qquad\times\|\p_3v^3\|_{\dH^{\frac 1 2   -3\al(r)+\th,\frac 1 2   -\frac 1 p -\th}}c_{k,\ell}2^{\ell\left(\frac 1 2   -\frac 2 p \right)}\|\p_3v^3\|_{\dH^{0,-\frac 1 2    +\frac 2 p }}\\
&\quad\lesssim
d_{k,\ell}2^{2k\left(3\al(r)-\th\right)}2^{2\ell\th}\|v^3\|_{\dH^{\frac 1 2   +\frac 2 p }}\Bigl(\|\na_{\rm
h}v^{\rm
h}\|_{\dH^{\frac 1 2   -3\al(r)+\th,\frac 1 2   -\frac 1 p -\th}}^2+
\|\p_3v^3\|_{\dH^{\frac 1 2   -3\al(r)+\th,\frac 1 2   -\frac 1 p -\th}}^2\Bigr).
\end{split}
\eeno
Here and in what follows, we always denote
$\bigl(c_{k,\ell}\bigr)_{k,\ell\in\Z^2}$ (resp.
$\bigl(d_{k,\ell}\bigr)_{k,\ell\in\Z^2}$) to be a generic element of
the sphere in~$\ell^2(\Z^2)$ (resp. $\ell^1(\Z^2)$). The same
estimate holds for $I_{k,\ell}^3.$

Likewise, since \beno \| S_{k'-1}^{\rm h}S_{\ell'-1}^{\rm
v}\p_3v^{\rm h}\|_{L^\infty}\lesssim
2^{\ell'\left(\frac 1 2   +3\al(r)+\frac 2 p \right)}\|v^{\rm
h}\|_{\bigl(\dB^1_{2,1}\bigr)_{\rm
h}\bigl(\dB^{1-3\al(r)-\frac 2 p }_{2,1}\bigr)_{\rm v}}, \eeno and
$\|\na_{\rm h}v^3\|_{\bigl(\dH^{-1+\frac 2 p }\bigr)_{\rm
h}\bigl(\dB^{\frac 1 2   }_{2,1}\bigr)_{\rm v}}\lesssim
\|v^3\|_{\bigl(\dH^{\frac 2 p }\bigr)_{\rm
h}\bigl(\dB^{\frac 1 2   }_{2,1}\bigr)_{\rm v}}\lesssim
\|v^3\|_{\dH^{\frac 1 2   +\frac 2 p }},$  we have \beno
\begin{split}
&2^{-\ell}\sum_{\substack{|k'-k|\leq 4\\|\ell'-\ell|\leq 4}}\|
S_{k'-1}^{\rm h}S_{\ell'-1}^{\rm v}\p_3v^{\rm
h}\|_{L^\infty}\|\D_{k'}^{\rm h}\D_{\ell'}^{\rm v}\na_{\rm
h}\p_3v^3\|_{L^2}\|\D_k^{\rm h}\D_\ell^{\rm v}\p_3v^3\|_{L^2}\\
&\quad\lesssim 2^{-\ell} \sum_{\substack{|k'-k|\leq
4\\|\ell'-\ell|\leq
4}}c_{k',\ell'}2^{k'\left(1-\frac 2  p \right)}2^{\ell'\left(1+3\al(r)+\frac 2 p \right)}\|v^{\rm
h}\|_{\bigl(\dB^1_{2,1}\bigr)_{\rm
h}\bigl(\dB^{1-3\al(r)-\frac 2 p }_{2,1}\bigr)_{\rm v}}\|\na_{\rm
h}v^3\|_{\bigl(\dH^{-1+\frac 2 p }\bigr)_{\rm
h}\bigl(\dB^{\frac 1 2   }_{2,1}\bigr)_{\rm v}}\\
&\qquad\qquad\qquad\times
c_{k,\ell}2^{-k\left(1-\frac 2 p -6\al(r)+2\th\right)}2^{-\ell\left(\frac 2 p +3\al(r)-2\th\right)}\|\p_3v^3\|_{\dH^{1-\frac 2 p -6\al(r)+2\th,\frac 2 p +3\al(r)-2\th}}\\
&\quad\lesssim d_{k,\ell}
2^{2k\left(3\al(r)-\th\right)}2^{2\ell\th}\|v^3\|_{\dH^{\frac 1 2   +\frac 2 p }}\|v^{\rm
h}\|_{\bigl(\dB^1_{2,1}\bigr)_{\rm
h}\bigl(\dB^{1-3\al(r)-\frac 2 p }_{2,1}\bigr)_{\rm
v}}\|\p_3v^3\|_{\dH^{1-\frac 2 p -6\al(r)+2\th,\frac 2 p +3\al(r)-2\th}}.
\end{split}
\eeno

Therefore, by virtue of \eqref{qw.5}, we obtain \beq\label{rq.7}
\begin{split}
\bigl|I_{k,\ell}^1\bigr|\lesssim &
d_{k,\ell}2^{2k\left(3\al(r)-\th\right)}2^{2\ell\th}\|v^3\|_{\dH^{\frac 1 2   +\frac 2 p }}\Bigl(\|\na_{\rm
h}v^{\rm h}\|_{\dH^{\frac 1 2   -3\al(r)+\th,\frac 1 2   -\frac 1 p -\th}}^2\\
&{}+ \|\p_3v^3\|_{\dH^{\frac 1 2   -3\al(r)+\th,\frac 1 2   -\frac 1 p -\th}}^2+\|v^{\rm
h}\|_{\bigl(\dB^1_{2,1}\bigr)_{\rm
h}\bigl(\dB^{1-3\al(r)-\frac 2 p }_{2,1}\bigr)_{\rm
v}}\|\na\p_3v^3\|_{\cH^{\theta,r}}\Bigr).
\end{split}
\eeq The same argument gives the same estimate for $I_{k,\ell}^2.$
We thus conclude that
$$
\bigl(\D_k^{\rm h}\D_\ell^{\rm v}T^{\rm
h}T^{\rm v}(v^{\rm h},\na_{\rm h}\p_3v^3)\ |\ \D_k^{\rm
h}\D_\ell^{\rm v}\p_3v^3\bigr)_{L^2}
$$
 also verifies the Estimate
\eqref{rq.7}.\\

 \no{\Large $\bullet$} The estimate of $\bigl(\D_k^{\rm h}\D_\ell^{\rm v}A\ |\ \D_k^{\rm
h}\D_\ell^{\rm v}\bigr)_{L^2}.$

We first get, by applying Lemma \ref{lemBern}, that \beq\label{qw.8}
\begin{split}
&\|S_{k'-1}^{\rm h}\D_{\ell'}^{\rm v}v^{\rm h}\|_{L^\infty_{\rm
h}(L^2_{\rm v})}\lesssim
2^{-\ell'\left(1-3\al(r)-\frac 2 p \right)}\|v^{\rm
h}\|_{\bigl(\dB^1_{2,1}\bigr)_{\rm
h}\bigl(\dB^{1-3\al(r)-\frac 2 p }_{2,1}\bigr)_{\rm v}},\\
&\|\D_{k'}^{\rm h}{S}_{\ell'-1}^{\rm v}\na_{\rm
h}\p_3v^3\|_{L^2_{\rm h}(L^\infty_{\rm v})}\lesssim
c_{k',\ell'}2^{k'\left(1-\frac 2 p \right)}2^{\ell'}\|v^3\|_{\bigl(\dH^{\frac 2 p }\bigr)_{\rm
h}\bigl(\dB^{\frac 1 2   }_{2,1}\bigr)_{\rm v}}.
\end{split}
\eeq In view of \eqref{qw.8}, Lemma \ref{lemBern}, and also
considering the support to the Fourier transform to terms in $
T^{\rm h}R^{\rm v}(v^{\rm h},\na_{\rm h}\p_3v^3),$  we write \beno
\begin{split}
\|&\D_k^{\rm h}\D_\ell^{\rm v} T^{\rm h}R^{\rm v}(v^{\rm h},\na_{\rm
h}\p_3v^3)\|_{L^2} \lesssim {}2^{\ell/2}\sum_{\substack{|k'-k  |\leq
4\\\ell'\geq\ell-3}}\|S_{k'-1}^{\rm h}\D_{\ell'}^{\rm v}v^{\rm
h}\|_{L^\infty_{\rm h}(L^2_{\rm v})}\|\D_{k'}^{\rm
h}\wt{\D}_{\ell'}^{\rm v}\na_{\rm h}\p_3v^3\|_{L^2}\\
&\qquad\qquad\qquad\lesssim {}2^{\ell/2}\sum_{\substack{|k'-k|\leq
4\\\ell'\geq\ell-3}}c_{k',\ell'}2^{2k'\left(\frac 1 p +3\al(r)-\th\right)}2^{-\ell'(1-2\th)}
\|v^{\rm h}\|_{\bigl(\dB^1_{2,1}\bigr)_{\rm
h}\bigl(\dB^{1-3\al(r)-\frac 2 p }_{2,1}\bigr)_{\rm
v}}\\
&\qquad\qquad\qquad\qquad\qquad\qquad\qquad\qquad\qquad\qquad\qquad\qquad\times\|\p_3v^3\|_{\dH^{1-\frac 2 p -6\al(r)+2\th,\frac 2 p +3\al(r)-2\th}},
\end{split}
\eeno which gives $$\longformule{\|\D_k^{\rm h}\D_\ell^{\rm v}
T^{\rm h}R^{\rm v}(v^{\rm h},\na_{\rm h}\p_3v^3)\|_{L^2} \lesssim
c_{k,\ell}2^{2k\left(\frac 1 p
+3\al(r)-\th\right)}2^{-\ell\left(\frac 1 2
-2\th\right)}}{{}\times\|v^{\rm h}\|_{\bigl(\dB^1_{2,1}\bigr)_{\rm
h}\bigl(\dB^{1-3\al(r)-\frac 2 p }_{2,1}\bigr)_{\rm
v}}\|\p_  3v^3\|_{\dH^{1-\frac 2 p -6\al(r)+2\th,\frac 2 p
+3\al(r)-2\th}}. }$$ Therefore since \beno \|\D_k^{\rm
h}\D_\ell^{\rm v}\p_3v^3\|_{L^2}&\lesssim&
c_{k,\ell}2^{-\f{2k}p}2^{\f{\ell}2}\|v^3\|_{\bigl(\dH^{\frac 2 p
}\bigr)_{\rm
h}\bigl(\dB^{\frac 1 2   }_{2,1}\bigr)_{\rm v}}\\
&\lesssim& c_{k,\ell}2^{-\f{2k}p}2^{\f\ell2}\|v^3\|_{\dH^{\frac 1 2
+\frac 2 p }}, \eeno we obtain \beq\label{qw.7}
\begin{split}
|\bigl(\D_k^{\rm h}\D_\ell^{\rm v}&T^{\rm h}R^{\rm v}(v^{\rm
h},\na_{\rm h}\p_3v^3)\ |\ \D_k^{\rm h}\D_\ell^{\rm
v}\p_3v^3\bigr)_{L^2}\bigr| \lesssim
d_{k,\ell}2^{2k\left(3\al(r)-\th\right)}2^{2\ell\th}\|v^3\|_{\dH^{\frac 1 2   +\frac 2 p }}\\
 &\qquad\qquad\times\|v^{\rm
h}\|_{\bigl(\dB^1_{2,1}\bigr)_{\rm
h}\bigl(\dB^{1-3\al(r)-\frac 2 p }_{2,1}\bigr)_{\rm
v}}\|\p_3v^3\|_{\dH^{1-\frac 2 p -6\al(r)+2\th,\frac 2 p +3\al(r)-2\th}}.
\end{split}
\eeq

 Along the same lines, we infer from \eqref{qw.8} that \beno
\begin{split}
\|\D_k^{\rm h}\D_\ell^{\rm v} T^{\rm h}\bar{T}^{\rm v}(v^{\rm
h},\na_{\rm h}\p_3v^3)\|_{L^2} \lesssim&
{}\sum_{\substack{|k'-k|\leq 4\\|\ell'-\ell|\leq 4}}\|S_{k'-1}^{\rm
h}\D_{\ell'}^{\rm v}v^{\rm h}\|_{L^\infty_{\rm h}(L^2_{\rm
v})}\|\D_{k'}^{\rm
h}{S}_{\ell'-1}^{\rm v}\na_{\rm h}\p_3v^3\|_{L^2_{\rm h}(L^\infty_{\rm v})}\\
\lesssim&
c_{k,\ell}2^{k\left(1-\frac 2 p \right)}2^{\ell\left(3\al(r)+\frac 2 p \right)}\\
&\qquad{}\times\|v^{\rm h}\|_{\bigl(\dB^1_{2,1}\bigr)_{\rm
h}\bigl(\dB^{1-3\al(r)-\frac 2 p }_{2,1}\bigr)_{\rm
v}}\|v^3\|_{\bigl(\dH^{\frac 2 p }\bigr)_{\rm
h}\bigl(\dB^{\frac 1 2   }_{2,1}\bigr)_{\rm v}},
\end{split}
\eeno so that we obtain \beno
\begin{split}
|\bigl(\D_k^{\rm h}\D_\ell^{\rm v}&T^{\rm h}\bar{T}^{\rm v}(v^{\rm
h},\na_{\rm h}\p_3v^3)\ |\ \D_k^{\rm h}\D_\ell^{\rm
v}\p_3v^3\bigr)_{L^2  }\bigr| \lesssim
d_{k,\ell}2^{2k\left(3\al(r)-\th\right)}2^{2\ell\th}\|v^3\|_{\dH^{\frac 1 2   +\frac 2 p }}\\
 &\qquad\qquad\times\|v^{\rm
h}\|_{\bigl(\dB^1_{2,1}\bigr)_{\rm
h}\bigl(\dB^{1-3\al(r)-\frac 2 p }_{2,1}\bigr)_{\rm
v}}\|\p_3v^3\|_{\dH^{1-\frac 2 p -6\al(r)+2\th,\frac 2 p +3\al(r)-2\th}},
\end{split}
\eeno from which, \eqref{qw.5} and \eqref{qw.7}, we deduce that
\beq\label{rq.8}
\begin{split}
|\bigl(\D_k^{\rm h}\D_\ell^{\rm v}&A\ |\ \D_k^{\rm h}\D_\ell^{\rm
v}\p_3v^3\bigr)_{L^2}\bigr| \lesssim
d_{k,\ell}2^{2k\left(3\al(r)-\th\right)}2^{2\ell\th}\|v^3\|_{\dH^{\frac 1 2   +\frac 2 p }}\\
 &\qquad\qquad\qquad\qquad\qquad\times\|v^{\rm
h}\|_{\bigl(\dB^1_{2,1}\bigr)_{\rm
h}\bigl(\dB^{1-3\al(r)-\frac 2 p }_{2,1}\bigr)_{\rm
v}}\|\na\p_3v^3\|_{\cH^{\theta,r}}.
\end{split}
\eeq

\no{\Large $\bullet$} The estimate of $\bigl(\D_k^{\rm
h}\D_\ell^{\rm v}B\ |\ \D_k^{\rm h}\D_\ell^{\rm v}\bigr)_{L^2}.$

Again considering the support to the Fourier transform to terms in $
R^{\rm h}R^{\rm v}(v^{\rm h},\na_{\rm h}\p_3v^3),$  we get, by
applying Lemma \ref{lemBern}, that \beno
\begin{split}
\|&\D_k^{\rm h}\D_\ell^{\rm v} R^{\rm h}R^{\rm v}(v^{\rm h},\na_{\rm
h}\p_3v^3)\|_{L^2} \lesssim {}2^k2^{\ell/2}\sum_{\substack{k'\geq k-
3\\\ell'\geq\ell-3}}\|\D_{k'}^{\rm h}\D_{\ell'}^{\rm v}v^{\rm
h}\|_{L^2}\|\wt{\D}_{k'}^{\rm
h}\wt{\D}_{\ell'}^{\rm v}\na_{\rm h}\p_3v^3\|_{L^2}\\
&\lesssim 2^k2^{\ell/2}\sum_{\substack{k'\geq k-
3\\\ell'\geq\ell-3}}c_{k',\ell'}2^{-k'\left(1-6\al(r)+2\th\right)}2^{-\ell'\left(1-\frac 2 p -2\th\right)}\\
&\qquad\qquad\qquad\qquad\times\|\na_{\rm h}v^{\rm
h}\|_{\dH^{\frac 1 2   -3\al(r)+\th,\frac 1 2   -\frac 1 p -\th}}\|\p_3v^3\|_{\dH^{\frac 1 2   -3\al(r)+\th,\frac 1 2   -\frac 1 p -\th}}\\
&\lesssim
c_{k,\ell}2^{2k(3\al(r)-\th)}2^{-\ell\left(\frac 1 2  nbsp;  -\frac 2 p -2\th\right)}\|\na_{\rm
h}v^{\rm
h}\|_{\dH^{\frac 1 2   -3\al(r)+\th,\frac 1 2   -\frac 1 p -\th}}\|\p_3v^3\|_{\dH^{\frac 1 2   -3\al(r)+\th,\frac 1 2   -\frac 1 p -\th}},
\end{split}
\eeno by using the fact that $3\al(r)-\frac 1 2   <0<\th<\frac 1 2   -\frac 2 p .$

Likewise, we have \beno
\begin{split}
\|&\D_k^{\rm h}\D_\ell^{\rm v} R^{\rm h}T^{\rm v}(v^{\rm h},\na_{\rm
h}\p_3v^3)\|_{L^2} \lesssim {}2^k\sum_{\substack{k'\geq k-
3\\|\ell'-\ell|\leq 4}}\|\D_{k'}^{\rm h}S_{\ell'-1}^{\rm v}v^{\rm
h}\|_{L^2_{\rm h}(L^\infty_{\rm v})}\|\wt{\D}_{k'}^{\rm
h}{\D}_{\ell'}^{\rm v}\na_{\rm h}\p_3v^3\|_{L^2}\\
&\lesssim 2^k\sum_{\substack{k'\geq k- 3\\|\ell'-\ell|\leq 4}}
c_{k',\ell'}2^{-k'\left(1-6\al(r)+2\th\right)}2^{-\ell'\left(\frac 1 2   -\frac 2 p -2\th\right)}\\
&\qquad\qquad\qquad\qquad\times \|\na_{\rm h}v^{\rm
h}\|_{\dH^{\frac 1 2   -3\al(r)+\th,\frac 1 2   -\frac 1 p -\th}}\|\p_3v^3\|_{\dH^{\frac 1 2   -3\al(r)+\th,\frac 1 2   -\frac 1 p -\th}}\\
&\lesssim
c_{k,\ell}2^{2k(3\al(r)-\th)}2^{-\ell\left(\frac 1 2   -\frac 2 p -2\th\right)}\|\na_{\rm
h}v^{\rm
h}\|_{\dH^{\frac 1 2   -3\al(r)+\th,\frac 1 2     -\frac 1 p -\th}}\|\p_3v^3\|_{\dH^{\frac 1 2   -3\al(r)+\th,\frac 1 2   -\frac 1 p -\th}}.
\end{split}
\eeno It is easy to check that all the remaining terms in $B$ given
by \eqref{rq.6} share the same estimate. Therefore, we obtain
\beq\label{rq.9}\begin{split} \bigl|\bigl(\D_k^{\rm h}\D_\ell^{\rm
v}B\ |\ \D_k^{\rm h}\D_\ell^{\rm v}\bigr)_{L^2}\bigr|\lesssim&
\|\D_k^{\rm
h}\D_\ell^{\rm v}B\|_{L^2}\|\D_k^{\rm h}\D_\ell^{\rm v}\|_{L^2}\\
\lesssim& d_{k,\ell}2^{2k(3\al(r)-\th)}2^{2\ell\th}\|\na_{\rm
h}v^{\rm
h}\|_{\dH^{\frac 1 2   -3\al(r)+\th,\frac 1 2   -\frac 1 p -\th}}\\
&\qquad\times\|\p_3v^3\|_{\dH^{\frac 1 2   -3\al(r)+\th,\frac 1 2   -\frac 1 p -\th}}\|\p_3v^3\|_{\dH^{0,-\frac 1 2   +\frac 2 p }}.
\end{split} \eeq
Inserting the Estimates \eqref{rq.7}, \eqref{rq.8} and \eqref{rq.9}
in \eqref{rq.6} leads to Lemma \ref{estimatedivhdemoeq3}.\end{proof}

Thanks to Lemma \ref{estimatedivhdemoeq3}, we get, by applying
\eqref{a.1} and Proposition \ref{BiotSavartBesovaniso}, that \beno
\begin{split}
\bigl|\bigl( v^{\rm h}\cdot &\nabla_{\rm h} \partial_3
v^3\,|\,\partial_3v^3\bigr)_{\cH^{\theta,r}}\bigr| \\
\lesssim &
\|v^3\|_{H^{\frac 1 2   +\frac 2 p }}\left(\|\om\|_{\dH^{\frac 1 2   -3\al(r)+\th,\frac 1 2   -\frac 1 p -\th}}^{2}+\|\p_3v^3\|_{\dH^{\frac 1 2   -3\al(r)+\th,\frac 1 2   -\frac 1 p -\th}}^{2}\right.\\
&\qquad\left.+\Bigl(\bigl\|\,\om_{\f{r}2}\bigr\|_{L^2}^{2\left(\al(r)+\frac 1 p \right)}
\bigl\|\nabla \om_{\frac r
2}\bigr\|_{L^2}^{1-\frac 2 p }+\|\partial_3v^3\|_{\cH^{\theta,r}}^{\frac 2 p }
\|\nabla
\partial_3v^3\|_{\cH^{\theta,r}}^{1-\frac 2 p }\Bigr)
\|\nabla\partial_3v^3\|_{\cH^{\theta,r}}\right),
\end{split}
\eeno from which, \eqref{rq.2} and \eqref{rq.3}, we infer \beq
\label{rq.4} \begin{split} \bigl |\bigl(& v^{\rm h}\cdot \nabla_{\rm
h}
\partial_3 v^3\,|\,\partial_3v^3\bigr)_{\cH^{\theta,r}}\bigr|\lesssim
\|v^3\|_{H^{\frac 1 2   +\frac 2 p }}\left(\bigl\|\,\om_{\f{r}2}\bigr\|_{L^2}^{2\left(2\al(r)+\frac 1 p \right)}
\bigl\|\nabla \om_{\frac r 2}\bigr\|_{L^2} ^{\frac 2 {p'}}\right.\\
&\qquad\qquad\left.+\Bigl(\bigl\|\,\om_{\f{r}2}\bigr\|_{L^2}^{2\left(\al(r)+\frac 1 p \right)}
\bigl\|\nabla \om_{\frac r
2}\bigr\|_{L^2}^{1-\frac 2 p }+\|\partial_3v^3\|_{\cH^{\theta,r}}^{\frac 2 p }
\|\nabla
\partial_3v^3\|_{\cH^{\theta,r}}^{1-\frac 2 p }\Bigr)
\|\nabla\partial_3v^3\|_{\cH^{\theta,r}}\right).
\end{split} \eeq

To estimate~$\bigl( v^{3} \partial_3^2
v^3\,|\,\partial_3v^3\bigr)_{\cH^{\theta,r}}$, we write, according to
\eqref{estimdivhlemme1demoeq1}, that
$$
\bigl|(f\,|\,g)_{\cH^{\theta,r}}\bigr| \leq
\|f\|_{\dH^{-1-3\al(r)+\frac 2 p +\th,-\theta}}\|g\|_{\dH^{1-3\al(r)-\frac 2 p +\theta,-\theta}}.
$$ As $\th>3\al(r)-\f2p,$
we get, by applying law of product  of Lemma\refer{lem2.2qw}  and then
Lemma\refer{isoaniso}, that \beno \bigl|\bigl( v^{3} \partial_3^2
v^3\,|\,\partial_3v^3\bigr)_{\cH^{\theta,r}}\bigr|
& \leq & \|v^3\partial_3^2v^3\|_{\dH^{-1-3\al(r)+\frac 2 p +\th,-\theta}}\|\partial_3v^3\|_{\dH^{1-3\al(r)-\frac 2 p +\theta,-\theta}}\\
& \lesssim &
 \|v^3\|_{\bigl(\dH^{\frac 2 p }\bigr)_{\rm h}\bigl(B^{\frac 1 2   }_{2,1}\bigr)_{\rm v}}
 \| \partial^2_3v^3\|_{\cH^{\theta,r}}\|\partial_3v^3\|_{\dH^{1-3\al(r)-\frac 2 p +\theta,-\theta}}  \\
 & \lesssim & \|v^3\|_{\dH^{  \frac 1 2   +\frac 2 p }}\|
 \partial^2_3v^3\|_{\cH^{\theta,r}}\|\partial_3v^3\|_{\dH^{1-3\al(r)-\frac 2 p +\theta,-\theta}}.
 \eeno
This along with the interpolation inequality which  claims that
 \beno
 \|a\|_{\dH^{1-3\al(r)-\frac 2 p +\theta,-\theta}}^2&=&\int_{\R^3}|\xi_{\rm
 h}|^{2\left(1-\frac 2 p \right)}|\xi_{\rm
 h}|^{-6\al(r)+2\th}|\xi_3|^{-2\th}|\widehat{a}(\xi)|^2\,d\xi\\
 &\leq& \Bigl(\int_{\R^3}|\xi_{\rm
 h}|^{-6\al(r)+2\th}|\xi_3|^{-2\th}|\widehat{a}(\xi)|^2\,d\xi\Bigr)^{\frac 2 p }\\
&&\qquad\quad{}\times \Bigl(\int_{\R^3}|\xi_{\rm
 h}|^{-6\al(r)+2\th}|\xi_3|^{-2\th}|\xi_{\rm
 h}|^2|\widehat{a}(\xi)|^2\,d\xi\Bigr)^{1-\frac 2 p }\\
 &\leq& \|a\|_{\cH^{\theta,r}}^{4/p}\|\na_{\rm
 h}a\|_{\cH^{\theta,r}}^{2\left(1-\frac 2 p \right)},
 \eeno
 ensures
 \beno  \bigl|\bigl( v^{3} \partial_3^  2
v^3\,|\,\partial_3v^3\bigr)_{\cH^{\theta,r}}\bigr|\lesssim
\|v^3\|_{\dH^{\frac 1 2   +\frac 2 p }}\|\p_3v^3\|_{\cH^{\theta,r}}^{\frac 2 p }\|
 \na\partial_3v^3\|_{\cH^{\theta,r}}^{\frac 2 {p'}}.\eeno
Due to \eqref{rq.4} and convexity inequality, we thus obtain \beq
\label{estimatedivhdemoeq4p}
\begin{split}
\bigl|\bigl(Q_3(v,v)\,|\,\partial_3v^3\bigr)_{\cH^{\theta,r}}\bigr|\leq
\,&\,\frac 1 {6} \|\nabla \partial_3v^3\|_{\cH^{\theta,r}}^2
 + C \|v^3\|_{\dH^{\frac 1 2   +\frac 2 p }}^p \|\partial_3v^3\|_{\cH^{\theta,r}}^2 \\
&{} + C\|v^3\|_{\dH^{\frac 1 2   +\frac 2 p }} \bigl\|\, \om_{\frac r
2}\bigr\|_{L^2}^{ 2\left(2\al(r)+\frac 1 p \right) }
\bigl\|\nabla \om_{\frac r 2}\bigr\|_{L^2}^{\frac 2 {p'}}\\
&{} + C\|v^3\|_{\dH^{\frac 1 2   +\frac 2 p }}^2 \bigl\|\, \om_{\frac r
2}\bigr\|_{L^2}^{ 4\left(\al(r)+\frac 1 p \right) } \bigl\|\nabla
\om_{\frac r 2}\bigr\|_{L^2}^{2\left(1 -\frac 2 p \right)}.
\end{split}
\eeq

\bigbreak


Now we are in a position to complete the proof of Proposition
\ref{estimadivhaniso}.

\medbreak

\begin{proof}[Conclusion of the proof to Proposition
\ref{estimadivhaniso}] By resuming the Estimates
\eqref{estimatedivhdemoeq1}, \eqref{estimatedivhdemoeq2} and
\eqref{estimatedivhdemoeq4p} into\refeq{b.4pu}, we obtain
\beq\label{b.26}
\begin{split}
\f{d}{dt}&\|\p_3v^3(t)\|_{\cH^{\theta,r}}^2+\|\na\p_3v^3(t)\|_{\cH^{\theta,r}}^2\\
&{}\leq{} C\Bigl(\|v^3\|_{\dH^{\frac 1 2   +\frac 2 p }}\bigl\|\om_{\frac r
2}\bigr\|_{L^2}^{2\left(2\al(r)+\frac 1 p \right)}\bigl\|\na\om_{\frac r
2}\bigr\|_{L^2}^{\frac 2 {p'}}
\\
&\qquad{}+\|v^3\|_{\dH^{\frac 1 2   +\frac 2 p }}^p\|\p_3v^3\|_{\cH^{\theta,r}}^2+\|v^3\|_{\dH^{\frac 1 2   +\frac 2 p }}^2
\bigl\|\om_{\frac r
2}\bigr\|_{L^2}^{4\left(\al(r)+\frac 1 p \right)}\bigl\|\na\om_{\frac r
2}\bigr\|_{L^2}^{2\left(1-\frac 2 p \right)} \Bigr).
\end{split}
\eeq
On the other hand, Inequality\refeq{initialdataHtheta}  cla  ims that
$\|\p_3v_0^3\|_{\cH^{\theta,r}}\lesssim\|v_0\|_{\dH^{1-3\al(r)}}\lesssim
\|\Om_0\|_{L^r} $. Thus Gronwall's inequality allows to conclude
the proof of Proposition \ref{estimadivhaniso}.
\end{proof}

\section{Conclusion  of the proof  of Theorem\refer{thmain}    }
The first main step is the proof of the following proposition.
\begin{prop}
\label{SophisticatedGronwall} {\sl  Let us consider a solution $v$
of~$(NS)$ given by Theorem\refer{thmain}. For any~$p$
in~$\bigl]4,\f{2r}{2-r}\bigr[$ and $\th$ in
 $\bigl]3\al(r)  -\frac 2 p ,\al(r)\bigr[,$ a
constant~$C$ exists such that, for any~$t<T^\ast$, we have \beno
\bigl\|\om_{\frac{r}2}(t)\bigr\|_{L^2}^{2(1+2p\al(r))}+\bigl\|\na\om_{\f{r}2}\bigr\|_{L^2_t(L^2)}^{2(1+2p\al(r))}
 & \leq  & C \|\Om_0\|_{L^{r}}^{r(1+2p\al(r))}\cE(t)\andf\\
 \|\p_3v^3(t)\|_{\cH^{\theta,r}}^2
 +\|\na\p_3v^3\|_{L^2_t(\cH^{\theta,r})}^2
 & \leq & \|\Om_0\|_{L^{r}}^2 \cE(t) \with\\
 \cE(t) & \eqdefa &  \exp\biggl(C\exp
\Bigl(C\int_0^t\|v^3(t')\|_{\dH^{\frac 1 2   +\frac 2 p
}}^p\,dt'\Bigr)\b  iggr). \eeno }
\end{prop}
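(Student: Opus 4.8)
The plan is to run a Gronwall argument coupling Proposition\refer{inegfondvroticity2D3D} with Proposition\refer{estimadivhaniso}, carried out on short time intervals. Set
\[
V(t)\eqdef\int_0^t\|v^3(t')\|_{\dH^{\frac 1 2+\frac 2 p}}^p\,dt'\andf W(t)\eqdef\int_0^t\|\na\p_3v^3(t')\|_{\cH^{\theta,r}}^2\,dt'.
\]
First, $V(t)<\infty$ for $t<T^\star$, since interpolation between $C([0,T^\star[;\dH^{\frac12})$ and $L^2_{\rm loc}([0,T^\star[;\dH^{\frac32})$ gives $\|v^3(t')\|_{\dH^{\frac12+\frac2p}}\le\|v(t')\|_{\dH^{\frac12}}^{1-\frac2p}\|v(t')\|_{\dH^{\frac32}}^{\frac2p}\in L^p([0,t])$. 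Since $|\xi_3|\le|\xi|$ we have $\|\p_3^2v^3\|_{\cH^{\theta,r}}\le\|\na\p_3v^3\|_{\cH^{\theta,r}}$, so, using $\||\om_0|^{\frac r2}\|_{L^2}^2=\|\om_0\|_{L^r}^r\le\|\Om_0\|_{L^r}^r$ and the monotonicity in $t$ of the right-hand side of\refeq{a.10qp}, Proposition\refer{inegfondvroticity2D3D} yields
\[
\sup_{t'\le t}\bigl\|\om_{\frac r2}(t')\bigr\|_{L^2}^2+\int_0^t\bigl\|\na\om_{\frac r2}(t')\bigr\|_{L^2}^2\,dt'\lesssim e^{CV(t)}\Bigl(\|\Om_0\|_{L^r}^r+W(t)^{\frac r2}\Bigr).
\]

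The obstruction to a direct Gronwall argument is that the two integrands on the right of\refeq{b.4bqp} can be controlled only through the bound just displayed, whose right-hand side involves $W(t)$ -- the very quantity that Proposition\refer{estimadivhaniso} estimates. To break this loop I would localise in time: given $t<T^\star$ and $\eta>0$, pick $0=t_0<t_1<\dots<t_M=t$ with $\int_{t_{j-1}}^{t_j}\|v^3\|_{\dH^{\frac12+\frac2p}}^p\,dt'=\eta$ for $j<M$ and $\le\eta$ for $j=M$, so that $M\le V(t)/\eta+1$, and apply the proofs of Propositions\refer{inegfondvroticity2D3D} and\refer{estimadivhaniso} on each $I_j\eqdef[t_{j-1},t_j]$ with initial time $t_{j-1}$. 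Writing $N_j$, $Y_j$, $W_j$ for $\sup_{I_j}\|\om_{\frac r2}\|_{L^2}^2$, $\int_{I_j}\|\na\om_{\frac r2}\|_{L^2}^2$ and $\int_{I_j}\|\na\p_3v^3\|_{\cH^{\theta,r}}^2$, the local form of the bound above reads $N_j+Y_j\le Ce^{C\eta}\bigl(\|\om_{\frac r2}(t_{j-1})\|_{L^2}^2+W_j^{r/2}\bigr)=:G_j$.

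For the two integrands in\refeq{b.4bqp} I would pull out $\|\om_{\frac r2}\|_{L^2}^{2(2\al(r)+1/p)}\le N_j^{2\al(r)+1/p}$ and apply H\"older in time against $\|v^3\|_{\dH^{\frac12+\frac2p}}$, using $\int_{I_j}\|v^3\|^p\le\eta$ and $\int_{I_j}\|\na\om_{\frac r2}\|_{L^2}^2\le Y_j$; since $2\al(r)+1=\frac 2 r$, both integrands are then bounded by $G_j^{2/r}(\eta^{1/p}+\eta^{2/p})$, and because $\frac2r>1$ and $(W_j^{r/2})^{2/r}=W_j$ one has $G_j^{2/r}\lesssim e^{C\eta}\bigl(\|\om_{\frac r2}(t_{j-1})\|_{L^2}^{4/r}+W_j\bigr)$. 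Feeding this into the local version of\refeq{b.4bqp}, the coefficient of $W_j$ it creates carries the factor $e^{C\eta}(\eta^{1/p}+\eta^{2/p})$; fixing $\eta$ small, depending only on $p,r,\theta$ and the constants of the two propositions, makes that coefficient at most $\frac12$, so that $W_j$ can be absorbed on the left, giving -- with a fixed constant $C_1$ --
\[
\bigl\|\p_3v^3(t_j)\bigr\|_{\cH^{\theta,r}}^2+W_j\le C_1\Bigl(\bigl\|\p_3v^3(t_{j-1})\bigr\|_{\cH^{\theta,r}}^2+\bigl\|\om_{\frac r2}(t_{j-1})\bigr\|_{L^2}^{4/r}\Bigr).
\]
Inserting this bound for $W_j$ into $N_j\le G_j$ and using $\frac r2<1$ then gives $\|\om_{\frac r2}(t_j)\|_{L^2}^2\le C_2\bigl(\|\om_{\frac r2}(t_{j-1})\|_{L^2}^2+\|\p_3v^3(t_{j-1})\|_{\cH^{\theta,r}}^r\bigr)$.

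With $u_j\eqdef\|\om_{\frac r2}(t_j)\|_{L^2}^2$ and $w_j\eqdef\|\p_3v^3(t_j)\|_{\cH^{\theta,r}}^r$, raising the previous inequality for $\|\p_3v^3(t_j)\|_{\cH^{\theta,r}}^2$ to the power $\frac r2$ and using $(a+b)^s\le a^s+b^s$ for $0<s<1$ together with $(a+b)^s\lesssim a^s+b^s$ for $s>1$, the two recursions collapse to $u_j+w_j\le C_3(u_{j-1}+w_{j-1})$ with $C_3$ fixed. Since $u_0=\|\om_0\|_{L^r}^r\le\|\Om_0\|_{L^r}^r$ and, by\refeq{initialdataHtheta}, $w_0\le C\|v_0\|_{\dH^{1-3\al(r)}}^r\le C\|\Om_0\|_{L^r}^r$, it follows that $u_M+w_M\le C_3^{M}C\|\Om_0\|_{L^r}^r\lesssim e^{C'V(t)}\|\Om_0\|_{L^r}^r$; summing $Y_j$ and $W_j$ over $j=1,\dots,M$ controls $\int_0^t\|\na\om_{\frac r2}\|_{L^2}^2$ and $W(t)$ in the same way. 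Reading off $\|\p_3v^3(t)\|_{\cH^{\theta,r}}^2=w_M^{2/r}\lesssim\|\Om_0\|_{L^r}^2\cE(t)$ gives the second estimate of the proposition, and raising $\|\om_{\frac r2}(t)\|_{L^2}^2+\|\na\om_{\frac r2}\|_{L^2_t(L^2)}^2\lesssim\|\Om_0\|_{L^r}^r\cE(t)$ to the power $1+2p\al(r)$ gives the first. The main obstacle is precisely this mutual coupling: one must verify that the exponents appearing in\refeq{b.4bqp} are exactly those for which the power of $W$ that reappears after using the $\om$-bound equals $1$, so that a fixed small $\eta$ turns it into an absorbable coefficient -- the cost being an exponential (here single, hence a fortiori at most double) in $V(t)$.
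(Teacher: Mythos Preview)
Your argument is correct and takes a genuinely different route from the paper's. The paper does \emph{not} localise in time: instead it substitutes the bound of Proposition\refer{estimadivhaniso} directly into Proposition\refer{inegfondvroticity2D3D}, applies H\"older to the two forcing integrals so that only the single combination $\int_0^t\|v^3\|^p\,\|\om_{\frac r2}\|_{L^2}^{2(1+2p\al(r))}\,dt'$ survives (together with a harmless fraction of $\int_0^t\|\nabla\om_{\frac r2}\|_{L^2}^2$ that is absorbed on the left), then \emph{raises the whole inequality to the power $1+2p\al(r)$}. This algebraic step makes the inequality linear in $\|\om_{\frac r2}\|_{L^2}^{2(1+2p\al(r))}$, so a single standard Gronwall closes it; the price is the factor $e(T)=Ce^{CV(T)}$ sitting in front of the Gronwall integral, which produces the double exponential $\cE(t)$.

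Your time-slicing scheme avoids this power trick entirely: the key observation you use --- that the exponents in\refeq{b.4bqp} are precisely those for which $(W_j^{r/2})^{2/r}=W_j$ reappears with coefficient $Ce^{C\eta}(\eta^{1/p}+\eta^{2/p})$, hence is absorbable for fixed small $\eta$ --- is exactly right, and the resulting discrete recursion $u_j+w_j\le C_3(u_{j-1}+w_{j-1})$ iterates to $C_3^{M}\lesssim e^{C'V(t)}$. This actually yields a \emph{single} exponential in $V(t)$, sharper than the paper's double exponential; as you note, this a fortiori implies the stated bound. The trade-off is that your argument requires rerunning the proofs of both propositions on subintervals (legitimate, since both are energy estimates) and some recursive bookkeeping, whereas the paper's argument stays global at the cost of a less transparent ``non-standard Gronwall'' step and a weaker final constant.
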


\begin{proof}
The  important point  is the proof of  the following estimate: for any~$t$ in~$[0,T^\star[$, we have
 \beq
 \label{b.28}
 \begin{split}
\bigl\|\om_{\f r
2}(t)\bigr\|_{L^2}^{2\left(1+2p\al(r)\right)}+&\bigl\|\na\om_{\frac
r 2}\bigr\|_{L^2_t(L^2)}^{2\left(1+2p\al(r)\right)}\\
& \leq C \|\Om_0\|_{L^{r}}^{r\left(1+2p\al(r)\right)}
\exp\biggl(C\exp \Bigl(C\int_0^t\|v^3(t')\|_{\dH^{\frac 1 2   +\frac
2 p }}^p\,dt'\Bigr)\biggr).\end{split} \eeq In order to do it, let
us  introduce the notation \beq\label{b.289q} \ds e(T)\eqdefa C\exp
\Bigl(C\int_0^T \|v^3(t)\|_{\dH^{\frac 1 2   +\frac 2 p }}^p
dt\Bigr). \eeq where the constant~$C$ may change from line to line.
As~$(a+b)^{\frac r 2} \sim a^{\frac r 2}+b^{\frac r 2}$,
Proposition\refer{estimadivhaniso} implies that \beq
 \label{SophisticatedGronwalldemoeq2}
\begin{split}
\Bigl(\int_0^t \|\p^2_3v^3  (t')
\|^{2}_{\cH^{\theta,r}}\,dt'\Bigr)^{\frac {r}{2}}e(T) \lesssim e(T)\bigl(\|\Om_0\|_{L^{r}}^{r}+V_1(t)+V_2(t)\bigr)\with\qquad\qquad\\
V_1(t) \eqdefa
 \biggl(\int_0^t\|v^3(t')\|_{\dH^{\frac 1 2   +\frac 2 p }}
\bigl\| \, \om_{\frac r 2}(t')\bigr\|_{L^2}^{2\left(2\al(r)+
\frac 1 p \right)} \bigl\|\na\om_{\frac r
2}(t')\bigr\|_{L^2}^{\frac 2 {p'}}\,dt' \biggr)^{\frac r 2}
\andf \\
V_2(t) \eqdefa \biggl(\int_0^t\|v^3(t')\|_{\dH^{\frac 1 2   +\frac 2 p }}^2 \bigl\|
\, \om_{\frac r 2}(t')\bigr\|_{L^2}^{4\left(\al(r)+\frac 1 p \right)}
\bigl\|\na\om_{\frac r
2}(t')\bigr\|_{L^2}^{2\left(1-\frac 2 p \right)}\,dt' \biggr)^{\frac r
2}.\qquad
\end{split}
\eeq Let us estimate the two terms~$V_j(t), j=1,2$. Applying
H\"older inequality gives
 \beno
  V_1(t)  \leq
\biggl(\int_0^t\|v^3(t')\|_{\dH^{\frac 1 2   +\frac 2 p }}^p \bigl\| \om_{\frac r
2}(t')\bigr\|_{L^2}^{2\left(1+2p\al(r)\right)}\,dt'\biggr)^{\frac r
2 \times\frac1 p} \biggl(\int_0^t\bigl\|\na\om_{\frac r
2}(t')\bigr\|_{L^2}^{2}\,dt' \biggr)^{\frac r
2\left(1-\frac1p\right)}.
 \eeno
 As we have
$$
1-\frac r 2\Bigl(1-\frac1p\Bigr) =
r\Bigl(\al(r)+\f1{2p}\Bigr)=\frac{r\left(1+2p\al(r)\right)}{2p}\,\virgp
$$
convexity inequality implies that, for any~$t$ in~$[0,T]$, \beq
\label{SophisticatedGronwalldemoeq3} \begin{split} e(T) V_1(t) \leq
\frac{r-1}{3r^2} &\int_0^t\bigl\|\na\om_{\frac r
2}(t')\bigr\|_{L^2}^{2}\,dt'\\
&{} + e(T) \biggl(\int_0^t\|v^3(t')\|_{\dH^{\frac 1 2   +\frac 2 p }}^p \bigl\|
\om_{\frac r
2}(t')\bigr\|_{L^2}^{2\left(1+2p\al(r)\right)}\,dt'\biggr)^{\frac
1{1+2p\al(r)}}. \end{split} \eeq Now let us estimate the
term~$V_2(t)$. Applying H\"older inequality yields \beno V_2(t) \leq
\biggl(\int_0^t\|v^3(t')\|_{\dH^{\frac 1 2   +\frac 2 p }}^p \bigl\| \om_{\frac r
2}(t')\bigr\|_{L^2}^{2\left(1+p\al(r)\right)}\,dt'\biggr)^{\frac r
2\times\frac2 p} \biggl(\int_0^t\bigl\|\na\om_{\frac r
2}(t')\bigr\|_{L^2}^{2}\,dt' \biggr)^{\frac r
2\left(1-\frac2p\right)}.
 \eeno
 As we have
$$
1-\frac r 2\left(1-\frac2p\right) = r\left(\al(r)+\f1p\right)=\frac
{r\left(1+p\al(r)\right)}{p}\,\virgp
$$
convexity inequality implies that \beq
\label{SophisticatedGronwalldemoeq4} \begin{split}
 e(T) V_2(t)
 \leq \frac{r-1}{3r^2} &
\int_0^t\bigl\|\na\om_{\frac r 2}(t')\bigr\|_{L^2}^{2}\,dt'\\
& + e(T) \biggl(\int_0^t\|v^3(t')\|_{\dH^{\frac 1 2   +\frac 2 p }}^p \| \om_{\frac
r 2}(t')\|_{L^2}^{2\left(1+p\al(r)\right)}\,dt'\biggr)^{\frac
1{1+p\al(r)}}.\end{split} \eeq
 Let us notice that the power of~$\bigl\|  \om_{\frac r 2}\bigr\|_{L^2}$ here is not the
same as that in Inequality\refeq{SophisticatedGronwalldemoeq3}.
Applying H\"older inequality with
$$
q = \frac {1+2p\al(r)} {1+p\al(r)}
$$
and with the measure~$\|v^3(t')\|_{\d  H^{\frac 1 2   +\frac 2 p }}^p\,dt'$ gives
$$
\longformule{ \biggl(\int_0^t\|v^3(t')\|_{\dH^{\frac 1 2   +\frac 2 p }}^p \bigl\|
\om_{\frac r
2}(t')\bigr\|_{L^2}^{2\left(1+p\al(r)\right)}dt'\biggr)^{\frac
1{1+p\al(r)}} \leq
\biggl(\int_0^t\|v^3(t')\|_{\dH^{\frac 1 2   +\frac 2 p }}^p\,dt'\biggr)^{\bigl(1-\frac
1q\bigr)\times\frac1{1+p\al(r)}} } {{}\times
\biggl(\int_0^t\|v^3(t')\|_{\dH^{\frac 1 2   +\frac 2 p }}^p \bigl\| \om_{\frac r
2}(t')\bigr\|_{L^2}^{2\left(1+2p\al(r)\right)}\,dt'\biggr)^{\frac
1{1+2p\al(r)}}. }
$$
By definition of~$e(T)$, we have
$$
\biggl(\int_0^t\|v^3(t')\|_{\dH^{\frac 1 2   +\frac 2 p }}^p\,dt'\biggr)^{\bigl(1-\frac
1q\bigr)\times\frac1{1+p\al(r)}} e(T)\leq e(T).
$$
Thus we deduce from\refeq{SophisticatedGronwalldemoeq4} that
$$
e(T) V_2(t) \leq \frac{r-1}{3r^2} \int_0^t\bigl\|\na\om_{\frac r
2}(t')\bigr\|_{L^2}^{2}dt' + e(T)
\biggl(\int_0^t\|v^3 (t')\|_{\dH^{\frac 1 2   +\frac 2 p }}^p \bigl\|\om_  {\frac r
2}(t')\bigr\|_{L^2}^{2\left(1+2p\al(r)\right)}\,dt'\biggr)^{\frac
1{1+2p\al(r)}}.
$$
Inserting this inequality and\refeq{SophisticatedGronwalldemoeq3}
in\refeq{SophisticatedGronwalldemoeq2} gives, for any~$t$
in~$[0,T]$,
$$
\longformule{ \Bigl(\int_0^t \|\p^2_3v^3(t')
\|^{2}_{\cH^{\theta,r}}\,dt'\Bigr)^{\frac {r}{2}}e(T) \leq
\frac{2(r-1)}{3r^2}  \int_0^t\bigl\|\na\om_{\frac r
2}(t')\bigr\|_{L^2}^{2}\,dt'+e(T)\|\Om_0\|_{L^{r}}^{r} } {{} + e(T)
\biggl(\int_0^t\|v^3(t')\|_{\dH^{\frac 1 2   +\frac 2 p }}^p \bigl\| \om_{\frac r
2}(t')\bigr\|_{L^2}^{2\left(1+2p\al(r)\right)}\,dt'\biggr)^{\frac
1{1+2p\al(r)}}. }
$$
Hence thanks to Proposition\refer{inegfondvroticity2D3D}, we deduce
that
$$
\longformule{ \frac 1r  \|  \om_{\frac r2}(t)\|_{L^2}^{2}
 +\frac{r-1}{3r^2} \int_0^t\|\nabla \om_{\frac r2}(t')\|_{L^2}^2\,dt' \leq  \|\Om_0\|_{L^{r}}^{r}e(T)
} {{} + e(T) \biggl(\int_0^t  \|v^3(t')\|_{\dH^{\frac 1 2   +\frac 2 p }}^p
\|\om_{\frac
r2}(t')\|_{L^2}^{2\left(1+2p\al(r)\right)}\,dt'\biggr)^{\frac
1{1+2p\al(r)}}. }
$$
Taking the power~$\ds 1+2p\al(r)$ of this inequality  and using that
$$
(a+b)^{ 1+2p\al(r)} \sim a^{1+2p\al(r)}+b^{1+2p\al(r)},
$$
 we obtain for any~$t$ in~$[0,T]$,
$$
\longformule{
 \bigl\|  \om_{\frac r2}(t)\bigr\|^{2\left(1+2p\al(r)\right)}_{L^2}
 +\biggl(\int_0^t\bigl\|\nabla \om_{\frac r2}(t')\bigr\|_{L^2}^2\,dt'\biggr)^{1+2p\al(r)}
  \leq \| \Om_0\|_{L^{r}}^{r\left(1+2p\al(r)\right)}e(T)
 }
{{} + e(T)\int_0^t\|v^3(t')\|_{\dH^{\frac 1 2   +\frac 2 p }}^p \bigl\| \om_{\frac
r2}(t')\bigr\|_{L^2}^{2\left(1+2p\al(r)\right)}\,dt'. }
$$
Then Gronwall lemma leads to Inequality \eqref{b.28}.
On the other hand, it follows from
Proposition\refer{estimadivhaniso}  that, for any $t<T^\ast,$ \beno
&& \|\p_3v^3(t)\|_{\cH^{\theta,r} }^2
 +\int_0^t\|\na\p_3v^3(t')\|_{\cH^{\theta,r}}^2\,d  t'\\
&&\qquad\qquad \leq
 e(t)\biggl(\|\Om_0\|_{L^{r}}^2+\|v^3\|_{L^p_t(\dH^{\frac 1 2   +\frac 2 p })}\bigl\|\om_{\frac r2}\bigr\|_{L^\infty_t(L^2)}^{2\left(2\al(r)+\frac 1 p \right)}
 \bigl\|\na\om_{\frac r2}\bigr\|_{L^2_t(L^2)}^{\frac 2 {p'}}\\
 &&\qquad\qquad\qquad\qquad\qquad\qquad\qquad{}+\|v^3\|_{L^p_t(\dH^{\frac 1 2   +\frac 2 p })}^2\bigl\|\om_{\frac r2}\bigr\|_{L^\infty_t(L^2)}^{4\left(\al(r)+\frac 1 p \right))}
 \bigl\|\na\om_{\frac r2}\bigr\|_{L^2_t(L^2)}^{2\left(1-\frac 2 p \right)}\biggr).
 \eeno
Inserting the Estimate \eqref{b.28} in the above inequality
concludes the proof of Proposition \ref{SophisticatedGronwall}.
\end{proof}

Thus, if we assume that
\beq
\label{k.1vertical}
\int_0^{T^\star}\|v^3(t)\|_{\dH^{\frac 1 2   +\frac 2 p }}^p\,dt<\infty,
\eeq
we know that  the quantities
\beq
\label{eqconclud1}
\|\om\|_{L^\infty([0,T^\star[; L^{r})},\quad
 \int_0^{T^\star}\|\nabla \om_{\f r 2}(t)\|_{L^2}^2\,dt\,,\andf  \int_0^{T^\star} \|\p^2_3v^3(t)
\|^{2}_{\cH^{\theta,r}}\,dt \eeq are finite. We want to prove that
it prevents this solution from blowing up. Let us recall the
following theorem of anisotropic condition for blow up.
\begin{thm}[Theorem 2.1 of \cite{CZ5}]
\label{blowupBesovendpoint}
{\sl
 Let $v$ be a solution of~$(NS)$  in the
space~$C([0,T^\star[;\dH^{\frac 1 2   })\cap L^2_{\rm
loc}([0,T^\star[;H^{3/2})$. If~$T^\star$ is the maximal time of
existence and $T^\ast<\infty,$ then for any~$(p_{k,\ell})$
in~$]1,\infty[^9$, one has
$$
\sum_{1\leq k,\ell\leq3} \int_0^{T^\star} \|  \partial_\ell
v^k(t)\|^{p_{k,\ell}}_{\cB_{p_{k,\ell}}} dt=\infty,
$$
where $\cB_p\eqdefa\dB^{-2+\frac 2 p }_{\infty,\infty}.$}
\end{thm}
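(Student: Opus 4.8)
The plan is to argue by contradiction. Assume $T^\star<\infty$ and that, for some $(p_{k,\ell})$ in $]1,\infty[^9$, the quantity
\[
\cN(t)\eqdef\sum_{1\leq k,\ell\leq 3}\int_0^{t}\bigl\|\partial_\ell v^k(s)\bigr\|^{p_{k,\ell}}_{\cB_{p_{k,\ell}}}\,ds
\]
stays finite as $t\uparrow T^\star$; I will deduce a contradiction with Theorem\refer{fujitakato+}. Fix any $T_0$ in $]0,T^\star[$: since Fujita--Kato solutions are smooth on $]0,T^\star[\times\R^3$, the energy computations below are legitimate on $[T_0,T^\star[$. The target is the \emph{critical} a priori bound
\[
\sup_{T_0\leq t<T^\star}\|v(t)\|_{\dH^{1/2}}^2+\int_{T_0}^{T^\star}\|v(t)\|_{\dH^{3/2}}^2\,dt<\infty .
\]
Because $v$ lies in $L^2_{\rm loc}([0,T^\star[;H^{3/2})$ we have $\int_0^{T_0}\|v(t)\|_{\dH^{3/2}}^2\,dt<\infty$, so this bound would give $\int_0^{T^\star}\|v(t)\|_{\dH^{3/2}}^2\,dt<\infty$. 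But since $T^\star<\infty$, Theorem\refer{fujitakato+} applied with $p=2$ --- that is, \refeq{blowupbasic} --- forces $\int_0^{T^\star}\|v(t)\|_{\dH^{3/2}}^2\,dt=\infty$, a contradiction. Hence no such $(p_{k,\ell})$ can exist, which is the assertion.

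To obtain the critical bound, take the $\dH^{1/2}$ scalar product of the momentum equation of $(NS)$ with $v$. The pressure term drops because $\dv v=0$, and since $\|\na v\|_{\dH^{1/2}}=\|v\|_{\dH^{3/2}}$ one gets
\[
\frac12\,\frac{d}{dt}\|v(t)\|_{\dH^{1/2}}^2+\|v(t)\|_{\dH^{3/2}}^2=-\sum_{1\leq k,\ell\leq 3}\bigl(v^\ell\partial_\ell v^k\,\big|\,v^k\bigr)_{\dH^{1/2}} .
\]
The heart of the matter is the trilinear estimate: for each pair $(k,\ell)$, with $p=p_{k,\ell}$ and $p'$ its conjugate,
\[
\bigl|\bigl(v^\ell\partial_\ell v^k\,\big|\,v^k\bigr)_{\dH^{1/2}}\bigr|\ \lesssim\ \bigl\|\partial_\ell v^k\bigr\|_{\cB_{p}}\,\|v\|_{\dH^{1/2}}^{2/p}\,\|v\|_{\dH^{3/2}}^{2/p'} ,
\]
which, up to the distribution of the $\dH$-regularity among the factors, is the only estimate dictated by the scaling of $(NS)$. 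Granting it, Young's inequality applied to each of the nine terms, with exponents $p_{k,\ell}$ and $p_{k,\ell}'$, absorbs $\|v\|_{\dH^{3/2}}^2$ into the left-hand side and leaves, for $t$ in $[T_0,T^\star[$,
\[
\frac{d}{dt}\|v(t)\|_{\dH^{1/2}}^2+\|v(t)\|_{\dH^{3/2}}^2\ \leq\ C\Bigl(\sum_{1\leq k,\ell\leq 3}\bigl\|\partial_\ell v^k(t)\bigr\|^{p_{k,\ell}}_{\cB_{p_{k,\ell}}}\Bigr)\,\|v(t)\|_{\dH^{1/2}}^2 .
\]
Gronwall's lemma on $[T_0,t]$ then gives $\|v(t)\|_{\dH^{1/2}}^2\leq\|v(T_0)\|_{\dH^{1/2}}^2\exp\bigl(C\,\cN(T^\star)\bigr)$ and, after time integration, the bound on $\int_{T_0}^{T^\star}\|v\|_{\dH^{3/2}}^2\,dt$; both are finite because $\cN(T^\star)<\infty$.

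The remaining --- and main --- obstacle is the trilinear estimate. One expands $v^\ell\partial_\ell v^k$ by Bony's decomposition\refeq{pd} as $T_{v^\ell}\partial_\ell v^k+T_{\partial_\ell v^k}v^\ell+R(v^\ell,\partial_\ell v^k)$. In the last two pieces $\partial_\ell v^k$ carries its derivative, so it can be placed in $\cB_p=\dB^{-2+\frac2p}_{\infty,\infty}$: the relevant bound $\|S_{j-1}\partial_\ell v^k\|_{L^\infty}\lesssim 2^{j(2-\frac2p)}\|\partial_\ell v^k\|_{\cB_p}$ (and likewise $\|\Delta_j\partial_\ell v^k\|_{L^\infty}\lesssim 2^{j(2-\frac2p)}\|\partial_\ell v^k\|_{\cB_p}$ for the remainder) follows from the geometric sum $\sum_{j'\leq j}2^{j'(2-\frac2p)}$, which converges precisely because $p>1$ --- and this is the \emph{only} place where $p>1$ enters, which is exactly what makes the whole range $]1,\infty[$ available. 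Product laws in the $\dH^s$ scale then bound these two contributions by $\|\partial_\ell v^k\|_{\cB_p}$ times Sobolev norms of $v^\ell$ and $v^k$ with indices between $\frac12$ and $\frac32$, which after interpolation reproduce the claimed right-hand side (the split of the regularity between the two $\dH$ factors has to be chosen differently according to whether $p\geq 2$ or $p<2$). The genuinely dangerous piece is $T_{v^\ell}\partial_\ell v^k$, where the undifferentiated factor $v^\ell$ sits at low frequency: handled naively it would require controlling $v$ in $\dB^{1+\frac2p}_{\infty,\infty}$. Here one uses $\dv v=0$ once more, writing $T_{v^\ell}\partial_\ell v^k=\partial_\ell T_{v^\ell}v^k-T_{\partial_\ell v^\ell}v^k$ and noting that, after summation over $\ell$, the second term disappears since $\sum_\ell T_{\partial_\ell v^\ell}v^k=T_{\dv v}v^k=0$, while the first is a perfect $\partial_\ell$-derivative which, integrated by parts against $v^k$ in the $\dH^{1/2}$ pairing, reduces to a paraproduct-plus-commutator expression controlled only by admissible norms of $v$. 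Arranging this bookkeeping so that everything closes at the level of $\dH^{1/2}$ for \emph{every} $p$ in $]1,\infty[$ is the technical core, carried out in\ccite{CZ5}.
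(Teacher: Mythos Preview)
The paper does not prove this theorem: it is merely \emph{recalled} from\ccite{CZ5} (``Let us recall the following theorem of anisotropic condition for blow up'') and then used as a black box in the proof of Theorem\refer{thmain}. There is therefore no proof in the present paper to compare your proposal against.

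That said, your sketch is the correct strategy and is essentially the one carried out in\ccite{CZ5}: a $\dH^{1/2}$ energy estimate, the scaling-dictated trilinear bound
\[
\bigl|\bigl(v^\ell\partial_\ell v^k\,\big|\,v^k\bigr)_{\dH^{1/2}}\bigr|\ \lesssim\ \bigl\|\partial_\ell v^k\bigr\|_{\cB_{p_{k,\ell}}}\,\|v\|_{\dH^{1/2}}^{2/p_{k,\ell}}\,\|v\|_{\dH^{3/2}}^{2/p_{k,\ell}'},
\]
then Young and Gronwall, and finally the contradiction with\refeq{blowupbasic} at $p=2$. Your handling of the Bony pieces is right in spirit: placing $\partial_\ell v^k$ in $\cB_p$ in $T_{\partial_\ell v^k}v^\ell$ and in $R$ via the convergent geometric sum (this is indeed where $p>1$ enters), and rewriting $\sum_\ell T_{v^\ell}\partial_\ell v^k=\sum_\ell\partial_\ell T_{v^\ell}v^k$ thanks to $\dv v=0$ before integrating by parts. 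You yourself acknowledge that closing the last piece uniformly over $p\in\,]1,\infty[$ is ``the technical core, carried out in\ccite{CZ5}''; since that is precisely where the full argument lives, your proposal is an accurate outline rather than a self-contained proof, and matches the approach of the reference the paper cites.
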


Now let us present the proof of Theorem \refer{thmain}.

\begin{proof}[Proof of  Theorem \refer{thmain}]
We first deduce  from Lemma \ref{lemBern} that
 \beno
\max_{1\leq\ell\leq 3} \|\p_\ell v^3\|_{\cB_p}\lesssim
\sup_{j\in\Z}2^{j\left(-1+\frac 2 p \right)}\|\D_jv^3\|_{L^\infty}\lesssim
\sup_{j\in\Z}2^{j\left(\frac 1 2   +\frac 2 p \right)}\|\D_jv^3\|_{L^2}\lesssim
\|v^3\|_{\dH^{\frac 1 2   +\frac 2 p }},
\eeno
which ensures that
\beq
 \label{b.29}
 \max_{1\leq\ell\leq3}\int_0^{T^\star}\|\p_\ell v^3(t)\|_{\cB_p}^p\,dt\lesssim
\int_0^{T^\star}\|v^3(t)\|_{\dH^{\frac 1 2   +\frac 2 p }}^p\,dt<\infty.
 \eeq
As we have
$$
\|\partial_{\rm h}^2\D_{\rm
h}^{-1}\p_3 v^3(t)\|_{\cB_p} \lesssim
\|\partial_{\rm h}^2\D_{\rm h}^{-1}\p_3 v^3(t)\|_{\dot H^{-\frac 12+\frac 2p}},
$$
so that for~$v^{\rm h}_{\rm div} =-\nablah\D_{\rm h}^{-1}
\partial_3v^3,$ there holds
 \beq
 \label{b.30}
  \int_0^{T^  \ast}\|\na_{\rm h} v^{\rm h}_{\rm div}(t)\|_{\cB_p}^p\,dt\lesssim
\int_0^{T^\ast}\|v^3(t)\|_{\dH^{\frac 1 2   +\frac 2 p }}^p\,dt<\infty.
 \eeq
 The other components of the matrix~$\nabla v$ can been estimated with norm which are not of scaling~ $0$,
  namely  norms related to ~$L^r$ regularity of the horizontal
  vorticity~$\om$. To proceed further, we get, by
using Lemma\refer{lemBern}, that \beno
\|\D_ja\|_{L^\infty}&\lesssim& \sum_{\substack{k\leq j+1\\\ell\leq
j+1}} 2^k2^{\f{\ell}2}\|\D_k^{\rm h}\D_{\ell}^{\rm
v}a\|_{L^2}\\
&\lesssim& \|a\|_{\dot
H^{1-3\alpha(r)+\theta,-\theta}}\sum_{\substack{k\leq j+1\\\ell\leq
j+1}}2^{k(3\al(r)-\th)}2^{\ell\bigl(\f12+\th\bigr)}\\
&\lesssim& 2^{j\bigl(\f12+3\al(r)\bigr)}\|a\|_{\dot
H^{1-3\alpha(r)+\theta,-\theta}}, \eeno
 because~$\ds
 -\frac 12-3\alpha(r) = -2+\frac 3 {r'}\,\virgp$ this leads to
 \beq
\label{eqconclud2}
\|a\|_{\cB_{\frac {2r'} 3}} \lesssim \|a\|_{\dot H^{1-3\alpha(r)+\theta,-\theta}}.
 \eeq
 Let us define~$\ds q(r)\eqdefa \frac {2r'} 3$. As~$r$ belongs to~$\ds ]3/2, 2[$,~$q(r)$ is in~$\ds ]4/3, 2[$
 and thus is less than ~$2$. Observing that
 $$
\|\partial_3v^{\rm h}_{\rm div}\|_{\dot H^{1-3\alpha(r)+\theta,-\theta}} =  \|\nabla_{\rm h} \Delta_{\rm h} ^{-1} \partial_3^2 v^3\|_{\dot H^{1-3\alpha(r)+\theta,-\theta}} \lesssim
\|\partial_3^2 v^3\|_{\cH^{\theta,r} },
 $$
 then applying Inequality\refeq{eqconclud2} and H\"older inequality, we deduce that
  \beq
 \label{b.31}
  \int_0^{T^\ast}\|\partial_3 v^{\rm h}_{\rm div}(t)\|_{\cB_{q(r)}}^{q(r)}\,dt\lesssim
{T^{\star}}^{\left( 1-\frac {q(r)} 2\right)} \Bigl(\int_0^{T^\ast}\|\partial_3^2v^3(t)\|_{\cH^{\theta,r}}^2\,dt
\Bigr)^{\frac {q(r)}2}<\infty\,.
 \eeq
 Let us admit for a while that
\beq
  \label{eqconclud3}
 \|\nabla v^{\rm h }_{\rm curl}(t)   \|_{\cB_{q(r)} }\lesssim \|\nabla \om(t)\|_{L^r}.
 \eeq
 Lemma\refer{BiotSavartomega} implies  that
 $$
  \|\nabla \om(t) \|_{L^{r}} \lesssim
  \|\om_{\frac{r}2}\|_{L^\infty([0,T^\star;L^2)}^{\frac 2 r-1} \bigl\|\nabla \om_{\frac{r}2}(t)\bigr\|_{L^2}\,.
 $$
 Then H\"older inequality implies that
 $$
   \int_0^{T^\ast}\|\nabla v^{\rm h}_{\rm curl}(t)\|_{\cB_{q(r)}}^{q(r)}\,dt\lesssim
{T^{\star}}^{\left( 1-\frac {q(r)} 2\right)}
 \|\om_{\frac{r}2}\|_{L^\infty([0,T^\star[;L^2)}^{\frac 2 r-1}
 \Bigl(\int_0^{T^\ast}\|\nabla \om_{\frac{r}2}(t)\bigr\|_{L^2}^2\,dt \Bigr)^{\frac {q(r)}2}<\infty\,.
 $$
Together with Inequalities\refeq{b.29},\refeq{b.30} and\refeq{b.31},
this concludes the proof of Theorem\refer{thmain} provided we prove
the Estimate\refeq{eqconclud3}.

\medbreak
Let us start with the term~$\nabla_{\rm h} v^{\rm h}_{\rm curl}$.  Dual Sobolev embedding implies that
$$
\|\om\|_{\dot H^{1-3\alpha(r)}}
\lesssim \|\nabla \om\|_{\dot H^{-3\alpha(r)}}
\lesssim \|\nabla \om\|_{L^r}.
$$
As~$ \nabla_{\rm h} v^{\rm h}_{\rm curl}= \partial_{\rm
h}^2\Delta_{\rm h}^{-1} \om$, we get, by using Lemma \ref{isoaniso}
and \refeq{eqconclud2}, that \beq \label{eqconclud4}
 \|\nabla_{\rm h} v^{\rm h}_{\rm curl}\|_{\cB_{q(r)}} \lesssim
 \| \partial_{\rm h}^2\Delta_{\rm h}^{-1} \om\|_{\dot H^{1-3\alpha(r)}}\lesssim \|\nabla \om\|_{L^r}\,.
\eeq
 The term~$\partial_3 v^{\rm h}_{\rm curl}$ is treated as follows. Let us write that
 $$
 \D_j\partial_3 v^{\rm h}_{\rm curl}  = \sum_{\substack{k\leq j+1\\\ell\leq j+1}}
 \D_j\D_k^{\rm h}\D_\ell^{\rm v} \partial_3\nabla_{\rm h}^\perp\Delta_{\rm h}^{-1} \om\,.
 $$
 Using Lemma\refer{lemBern}, we can wri  te
\beno
 2^{-j\left ( \frac 3 r-1\right)}\|\D_j\partial_3 v^{\rm h}_{\rm curl} \|_{L^\infty}
 & \lesssim &   2^{-j\left ( \frac 3 r-1\right)}
\sum_{\substack{k\leq j+1\\\ell\leq j+1}}
\bigl\| \D_j\D_k^{\rm h}\D_\ell^{\rm v} \partial_3\nabla_{\rm h}^\perp\Delta_{\rm h}^{-1} \om\bigr\|_{L^\infty}\\
& \lesssim &
 \|\partial_3\om\|_{L^r}  2^{-j\left ( \frac 3 r-1\right)} \sum_{\substack{k\leq j+1\\\ell\leq j+1}} 2^{k\left (\frac 2 r-1\right)} 2^{\frac \ell r} \\
& \lesssim &  \|\partial_3\om\|_{L^r} \,. \eeno
 This concludes the proof of \refeq{eqconclud3} and hence also the proof of  Theorem\refer{thmain} . \end{proof}

\bigbreak \noindent {\bf Acknowledgments.} Part of this work was
done when J.-Y. Chemin was visiting Morningside Center of the
Academy of Mathematics and Systems Sciences, CAS. We appreciate the
hospitality and the financial support from MCM and National Center
for Mathematics and Interdisciplinary Sciences. P. Zhang is
partially supported by NSF of China under Grant  11371347, the
fellowship from Chinese Academy of Sciences and innovation grant
from National Center for Mathematics and Interdisciplinary Sciences.
Z. Zhang is partially supported by NSF of China under Grant
11371037 and 11425103, Program for New Century Excellent Talents in University
and Fok Ying Tung Education Foundation.
\medskip

\end{document}